\newtheorem{theorem}{Theorem}[section]
\newtheorem{lemma}[subsection]{Lemma}
\newtheorem{corollary}[theorem]{Corollary}
\newtheorem{remark}[theorem]{Remark}
\newtheorem{definition}[theorem]{Definition}
\newtheorem{example}[theorem]{Example}
\newtheorem{proposition}{Proposition}
\begin{document}
	
	\numberwithin{equation}{section}

	\title{Mass and topology of a static stellar model }
	
	\author{Maria Andrade$^1$, Benedito Leandro$^2$ and Thamara Policarpo$^3$}
	\footnotetext[1]{Universidade Federal de Sergipe, DMA,
		s/n, CEP 49100-000, S\~ao Crist\'ov\~ao, SE, Brazil. \textsf{maria@mat.ufs.br }}
	\footnotetext[23]{Universidade Federal de Goi\'as, IME,
		131, CEP 74001-970, Goi\^ania, GO, Brazil.}
\footnotetext[2]{Benedito Leandro was partially supported by CNPq/Brazil Grant 303157/2022-4. \textsf{bleandroneto@gmail.com}}
	\footnotetext[3]{Thamara Policarpo was partially supported by PROPG-CAPES [Finance Code 001]. \textsf{thamaraufg@discente.ufg.br}}
	\footnotetext[123]{The authors were partially supported by CNPq Grant 403349/2021-4.}
	
	\date{}

	\maketitle{}
	
	\begin{abstract}
		This study investigates the topological implications arising from stable (free boundary) minimal surfaces in a static perfect fluid space while ensuring that the fluid satisfies certain energy conditions. Based on the main findings, it has been established the topology of the level set $\{f=c\}$ (the boundary of a stellar model), where $c$ is a positive constant and $f$ is the static potential of a static perfect fluid space. We prove a non-existence result of stable free boundary minimal surfaces in a static perfect fluid space. An upper bound for the Hawking mass for the level set $\{f=c\}$ in a non-compact static perfect fluid space was derived, and the positivity of Hawking mass is provided in the compact case when the boundary $\{f=c\}$ is a topological sphere. We dedicate a section to revisit the Tolman-Oppenheimer-Volkoff solution, an important procedure for producing static stellar models. We will present a new static stellar model inspired by Witten's black hole (or Hamilton's cigar).
	\end{abstract}
	
	\vspace{0.2cm} \noindent \emph{2020 Mathematics Subject
		Classification} : 53C21, 53C23, 83C05.
	
	\vspace{0.4cm}\noindent \emph{Keywords}: static space, perfect fluid, vacuum, minimal surface, free-boundary.
	

	\section{Introduction}
	The Einstein equation with perfect fluid as a matter field of a static space-time metric $\hat{g}=-f^{2}dt^{2}+g$ is given by
	\begin{eqnarray*}\label{1}
		(Ric_{\hat{g}})_{ij}-\frac{R_{\hat{g}}}{2}\hat{g}_{ij}=T_{ij},
	\end{eqnarray*} 
{where $T_{ij}=(\mu+\rho)u_iu_j+\rho\hat{g}_{ij}$ is the energy-momentum stress tensor of a perfect fluid, $Ric_{\hat{g}}$ and $R_{\hat{g}}$ are the Ricci tensor and the scalar curvature for the metric $\hat{g},$ respectively.}
  Moreover, the smooth functions $\mu$ and $\rho$ are known as the {\it energy density} and {\it pressure} of the perfect fluid, respectively. Here, $u_i$ is {an} unit timeline vector field. Understanding a specific space-time metric is crucial in both physics and mathematics. Examples of static perfect fluid space-times are in \cite{barboza2018}.
	
Static space-times are significant as solutions to the Einstein equations in general relativity. Perhaps one of the most important solutions of this kind is the Schwarzschild space-time. This solution represents a static gravitational isolated system but is commonly recognized as a static model for a black hole. Mathematically, the Schwarzschild solution can be saw as a solution for 
	\begin{eqnarray*}
		Ric_{\hat{g}}=0. 
	\end{eqnarray*}

	Based on findings in astronomy, it seems that the universe can be described as a space-time that houses a perfect fluid. Galaxies can be seen as ``molecules'' in a smoothed and averaged form. The galactic fluid density is mainly contributed by the mass of the galaxies, with a small pressure from radiation. It is referred to as a ``perfect" fluid because it lacks heat conduction terms and ``stress" terms related to viscosity, as explained in \cite[p. 341]{oneill1983}. The study of static perfect fluid spaces holds great importance in physics.
	
	From the warped product structure of the static metric, we can define a static perfect fluid space as follows. 
	
	\begin{definition}\label{def1}
		A Riemannian manifold $(M^n,\,g,\,f,\,\rho)$ is said to be a static perfect fluid space if there exist smooth functions $f>0$ and $\rho,\,\mu$ on $M^n$ satisfying the perfect fluid equations:
		
		\begin{equation}\label{eqstfp}
			fRic=\nabla^2 f+\frac{(\mu-\rho)}{(n-1)}fg
		\end{equation}
		and
		\begin{equation}\label{eq2}
			\Delta f=\dfrac{[(n-2)\mu+n\rho]}{(n-1)}f,
		\end{equation}
		where ${Ric}$ and ${\nabla^2}$ stand for the Ricci and Hessian tensors,  respectively. Moreover, $\Delta$ and $R$ are the Laplacian and the scalar curvature for $g$. Moreover, the scalar curvature of $g$ and the density function satisfies the identity  $$\mu=\frac{R}{2}.$$ 
	\end{definition}
	Based on the above definition, we can infer that 
	\begin{eqnarray}\label{traceless}
		f\mathring{R}ic=\mathring{\nabla}^2f,
	\end{eqnarray}
	where {$\mathring{A}$ is the traceless part of a symmetric two-tensor $A$, i.e., $\mathring{A}=A-\frac{1}{n}\text{trace}(A)g.$} The above structure can be useful in analyzing static perfect fluid equations; see \cite{coutinho}. Different energy conditions suit various situations when studying static perfect fluid spaces. Some of the most relevant ones are the following (see in \cite[p. 175]{carrol} and \cite[p. 219]{wald}):
	\begin{itemize}
		\item The Weak Energy Condition or WEC states that $\mu \ge 0$ and $\mu+\rho \ge0$. 
		\item The Null Energy Condition or NEC states that $\mu+\rho \ge0$. 
		\item The Dominant Energy Condition or DEC  states that  $\mu \ge |\rho| $. 
	\end{itemize}

	{It is well known that t}here exists an unsolved conjecture called the {\it fluid ball conjecture} concerning static perfect fluid spaces. It proposes that non-rotating stellar models are spherically symmetric. A stellar model can be considered a solution to the equations for a static perfect fluid space (Definition \ref{def1}). However, proving the spherical symmetry of such spaces is a difficult task. It involves a range of conjectures that depend on various factors, such as the equation of state, the extent of the fluid region, the asymptotic assumptions, and the topology of the space-time (cf. \cite{coutinho2021,massod1} and the references therein). Usually, in the fluid region $\rho>0$, and in the exterior vacuum $\mu=\rho=0$. The boundary $\partial M=\{f=c\},$ where $0\leq c<1,$ of the interior fluid region and vacuum is a level surface of $f$. It is well-known that any conformally flat static perfect fluid space must be spherically symmetric. However, we encounter difficulties classifying solutions due to the traceless attribute of the static perfect fluid equation \eqref{traceless}. As seen in \cite{barboza2018}, an infinite number of solutions exist for the conformally flat static perfect fluid space. We will discuss the classical TOV solutions in Section \ref{tov}. Hence, gaining a better understanding of the geometry and topology of such a space satisfying Definition \ref{def1} is crucial.

	Mathematically speaking, the equations for a static perfect fluid offer great flexibility, thanks to the functions $\mu$ and $\rho$. For instance, an interesting choice is $\mu+\rho=0$ everywhere on $M.$ In this case, we will obtain $R=2\mu$ constant (cf. Lemma \ref{lmasood_O}),
	\begin{eqnarray}\label{staticnonnullcosm}
	fRic=\nabla^{2}f+\frac{Rf}{n-1}g\quad\mbox{and}\quad\Delta f+\frac{Rf}{n-1}=0.	\end{eqnarray}
	The above system represents the static vacuum equations with a non-null cosmological constant $\Lambda=\frac{R}{(n-1)}$ (cf. \cite{ambrozio2017}). Classical exact solutions for the above system are the de Sitter space, the Nariai space, and the Schwarzschild-de Sitter space. The topology of three-dimensional static spaces $(M^3,\,g)$ satisfying \eqref{staticnonnullcosm} can be
	studied using area-minimizing surfaces that can be produced in mean-convex manifolds by direct variational methods. In \cite{ambrozio2017}, the author proved that locally area-minimizing
	surfaces exist in $M\backslash\partial M$ only in exceptional cases. Also, in \cite{massod2}, the author used the theory of stable minimal surfaces to obtain topological information about static perfect fluid spaces. 
	
	It is known that if $\mu=\rho=0$, then we return to a static vacuum Einstein space with a null cosmological constant. That is, $(M^n,\,g,\,f)$ must satisfy
 \begin{eqnarray}\label{vac}
    fRic=\nabla^{2}f\quad\mbox{and}\quad\Delta f=0.
 \end{eqnarray}
For the static vacuum Einstein space-time, the static potential $f$ must be zero at the boundary $\partial M$ (cf. Theorem 1 in \cite{huang2018}). The topology and geometric properties of $\partial M$ were widely explored in the literature because it is closely related to the event horizon, i.e., the boundary of a static black hole \cite{galloway1993}. It is well-known that $\{f=0\}$ is a minimal hypersurface in the spatial factor of a static vacuum Einstein space-time (cf. \cite{coutinho}).

	An extension of the concept of an event horizon is the so-called photon surface, which can be understood as photons moving in spirals around a central black hole or naked singularity at a fixed distance. The photon surface is also significant for understanding trapped null geodesics and ensuring dynamical stability in the context of the Einstein equations. Furthermore, the presence of photon surfaces is connected to the occurrence of relativistic images in the gravitational lensing scenario. See \cite{cederbaum2021} and the references therein. We will formulate the following definition by building upon the research conducted by Cederbaum and Galloway; see \cite[Definition 2.5 and Proposition 2.8]{cederbaum2021}. We will see that the following definition is closely related to the boundary of a fluid region for a static stellar model, which is the level sets of lapse function $f$. We say that $\Sigma^{n-1}\subset M^n$ is a {photon surface} of a static perfect fluid space $(M^n,\,g,\,f,\,\rho)$ if the level set $\{f=c\}\subseteq\Sigma$, $c\in(0,\,1)$ is a regular value of $f$ and $\partial M$ is totally umbilical surface concerning the induced metric.

	A static perfect fluid space-time is often used as a static stellar model. A static stellar model is expected to be spherically symmetric (i.e., locally conformally flat); see \cite{kunzle,massod1}. Moreover, we can see from \cite[Proposition 1]{leandro2019} that for any locally conformally flat, static perfect fluid space, $\{f=c\}$ must be totally umbilical and have constant mean curvature. So, the concept of photon surface comes naturally from the structure of a perfect fluid space.
	
	


	Our focus lies in revisiting the consequences of the existence of stable minimal (and constant mean curvature) surfaces immersed in a static perfect fluid space. In this work, we will conduct this study with a different approach, not assuming any asymptotic condition which is usually considered. Moreover, inspired by \cite{ambrozio2015} and \cite{montezuma2021}, we will explore the consequences of free boundary minimal surfaces on such static spaces. {Here, in this work, we will suppose that the boundary of a manifold is connected.}

 As we have said before, the boundary of the interior fluid region and vacuum is a level surface of $f$ (see \cite[p. 57]{massod1}). In the following result, we prove that if $(M^n,\,g,\,f,\,\rho)$ is a static perfect fluid and we have a closed, connected, minimal, stable hypersurface in this space, then $f$ must be constant in this hypersurface provided that an equation of state holds.

	\begin{theorem}\label{lemmaf}
		Let $(M^{n},\,g,\,f,\,\rho)$ be an $n$-dimensional static perfect fluid space. Let $\Sigma$ be a  closed, connected, stable minimal hypersurface in $M$. Suppose that one of the following situations occurs:
		\begin{enumerate}
			\item $f$ is zero on $\Sigma$ and $\mu\geq0$ and $\rho\geq0$.
			\item $f>0$ ($f<0$) on $\Sigma$ and $\mu+\rho\geq0$.
		\end{enumerate}
		Then, $\Sigma$ is totally geodesic. Moreover, if the second statement holds and $n=3$, $f$ is constant at $\Sigma$ if, and only if, 
  \begin{eqnarray*}
  2\pi\mathfrak{X}(\Sigma) +\int_{\Sigma}\rho = 0.
		\end{eqnarray*}
	 Here, $\mathfrak{X}(\Sigma)$ stands for the Euler number of $\Sigma$.
	\end{theorem}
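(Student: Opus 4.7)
The main tool is the stability inequality for the closed minimal hypersurface $\Sigma$: for every smooth $\phi$ on $\Sigma$,
\begin{equation*}
\int_\Sigma \bigl(|A|^2 + \mathrm{Ric}(\nu,\nu)\bigr)\phi^2 \leq \int_\Sigma |\nabla_\Sigma \phi|^2.
\end{equation*}
The preliminary step is to rewrite $\mathrm{Ric}(\nu,\nu)$ using the static perfect fluid equations. Since $H=0$ on $\Sigma$, one has $\Delta_\Sigma f = \Delta_M f - \nabla^2 f(\nu,\nu)$; combining this with \eqref{eqstfp} evaluated at $(\nu,\nu)$ and \eqref{eq2}, a short algebraic simplification produces the on-$\Sigma$ identity
\begin{equation*}
f\,\mathrm{Ric}(\nu,\nu) = (\mu+\rho)\,f - \Delta_\Sigma f.
\end{equation*}

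In Case~(2), where $f>0$, I would plug $\phi=f$ into the stability inequality. After multiplying the Ricci identity by $f$ and integrating $-\int_\Sigma f\Delta_\Sigma f = \int_\Sigma |\nabla_\Sigma f|^2$ by parts, the two gradient-squared terms cancel and we are left with
\begin{equation*}
\int_\Sigma |A|^2 f^2 + \int_\Sigma (\mu+\rho)\,f^2 \leq 0.
\end{equation*}
Both integrands are nonnegative under $\mu+\rho\geq 0$ and $f>0$, so each vanishes; hence $|A|\equiv 0$ on $\Sigma$ (so $\Sigma$ is totally geodesic) and $\mu+\rho\equiv 0$ on $\Sigma$.

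In Case~(1), where $f=0$ on $\Sigma$, equation \eqref{eqstfp} evaluated on $\Sigma$ immediately gives $\nabla^2 f \equiv 0$ there. The standard hypersurface identity $\nabla^2_M f(X,Y) = \nabla^2_\Sigma(f|_\Sigma)(X,Y) - h(X,Y)\nu(f)$ for tangent $X,Y$, combined with $f|_\Sigma = 0$, forces $\nu(f)\,h \equiv 0$; similarly $\nabla^2_M f(X,\nu) = X(\nu(f))$ shows that $\nu(f)$ is constant along the connected $\Sigma$. If $\nu(f)\neq 0$ then $h\equiv 0$. The degenerate alternative $\nu(f)\equiv 0$ together with $f|_\Sigma=0$ would force $f$ to vanish in a neighborhood of $\Sigma$ by Aronszajn-type unique continuation in the linear elliptic equation \eqref{eq2}, which is incompatible with the strict positivity of $f$ in the fluid region guaranteed by $\mu,\rho\geq 0$.

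For the final assertion ($n=3$, Case~(2)), using $|A|=0$ and $\mu+\rho=0$ the Ricci identity reduces to $\mathrm{Ric}(\nu,\nu) = -\Delta_\Sigma f/f$. The three-dimensional Gauss equation for a totally geodesic surface, together with $R=2\mu$, yields $K_\Sigma = \mu - \mathrm{Ric}(\nu,\nu) = -\rho + \Delta_\Sigma f/f$. Integrating, invoking Gauss--Bonnet, and using the closed-manifold identity $\int_\Sigma \Delta_\Sigma f/f = \int_\Sigma |\nabla_\Sigma f|^2/f^2$ (which follows from $\int_\Sigma \Delta_\Sigma \ln f = 0$), one arrives at
\begin{equation*}
2\pi\,\mathfrak{X}(\Sigma) + \int_\Sigma \rho = \int_\Sigma \frac{|\nabla_\Sigma f|^2}{f^2} \geq 0,
\end{equation*}
with equality if and only if $\nabla_\Sigma f\equiv 0$, equivalently, $f$ is constant on the connected $\Sigma$. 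The main technical obstacle I anticipate lies in the degenerate subcase of Case~(1), where $\nu(f)$ could a priori vanish identically; excluding it rigorously is what forces one to combine the energy hypotheses $\mu,\rho\geq 0$ with a Hopf boundary lemma or a unique-continuation argument for the elliptic equation \eqref{eq2}.
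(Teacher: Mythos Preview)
Your proof is correct and follows essentially the same route as the paper: for Case~(2) and the $n=3$ characterization you test with $\phi=f$ in the stability inequality and then combine the Gauss equation with Gauss--Bonnet, exactly as the paper does. The only minor deviation is in Case~(1), where the paper cites an external Hopf-type lemma to obtain $\nabla f\neq 0$ on $\{f=0\}$ (and then reads off $A_{ab}=|\nabla f|^{-1}\nabla_a\nabla_b f=0$ directly), while you achieve the same end via unique continuation---an argument that, incidentally, does not actually require the sign conditions $\mu,\rho\geq 0$, so your parenthetical justification there is superfluous.
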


	In what follows, we will prove a result showing the rigidity of a static perfect fluid space metric using the theory of stable minimal hypersurfaces. The theorem below is a generalization of \cite[Proposition 5]{huang2018}; see also \cite[Proposition 14]{ambrozio2017}.
	
	\begin{theorem}\label{propositionsplitting}
		Let $(M^{n},\,g,\,f,\,\rho)$, $n\geq3$, be an $n$-dimensional static perfect fluid space in which $\mu\geq0$ and $\rho\geq0$.  Let $\Sigma^{n-1}$ be a locally area-minimizing,  closed, connected hypersurface in $M$. Suppose $f>0$ on $\Sigma$. Then there is a subset $U$ of $M$ where $g$ is Ricci-flat and $\mu=\rho=0$.
	\end{theorem}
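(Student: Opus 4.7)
The plan is as follows. Since $\Sigma$ is locally area-minimizing, it is in particular stable and minimal, so the hypotheses $f>0$ on $\Sigma$ and $\mu+\rho\geq 0$ (automatic from $\mu,\rho\geq 0$) place us in case (2) of Theorem \ref{lemmaf}. Hence $\Sigma$ is totally geodesic; in particular $H=0$ and $|II|^2=0$, which will be used throughout.

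Next, I would use $f|_\Sigma>0$ as a test function in the stability inequality. From \eqref{eqstfp} one has $\nabla^2 f(N,N)=f\,Ric(N,N)-\tfrac{\mu-\rho}{n-1}f$; combining this with the trace equation \eqref{eq2} and the identity $\Delta_\Sigma f=\Delta f-\nabla^2 f(N,N)-H\partial_N f$ with $H=0$, a short computation gives the pointwise relation
\begin{equation*}
\Delta_\Sigma f + Ric(N,N)\, f = (\mu+\rho)\, f \quad\text{on }\Sigma.
\end{equation*}
Multiplying by $f$ and integrating by parts over the closed hypersurface $\Sigma$ leads to
\begin{equation*}
\int_\Sigma Ric(N,N)\, f^2 \;=\; \int_\Sigma|\nabla^\Sigma f|^2 + \int_\Sigma (\mu+\rho)\, f^2 .
\end{equation*}
Substituting into the stability inequality $\int_\Sigma|\nabla^\Sigma f|^2\geq\int_\Sigma Ric(N,N)\, f^2$ (where $|II|^2=0$ was used) and cancelling the gradient terms, one is left with $0\geq\int_\Sigma(\mu+\rho)f^2$. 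Since $f>0$ on $\Sigma$ and $\mu,\rho\geq 0$, this forces $\mu\equiv\rho\equiv 0$ on $\Sigma$, and all the inequalities above are equalities, so $f|_\Sigma$ is a positive Jacobi field of the Jacobi operator $L=\Delta_\Sigma+Ric(N,N)$.

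The main obstacle is upgrading this pointwise information on $\Sigma$ to an open set. Here I would exploit that $\Sigma$ is locally area-minimizing, strictly stronger than stable: the positive Jacobi field just produced, combined with the implicit function theorem applied to the mean curvature equation, produces a smooth foliation $\{\Sigma_t\}_{t\in(-\epsilon,\epsilon)}$ of a neighborhood $U\supset\Sigma_0=\Sigma$ by closed minimal hypersurfaces, each itself locally area-minimizing by a first-variation/comparison argument based on $|\Sigma_t|=|\Sigma_0|$ (cf.\ the splitting arguments in \cite{ambrozio2017, huang2018}). By continuity $f>0$ on each $\Sigma_t$, so re-running the second paragraph on every leaf yields $\mu\equiv\rho\equiv 0$ throughout $U$ and every leaf totally geodesic; the standard Gauss--Weingarten computation in arc-length coordinates along the normal geodesics then identifies $(U,g)$ with the Riemannian product $\bigl((-\epsilon,\epsilon)\times\Sigma,\,dt^2+g_\Sigma\bigr)$. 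On $U$ the system \eqref{eqstfp}--\eqref{eq2} reduces to the static vacuum equations $f\,Ric=\nabla^2 f$ and $\Delta f=0$. In the product coordinates, $Ric(\partial_t,\cdot)\equiv 0$ combined with $\nabla^2 f=f\,Ric$ forces $f$ to be affine in $t$ with slope constant along $\Sigma$, i.e.\ $f(t,x)=ct+\alpha(x)$; the tangential block $f\,Ric_\Sigma=\nabla^2_\Sigma\alpha$ then, after differentiating in $t$ (if $c\neq 0$) or combining with $\Delta f=0$ on the closed leaf (if $c=0$), yields $Ric_\Sigma\equiv 0$ and $\nabla^2_\Sigma\alpha\equiv 0$. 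Therefore $\nabla^2 f\equiv 0$ and $Ric\equiv 0$ on $U$, completing the proof.
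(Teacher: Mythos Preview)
Your first two paragraphs are correct. The paper obtains the same infinitesimal rigidity but packages it differently: rather than invoking Theorem~\ref{lemmaf} and the stability inequality, it uses Lemma~\ref{lemma:monotonicity} (the normal exponential flow for the conformal metric $f^{-2}g$, with $\partial_t\Phi=-fN$) to build the foliation $\{\Sigma_t\}$ directly; the monotonicity formula $\frac{d}{dt}(H/f)=|A|^2+(\mu+\rho)$ together with local area-minimization then forces every leaf to be totally geodesic with $\mu+\rho=0$, hence $\mu=\rho=0$. Your IFT-based construction of the foliation is a legitimate alternative route, and re-running the stability argument on each leaf is fine.

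The genuine gap is the sentence claiming that ``the standard Gauss--Weingarten computation in arc-length coordinates along the normal geodesics then identifies $(U,g)$ with the Riemannian product.'' A foliation by closed totally geodesic hypersurfaces does \emph{not} by itself give a Riemannian product: one also needs the unit normal to have geodesic integral curves, equivalently the lapse of the foliation to be constant on each leaf. In your setup the lapse on $\Sigma_t$ is a positive element of $\ker L_{\Sigma_t}$, hence a constant multiple of $f|_{\Sigma_t}$; but nothing you have written forces $f|_{\Sigma_t}$ to be constant, and the arc-length level sets from $\Sigma_0$ need not coincide with your leaves (think of the foliation of the hyperbolic plane by a pencil of geodesics through a point at infinity). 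This is precisely the nontrivial step the paper isolates: after deriving the induced equations \eqref{equation:induced-static-1}--\eqref{equation:induced-static-2}, it shows that $R_{\Sigma_t}f^2+2|\nabla_{\Sigma_t}f|^2$ is constant on each leaf, integrates to see the constant vanishes, and concludes $f|_{\Sigma_t}$ is constant and $R_{\Sigma_t}=0$. From there the paper reads off $Ric=0$ directly via \eqref{equation:Ricci} together with Gauss and Codazzi, never invoking a product splitting. Your final paragraph is correct \emph{once} the product is in hand, but to close the argument you must first supply a proof that $f$ is constant on each leaf; the conserved-quantity computation in the paper is one clean way to do this.
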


 If we consider, in addition, that the functions are analytic on $M$ and $n=3$, we get a strong result regarding the non-existence of a stable minimal surface on a static perfect fluid space. 

 \begin{corollary}\label{coro1}
    Let $(M^{3},\,g,\,f,\,\rho)$ be an $3$-dimensional static perfect fluid space in which $\mu\geq0$ and $\rho\geq0$. Suppose that $f>0$ on $M$, and $g$ and $f$ are analytic in $M$. Therefore, there is no closed and stable minimal surface on $M.$
 \end{corollary}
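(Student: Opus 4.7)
The plan is to argue by contradiction using Theorem \ref{propositionsplitting} as the main lever, then propagate its conclusions from a neighborhood of the putative minimal surface to all of $M$ via analytic continuation. Suppose that $\Sigma \subset M$ is a closed, stable, minimal surface. Since $f > 0$ on $M$ and $\mu,\,\rho \geq 0$, the hypotheses of Theorem \ref{propositionsplitting} are satisfied on $\Sigma$. A minor technicality is that Theorem \ref{propositionsplitting} is phrased for locally area-minimizing surfaces rather than merely stable ones; noting that its engine is the stability inequality together with Theorem \ref{lemmaf} (which already works for stable minimal hypersurfaces), I would either verify that stability is genuinely enough in the proof, or else replace $\Sigma$ by a locally area-minimizing representative in its homology class inside a small tubular neighborhood. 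Either way, one obtains a nonempty open subset $U \subset M$ on which $\mathrm{Ric}_{g} \equiv 0$ and $\mu = \rho = 0$.

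Next, analyticity is used decisively. In an analytic atlas the components of $\mathrm{Ric}_{g}$ and of the Hessian $\nabla^{2} f$ are real-analytic functions of the components of $g$ and $f$ and their derivatives. The identity principle for real-analytic functions, together with the connectedness of $M$, propagates the vanishing of $\mathrm{Ric}_{g}$ from $U$ to all of $M$. Consequently $R \equiv 0$ and $\mu = R/2 \equiv 0$ on $M$. Substituting $\mathrm{Ric} = 0$ and $\mu = 0$ into the static perfect fluid equation \eqref{eqstfp} reduces it to $\nabla^{2} f = \frac{\rho}{n-1}\,f\,g$; since the right-hand side vanishes on $U$ (where $\rho = 0$), analyticity of $\nabla^{2}f$ yields $\nabla^{2}f \equiv 0$ throughout $M$, and using $f > 0$ one reads off $\rho \equiv 0$ on $M$.

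At this point $(M^{3}, g, f)$ has been upgraded to a static \emph{vacuum} triple with $\mathrm{Ric}_{g} \equiv 0$ and $\nabla^{2} f \equiv 0$; in dimension three Ricci-flat implies flat, and $\nabla f$ is a parallel vector field. The final step, which I expect to be the main obstacle, is to extract the actual contradiction from this rigid picture. If $\nabla f$ is nowhere zero, a de Rham-type splitting $M \simeq \mathbb{R} \times N^{2}$ with $f$ affine along the $\mathbb{R}$-factor eventually violates the positivity $f > 0$ along an unbounded parallel line. If instead $\nabla f \equiv 0$, so that $f$ is a positive constant, then $(M, g)$ is a flat vacuum model; I would invoke the quoted result of \cite{huang2018} that the static potential of a vacuum space must vanish on the boundary to contradict $f > 0$ throughout $M$, or exploit the assumption that $(M, g, f, \rho)$ is a genuine (non-vacuum) perfect fluid space. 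Making this last dichotomy airtight --- in particular, ruling out flat quotient models such as a three-torus with constant $f$, which do carry closed stable minimal surfaces --- is the delicate point, and would likely require reading the non-triviality of $\rho$ out of the very definition of a static perfect fluid space used in the paper.
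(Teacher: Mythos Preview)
Your strategy coincides with the paper's: argue by contradiction, invoke Theorem~\ref{propositionsplitting} to obtain $\mathrm{Ric}_g=0$ on an open set, then use analyticity of $g$ to propagate this to all of $M$. The only difference is in the endgame. After concluding $\mathrm{Ric}_g\equiv 0$ globally, the paper's argument is one line: in dimension three Ricci-flat means flat, so ``$(M^3,g)$ is isometric to the Euclidean space, which does not have closed minimal surfaces.'' It does not pass through the $\nabla^2 f\equiv 0$ analysis or the parallel-$\nabla f$ dichotomy you set up.

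The concern you raise about flat quotient models --- a three-torus with constant $f$ and $\mu=\rho=0$, which does carry closed stable minimal tori --- is exactly the point at which the paper's proof is itself unjustified as written: the passage from ``flat'' to ``Euclidean space'' requires completeness and simple connectivity, or some nontriviality convention on the perfect fluid, none of which appear among the stated hypotheses. So your proposal is more scrupulous than the paper at precisely the spot where both arguments are incomplete; the paper simply asserts what you were trying to prove. You also correctly flag the stable-versus-locally-area-minimizing mismatch; the paper's own proof silently opens with ``a closed, locally area minimizing surface'' despite the statement saying ``stable.''
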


	As we can see, the equation of state assumed in the above result is too strong. Let us see what we can get if we assume a weak equation of state like the NEC. Significant topological implications can be drawn regarding the minimal surface in the three-dimensional scenario not appealing to any analytic condition over the metric as we did in the previous result. We have extended \cite[Proposition 14]{ambrozio2017} to static perfect fluid spaces and provide a different demonstration for it. Notably, our demonstration does not necessitate integrations, allowing us to infer Proposition 24 and 25 in \cite{ambrozio2017}.

 \begin{theorem}\label{topospf}
    	Let $(M^{3},\,g,\,f,\,\rho)$ be a three-dimensional static perfect fluid space satisfying the NEC. Let $\Sigma$ be a locally area-minimizing, compact (possible with $\partial\Sigma\neq\emptyset$), connected surface in $M$. Suppose $f>0$ at $\Sigma$. Then, there is a subset $U$ of $M$ which admits a foliation by surfaces of constant Gaussian curvature.

Moreover,  
\begin{eqnarray*}
2\pi\mathfrak{X}(\Sigma)=\mu|\Sigma|+\int_{\partial\Sigma}K^{g}da, 
\end{eqnarray*}
where $|\Sigma|$, $\mathfrak{X}(\Sigma)$ and $K^g$ stand for the area of $\Sigma$, the Euler number of $\Sigma$, and the geodesic curvature of $\partial\Sigma$, respectively.
\end{theorem}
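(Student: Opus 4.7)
The approach is to analyze $\Sigma$ through its Jacobi operator, which I denote $L := -\Delta^{\Sigma} - |A|^{2} - Ric(\nu,\nu)$, and to exploit the static perfect fluid structure by applying $L$ to the positive function $f$. I would first establish the key identity
\begin{equation*}
Lf = -f\bigl(|A|^{2} + \mu + \rho\bigr).
\end{equation*}
The derivation combines the $(\nu,\nu)$-component of \eqref{eqstfp} in dimension three, $f\,Ric(\nu,\nu) = \nabla^{2}f(\nu,\nu) + \tfrac{\mu-\rho}{2}f$, with \eqref{eq2}, which reads $\Delta f = \tfrac{\mu+3\rho}{2}f$. Since $H=0$, the decomposition $\Delta f = \Delta^{\Sigma}f + \nabla^{2}f(\nu,\nu)$ allows me to eliminate $\nabla^{2}f(\nu,\nu)$, yielding $f\,Ric(\nu,\nu) = (\mu+\rho)f - \Delta^{\Sigma}f$, and substituting this into $Lf$ gives the identity.

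Under NEC the right-hand side is $\le 0$, so $f>0$ is a positive supersolution of $Lu=0$. Because $\Sigma$ is locally area-minimizing, $L$ has non-negative first Dirichlet eigenvalue. Pairing $Lf$ against a positive principal eigenfunction $\phi_{1}$ of $L$ (and, when $\partial\Sigma\neq\emptyset$, handling the boundary via the free-boundary Robin condition or a suitable cutoff) forces the pointwise equality $|A|^{2} + \mu + \rho \equiv 0$ on $\Sigma$. Hence $\Sigma$ is totally geodesic, $\mu+\rho \equiv 0$ on $\Sigma$, and $f$ is itself a positive Jacobi field, $Lf=0$. I would then propagate $\Sigma$ along the normal geodesic flow with speed $f$; the standard infinitesimal-rigidity argument of Ambrozio-type theorems \cite{ambrozio2017}, using that $\Sigma$ locally minimizes area, forces each leaf of the resulting foliation $\{\Sigma_{t}\}$ of a neighborhood $U$ to be minimal, totally geodesic, and to still satisfy $\mu+\rho = 0$. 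Consequently the metric on $U$ splits as $g = dt^{2}+g_{0}$ in Gaussian coordinates, so $Ric(\nu_{t},\nu_{t}) \equiv 0$ on each leaf; moreover, applying the contracted Bianchi identity to \eqref{eqstfp} on $U$ (now with $\mu+\rho=0$) forces $\mu$ to be constant on $U$. The Gauss equation then yields $K_{\Sigma_{t}} = \mu$ pointwise and constant on each leaf, which is the claimed constant-Gaussian-curvature foliation.

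Since $K = \mu$ on $\Sigma$, applying Gauss-Bonnet to the compact surface $\Sigma$ yields
\begin{equation*}
2\pi\mathfrak{X}(\Sigma) = \int_{\Sigma} K\,dA + \int_{\partial\Sigma} K^{g}\,da = \mu|\Sigma| + \int_{\partial\Sigma} K^{g}\,da,
\end{equation*}
with $\mu|\Sigma|$ interpreted as $\int_{\Sigma}\mu\,dA$ (which agrees with $\mu\cdot|\Sigma|$ once the constancy of $\mu$ is in hand). The main technical obstacle I anticipate is the eigenvalue pairing step when $\partial\Sigma\neq\emptyset$: since $f$ does not vanish on $\partial\Sigma$, pairing against a Dirichlet eigenfunction picks up boundary terms of the form $\int_{\partial\Sigma} f\,\partial_{\eta}\phi_{1}\,ds$, and these must be controlled either by adopting the Robin boundary condition intrinsic to the free-boundary Jacobi operator when $\Sigma$ meets $\partial M$ orthogonally, or by exploiting that $\partial\Sigma$ is expected to lie in a level set $\{f=c\}$, so that $\phi = f - c$ is a legitimate Dirichlet test function. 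This is precisely what makes the extension beyond the closed case \cite[Proposition~14]{ambrozio2017} non-trivial.
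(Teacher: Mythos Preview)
Your overall architecture matches the paper's: exploit the identity $\Delta_{\Sigma}f + f\,Ric(N,N) = (\mu+\rho)f$, use area-minimizing rigidity to make the leaves $\Sigma_t$ of the speed-$f$ normal flow totally geodesic with $\mu+\rho=0$, deduce constant Gaussian curvature, and finish with Gauss--Bonnet. The paper packages the first two steps as the monotonicity formula $\frac{d}{dt}(H/f)=|A|^2+(\mu+\rho)$ (Lemma~\ref{lemma:monotonicity}) rather than an eigenvalue pairing, which sidesteps the boundary-term difficulty you flag at the end; but this is a difference of presentation, not of substance.

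The genuine gap is the sentence ``Consequently the metric on $U$ splits as $g=dt^{2}+g_{0}$ in Gaussian coordinates, so $Ric(\nu_{t},\nu_{t})\equiv 0$.'' This does not follow. The flow you are using has $\partial_t\Phi=-fN$, so in these coordinates $g=f^{2}\,dt^{2}+g_t$; the totally geodesic condition gives $\partial_t g_t=-2fA_t=0$, hence $g_t=g_0$, but you obtain $g=f^{2}\,dt^{2}+g_0$ with $f=f(x,t)$ a priori depending on the leaf variable. This is a twisted product, not a Riemannian product, and for such metrics $Ric(N,N)$ is not zero --- it involves $\Delta_{\Sigma}f/f$. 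Equivalently, $\nabla_N N=-f^{-1}\nabla^{\Sigma}f$ is nonzero unless $f$ is constant on each leaf, which is precisely the conclusion you are trying to reach. If instead you pass to genuine Gaussian coordinates (unit-speed normal geodesics), you lose the information that the equidistant surfaces are totally geodesic, since those are not the $\Sigma_t$ of the $f$-flow.

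The paper closes this gap without any splitting: from the totally geodesic condition and the static equations one computes $\nabla^{2}_{\Sigma_t}f=\tfrac{1}{2}(K_t-\mu)f\,g_{\Sigma_t}$, hence $\Delta_{\Sigma_t}f=(K_t-\mu)f$ and, via a divergence identity, $K_t=c/f^{3}+\mu/3$. Independently, the three-dimensional curvature decomposition (Weyl $\equiv 0$) combined with \eqref{equation:Ricci} forces $Ric(N_t,N_t)=0$; then $\Delta_{\Sigma_t}f=0$ follows from \eqref{eq001}, giving $K_t=\mu$ pointwise and $f$ constant on each leaf. Only after this does one get the product structure. You should replace the splitting shortcut with this direct computation of $Ric(N_t,N_t)$.
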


\begin{remark}
The above theorem agreed with Theorem C in \cite{ambrozio2017}. Considering $\partial\Sigma=\emptyset$ we get 
\begin{eqnarray*}
2\pi\mathfrak{X}(\Sigma)=\mu|\Sigma|. 
\end{eqnarray*}
Therefore, the topology of $\Sigma$ depends on the density function $\mu$ (or scalar curvature $R$).

    We point out the conclusion of Theorem \ref{topospf} also agreed with Theorem \ref{lemmaf}. In fact, in the above theorem we have $\mu=-\rho$ at $\Sigma$, and if $\partial \Sigma=\emptyset$ we get
    \begin{eqnarray*}
2\pi\mathfrak{X}(\Sigma)+\int_{\Sigma}\rho=0.
\end{eqnarray*}
\end{remark}

 When considering $\mu=-\rho$, we are in the vacuum ambient with a non-null cosmological constant, which satisfies \eqref{staticnonnullcosm}. In this scenario, the set $f^{-1}(0)$ has a physical meaning. It is closely related to the event horizon of a static black hole. In a more general background, we can consider the boundary to be a photon surface, which we already know is represented by the set $f^{-1}(c)$, where $c$ is a non-negative constant.

Here, $\mathfrak{M}_{Haw}(\Sigma)$ stands for the Hawking quasi-local mass of a surface $\Sigma$ in $(M^3,\,g)$, i.e.,
\begin{eqnarray}\label{haw}
    \mathfrak{M}_{Haw}(\Sigma)=\sqrt{\dfrac{|\Sigma|}{16\pi}}\left(1 - \dfrac{1}{16\pi}\int_{\Sigma}H^2\right).
\end{eqnarray}
For example, {if} $\Sigma$ is a sphere in $\mathbb{R}^3$. {Then, by Gauss-Bonnet Theorem, we obtain
$$\int_{\Sigma}H^2\geq8\pi\mathfrak{X}(\Sigma)=16\pi,$$
since $H^2\geq4K$, where $K$ is the Gauss curvature.} 

Thus, the Hawking mass of a sphere in $\mathbb{R}^3$ is always nonpositive. This observation concludes that the concept of mass tends to underestimate the mass in a region.

When the ambient manifold $M^3$ has nonnegative scalar curvature, a classic result of Christodoulou and Yau {says} that a CMC sphere on $M^3$ must have nonnegative Hawking mass. Recent results concerning 
$\mathfrak{M}_{Haw}=0$ were given
by Sun \cite{sun2018} and by Shi, Sun, Tian and Wei \cite{shi2019}. In \cite{miao2020}, the authors proved that given a constant mean curvature surface $\Sigma$ that bounds a compact manifold with nonnegative scalar curvature (and satisfies some intrinsic conditions), it must have positive Hawking mass. As an interesting application, they proved an identity involving the Hawking mass and the Brown-York mass $\mathfrak{M}_{BY}$:
\begin{eqnarray*}
   \mathfrak{M}_{BY}(\Sigma)=\mathfrak{M}_{Haw}(\Sigma) + \int_{\Sigma}H_0  + \sqrt{\dfrac{|\Sigma|}{16\pi}}\left(1-\sqrt{\dfrac{|\Sigma|}{16\pi}}H\right)^2 - \sqrt{\dfrac{|\Sigma|}{4\pi}},
\end{eqnarray*}
where $H_0$ is the mean curvature of the isometric embedding of $\Sigma$ in $\mathbb{R}^3.$ The mean curvature $H$ of $\Sigma$ is a positive constant \cite[Remark 1.6]{miao2020}. Remember, the Brown-York mass is defined by
\begin{eqnarray*}
    \mathfrak{M}_{BY}(\Sigma)=\dfrac{1}{8\pi}\int_{\Sigma}(H_0 - H).
\end{eqnarray*}
An upper bound of $\mathfrak{M}_{BY}$ for CMC surfaces was first derived by Lin and Sormani
\cite{lin2016} for an arbitrary metric $g$ on $\Sigma$. An upper bound of $\mathfrak{M}_{BY}$ was derived by Cabrera Pacheco, Cederbaum, McCormick, and Miao, assuming the Gauss curvature of $\Sigma$ was positive \cite{cabrerap2017}.

As we can see, the Wilmore functional is a fundamental piece on the definition of Hawking mas, i.e.,
$$Wilmore=\dfrac{1}{4}\int_{\partial M}H^2.$$
  The theorem below can be extracted from Theorem 22 in \cite{tiarlos} and Theorem B in \cite{fang2019} (where the authors used different approaches). In what follows, $H$, $|\partial M|$ and $\mathfrak{X}(\partial M)$ stand for mean curvature, the area, and the Euler number of $\partial M$, respectively.

   \begin{theorem}\label{cruztheorem1}
    	Let $(M^3,\,g,\,f,\,\rho)$ be a compact three-dimensional static perfect fluid space with  boundary $\partial M=f^{-1}(c)$, where $c\in[0,\,+\infty)$. Suppose that $\kappa=|\nabla f|$ is constant at $\partial M$. Then, 
  \begin{eqnarray*}
   \sqrt{\dfrac{16\pi}{|\partial M|}}\mathfrak{M}_{Haw}(\partial M)      \leq \int_M f\left[|\mathring{R}ic|^2 - \frac{1}{6}\mu(\mu+3\rho)\right] +2\pi\kappa( 2- \mathfrak{X}(\partial M)).
    \end{eqnarray*}
    Equality holds if and only if $\partial M$ is totally umbilical.
\end{theorem}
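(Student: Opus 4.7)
The strategy is to rewrite both sides of the inequality as boundary integrals on $\partial M = f^{-1}(c)$ and to couple the resulting identity with the Gauss--Bonnet theorem applied to the closed $2$-surface $\partial M$. The nonnegative integral $\int_{\partial M}|\mathring{II}|^2$ will provide the slack that becomes the inequality, matching the stated equality characterisation (totally umbilical boundary).

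First, I would derive an integral identity for the bulk term. Starting from the traceless identity $f\mathring{R}ic = \mathring{\nabla}^2 f$ in \eqref{traceless}, one has the pointwise relation $\mathring{R}ic^{ij}\nabla_i\nabla_j f = f|\mathring{R}ic|^2$. Integrating by parts and invoking the contracted second Bianchi identity $\nabla_i\mathring{R}ic^{ij} = \tfrac{n-2}{2n}\nabla^j R$, which for $n=3$ with $R=2\mu$ reduces to $\tfrac{1}{3}\nabla^j\mu$, and then integrating by parts once more using $\Delta f = \tfrac{\mu+3\rho}{2}f$, yields the key identity
\[ \int_M f\left[|\mathring{R}ic|^2 - \tfrac{1}{6}\mu(\mu+3\rho)\right] = \int_{\partial M}\left[Ric(\nu,\nabla f) - \mu\,\partial_\nu f\right]. \]
Because $\nabla f$ is normal to the level set $\partial M = f^{-1}(c)$, one has $\nabla f = (\partial_\nu f)\nu$ on $\partial M$, so $Ric(\nu,\nabla f) = (\partial_\nu f)Ric(\nu,\nu)$; by hypothesis, $\partial_\nu f$ equals the constant $\pm\kappa$.

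Second, the Gauss equation for $\partial M \subset M^3$, together with $R = 2\mu$ and the two-dimensional identity $\det II = \tfrac{H^2}{4} - \tfrac{|\mathring{II}|^2}{2}$, gives
\[ K^{\partial M} = \mu - Ric(\nu,\nu) + \tfrac{H^2}{4} - \tfrac{|\mathring{II}|^2}{2}. \]
Integrating over $\partial M$ and applying Gauss--Bonnet, $\int_{\partial M}K^{\partial M} = 2\pi\mathfrak{X}(\partial M)$, produces
\[ \int_{\partial M}[Ric(\nu,\nu) - \mu] = \tfrac{1}{4}\int_{\partial M} H^2 - \tfrac{1}{2}\int_{\partial M}|\mathring{II}|^2 - 2\pi\mathfrak{X}(\partial M). \]

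Third, I would substitute this Gauss--Bonnet identity into the boundary term from the first step, invoke the definition $\sqrt{16\pi/|\partial M|}\,\mathfrak{M}_{Haw}(\partial M) = 1 - \tfrac{1}{16\pi}\int_{\partial M} H^2$, and regroup so that the topological contribution $2\pi\kappa(2-\mathfrak{X}(\partial M))$ emerges by combining the Gauss--Bonnet term $-2\pi\kappa\mathfrak{X}(\partial M)$ with the constant $1$ in the normalised Hawking mass. Discarding the nonnegative slack involving $|\mathring{II}|^2$ then gives the stated upper bound, with equality exactly when $\mathring{II}\equiv 0$ on $\partial M$. The main obstacle is the careful sign bookkeeping produced by $\partial_\nu f = \pm\kappa$ (fixed by the outward-normal convention and by whether $f$ increases or decreases across $\partial M$), which dictates the direction of the slack, together with the precise tracking of algebraic factors needed to recover the coefficient $2\pi\kappa$ of $(2-\mathfrak{X}(\partial M))$ in the final statement.
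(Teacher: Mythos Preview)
Your proposal is correct and follows essentially the same route as the paper. The only cosmetic difference is in Step~1: the paper packages the integration-by-parts of $f|\mathring{R}ic|^2=\langle\mathring{R}ic,\nabla^2 f\rangle$ via the Gover--Orsted integral formula (applied to the Einstein tensor with $X=\nabla f$), whereas you perform the two integrations by parts directly; the resulting boundary identity is the same. In Step~2 the paper introduces the slack through the equivalent inequality $(n-1)|A|^2\ge H^2$ rather than writing $|\mathring{II}|^2\ge 0$, but again this is the same content. Your explicit isolation of the equality case via $\mathring{II}\equiv 0$ is slightly cleaner, and your flagged concern about the sign of $\partial_\nu f$ is exactly the orientation convention the paper fixes by taking $N=-\nabla f/|\nabla f|$ as the outward normal.
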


\begin{remark}
    The condition of $|\nabla f|$ in the above theorem follows immediately if $c=0.$
\end{remark}

\begin{corollary}
       	Let $(M^3,\,g,\,f,\,\rho)$ be a compact three-dimensional static perfect fluid space with totally umbilical boundary $\partial M=f^{-1}(c)$ homeomorphic to a sphere, where $c\in[0,\,+\infty)$. Suppose that $\kappa=|\nabla f|$ is constant at $\partial M$ and $|\mathring{R}ic|^2 \geq \dfrac{1}{6}\mu(\mu+3\rho)$. Then, $\mathfrak{M}_{Haw}(\partial M)\geq0$.
\end{corollary}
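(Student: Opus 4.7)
The plan is to read this off Theorem~\ref{cruztheorem1} directly, since each hypothesis of the corollary is tailored to force one term in the inequality to behave favorably. First I would invoke the equality case of Theorem~\ref{cruztheorem1}: because $\partial M$ is assumed totally umbilical, the inequality becomes an equality, so
\begin{eqnarray*}
\sqrt{\dfrac{16\pi}{|\partial M|}}\,\mathfrak{M}_{Haw}(\partial M)= \int_{M} f\left[|\mathring{R}ic|^{2}-\tfrac{1}{6}\mu(\mu+3\rho)\right] + 2\pi\kappa\bigl(2-\mathfrak{X}(\partial M)\bigr).
\end{eqnarray*}

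Next I would use the topological hypothesis that $\partial M$ is homeomorphic to a sphere, so $\mathfrak{X}(\partial M)=2$ and the boundary term $2\pi\kappa(2-\mathfrak{X}(\partial M))$ vanishes. The remaining step is to show the integral is non-negative: by Definition~\ref{def1} the static potential satisfies $f>0$ on $M$ (and $f=c\geq 0$ on $\partial M$), and by hypothesis $|\mathring{R}ic|^{2}\geq \tfrac{1}{6}\mu(\mu+3\rho)$ pointwise, so the integrand is non-negative. Therefore the right-hand side is non-negative, and since $|\partial M|>0$ we conclude $\mathfrak{M}_{Haw}(\partial M)\geq 0$.

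There is no real obstacle here, since the corollary is engineered so that each hypothesis (totally umbilical, spherical topology, pointwise curvature bound, constant $\kappa$) exactly kills one potential sign problem in Theorem~\ref{cruztheorem1}. The only point worth mentioning explicitly in the write-up is that the hypothesis $|\nabla f|=\kappa$ constant on $\partial M$ is what lets us apply Theorem~\ref{cruztheorem1} in the first place, and that this is automatic when $c=0$ by the remark following it.
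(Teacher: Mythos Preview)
Your argument is correct and is exactly the intended route: the corollary is an immediate consequence of Theorem~\ref{cruztheorem1}, using the equality case (totally umbilical boundary) so that the Hawking mass equals the right-hand side, then killing the Euler characteristic term via $\mathfrak{X}(\partial M)=2$ and observing that the remaining integral is non-negative because $f\geq 0$ and $|\mathring{R}ic|^{2}\geq\tfrac{1}{6}\mu(\mu+3\rho)$. The paper does not give a separate proof, treating it as a direct consequence in precisely this way.
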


 \begin{corollary}\label{cruztheorem}
    	Let $(M^3,\,g,\,f,\,\rho)$ be a compact three-dimensional static perfect fluid space such that $\mu=-\rho$ with  boundary $\partial M=f^{-1}(c)$, where $c\in[0,\,+\infty)$. Suppose that $\kappa=|\nabla f|$ is constant at $\partial M$. Then, 
  \begin{eqnarray*}
        2\pi\kappa\mathfrak{X}(\partial M) \leq \int_M f|\mathring{R}ic|^2 +\frac{\kappa}{3}\mu|\partial M|+\dfrac{\kappa}{4}\int_{\partial M}H^2.
    \end{eqnarray*}
    Equality holds if and only if $\partial M$ is totally umbilical.

\end{corollary}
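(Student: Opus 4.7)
The plan is to bypass Theorem \ref{cruztheorem1} and derive the corollary directly by pairing the Gauss--Bonnet theorem on $\partial M$ with an integration-by-parts identity extracted from the static perfect fluid equations. Since $\mu = -\rho$, Lemma \ref{lmasood_O} forces $R = 2\mu$ to be a constant and reduces the structure to the non-null cosmological system \eqref{staticnonnullcosm}. This constancy will be used both to identify $R_M = 2\mu$ pointwise on $\partial M$ and to make the contracted Bianchi contribution vanish in the bulk.

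First I would write the Gauss equation for the hypersurface $\partial M^2 \subset M^3$, $2K = R_M - 2\,\mathrm{Ric}(\nu,\nu) + H^2 - |h|^2$, and substitute $|h|^2 = |\mathring{h}|^2 + \tfrac{1}{2}H^2$. Integrating over $\partial M$ via Gauss--Bonnet $\int K = 2\pi\mathfrak{X}(\partial M)$ and multiplying by $\kappa/2$ yields
\begin{equation*}
2\pi\kappa\mathfrak{X}(\partial M) = \kappa\mu|\partial M| - \kappa\int_{\partial M}\mathrm{Ric}(\nu,\nu) + \frac{\kappa}{4}\int_{\partial M}H^2 - \frac{\kappa}{2}\int_{\partial M}|\mathring{h}|^2.
\end{equation*}

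Next I convert the boundary Ricci term into an interior quantity. Using \eqref{traceless} and the tracelessness of $\mathring{R}ic$, one gets $\int_M f|\mathring{R}ic|^2 = \int_M \mathrm{Ric}:\nabla^2 f - \tfrac{R}{3}\int_M \Delta f$. Integrating $\mathrm{Ric}:\nabla^2 f$ by parts and invoking the contracted Bianchi identity $\nabla_i \mathrm{Ric}^{ij} = \tfrac{1}{2}\nabla^j R = 0$ (valid because $R$ is constant) kills the bulk piece and leaves a boundary term which, using $\nabla f = -\kappa\nu$ on $\partial M$, equals $-\kappa\int_{\partial M}\mathrm{Ric}(\nu,\nu)$. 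The term $\int_M \Delta f$ is $-\kappa|\partial M|$ by the divergence theorem, so rearranging produces
\begin{equation*}
\kappa\int_{\partial M}\mathrm{Ric}(\nu,\nu) = \frac{2\kappa\mu}{3}|\partial M| - \int_M f|\mathring{R}ic|^2.
\end{equation*}

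Substituting this into the Gauss--Bonnet identity collapses the two $\kappa\mu|\partial M|$ contributions into $\tfrac{\kappa\mu}{3}|\partial M|$ and yields the exact equality
\begin{equation*}
2\pi\kappa\mathfrak{X}(\partial M) = \int_M f|\mathring{R}ic|^2 + \frac{\kappa\mu}{3}|\partial M| + \frac{\kappa}{4}\int_{\partial M}H^2 - \frac{\kappa}{2}\int_{\partial M}|\mathring{h}|^2.
\end{equation*}
Dropping the non-positive last term gives the claimed inequality, and equality occurs precisely when $\mathring{h}\equiv 0$, i.e., when $\partial M$ is totally umbilical. The delicate point I anticipate is the sign bookkeeping in the integration-by-parts step: the direction of $\nabla f$ on $\partial M = f^{-1}(c)$ relative to the outward unit normal (equivalently, whether $f$ is larger in the interior or on the boundary) must be pinned down carefully so that the boundary terms carry the correct sign and the algebra closes to the stated form.
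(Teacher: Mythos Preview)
Your argument is correct and is essentially the same as the paper's. The paper obtains the corollary as a special case of Theorem \ref{cruztheorem1}, whose proof packages the integration-by-parts step via the Gover--Orsted integral formula applied to the Einstein tensor with $X=\nabla f$; unwinding that formula in the case $\nabla R=0$ gives exactly your identity $\kappa\int_{\partial M}\mathrm{Ric}(\nu,\nu)=\tfrac{2\kappa\mu}{3}|\partial M|-\int_M f|\mathring{R}ic|^2$, and the paper then applies the Gauss equation as an inequality $(n-1)|A|^2\ge H^2$ where you instead write the exact decomposition $|h|^2=|\mathring{h}|^2+\tfrac{1}{2}H^2$ and drop the explicit defect term---the two formulations are equivalent, and both pin the equality case to $\mathring{h}=0$.
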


An interesting consequence of Corollary \ref{cruztheorem} is obtained when considering $\partial M$ as a static horizon boundary, i.e., $f^{-1}(0) = \partial M$ (cf. \cite[Theorem 2, Theorem C and Theorem B]{ambrozio2017}). The de Sitter space is an Einstein manifold, the Nariai space has a parallel Ricci tensor but is not Einstein, and the Schwarzschild-de Sitter spaces of positive mass are locally conformally ﬂat but {does} not have a parallel Ricci tensor. In the three-dimensional case, those are all known examples of
compact, simply connected static triples with positive scalar curvature (cf. \cite[Theorem 1]{ambrozio2017}).

    In Theorem \ref{cruztheorem}, we can assume $c=0.$ In this case, $H=0$ (see \cite{coutinho}). Therefore, we have 
    \begin{eqnarray*}
        2\pi\mathfrak{X}(\partial M) \geq\frac{\mu}{3}|\partial M|.
    \end{eqnarray*}
    The equality holds if $(M^3,\,g)$ is an Einstein manifold (de Sitter space).

For the non-compact case, we have the next inequality involving the Hawking mass. The following theorem was inspired by Theorem \ref{cruztheorem1}. In \cite{miao2005}, the author proved a lower bound for the Hawking mass of the boundary of an asymptotically flat static vacuum space, assuming some additional conditions in the geometry of the boundary.

\begin{theorem}\label{cruztheorem007-c1}
		Let $(M^3,\,g,\,f,\,\rho)$ be a three-dimensional (compact or non-compact)  static perfect fluid space such that $\Sigma=f^{-1}(c)$ is compact. Suppose that $\kappa=|\nabla f|$ is a constant at $\Sigma$. Then,
    \begin{eqnarray*}
			2\sqrt{\dfrac{16\pi}{|\Sigma|}}\mathfrak{M}_{Haw}(\Sigma) \leq 2-\mathfrak{X}(\Sigma) + \frac{\kappa}{2\pi c}\int_{\Sigma}H- \dfrac{\rho_{0}}{2\pi}|\Sigma|,
		\end{eqnarray*}
  where $\mathfrak{X}(\Sigma)$ is the Euler number of $\Sigma.$ The equality holds if and only if $\Sigma$ is totally umbilical. Here, $\rho_0=\rho\big|_{\Sigma}$ is a constant.
	\end{theorem}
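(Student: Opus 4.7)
My plan is to recast the inequality as a lower bound on $\int_{\Sigma}H^{2}$, then obtain such a bound via the Gauss equation plus Gauss--Bonnet. Unpacking the definition (\ref{haw}) of the Hawking mass yields
$$
2\sqrt{\tfrac{16\pi}{|\Sigma|}}\,\mathfrak{M}_{Haw}(\Sigma)=2-\tfrac{1}{8\pi}\int_{\Sigma}H^{2},
$$
so the target estimate is equivalent to the lower bound
$$
\tfrac{1}{8\pi}\int_{\Sigma}H^{2}\ \geq\ \mathfrak{X}(\Sigma)+\tfrac{\rho_{0}}{2\pi}|\Sigma|-\tfrac{\kappa}{2\pi c}\int_{\Sigma}H,
$$
with equality precisely when the traceless second fundamental form $\mathring A$ of $\Sigma$ vanishes identically. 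The right-hand side is what I expect to extract from the Gauss equation after using (\ref{eqstfp})--(\ref{eq2}) to eliminate the ambient Ricci term along $\Sigma$.

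\textbf{Key inputs.} First I will justify that $\rho_{0}=\rho|_{\Sigma}$ is genuinely constant. Taking the divergence of (\ref{eqstfp}) with $n=3$, applying the contracted Bianchi identity $\nabla^{i}R_{ij}=\tfrac{1}{2}\nabla_{j}R$ and the standard commutator $\mathrm{div}\,\nabla^{2}f=d(\Delta f)+\mathrm{Ric}(\nabla f,\cdot)$, and finally eliminating $R=2\mu$ and $\Delta f$ via (\ref{eq2}), I expect to arrive after algebra at
$$
f\nabla\rho+(\mu+\rho)\nabla f=0.
$$
Since $\kappa=|\nabla f|>0$ and $\Sigma$ is connected, this forces $\rho$ to be constant on $\Sigma$. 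Next, setting $\nu=\nabla f/\kappa$ and exploiting that $|\nabla f|$ is constant on $\Sigma$, the level-set mean curvature formula gives $\kappa H=\Delta f-\nabla^{2}f(\nu,\nu)$. Combining this with (\ref{eqstfp}) evaluated at $(\nu,\nu)$ and with (\ref{eq2}) should yield the pointwise identity
$$
\mathrm{Ric}(\nu,\nu)=\mu+\rho_{0}-\tfrac{\kappa H}{c}\qquad\text{on }\Sigma.
$$
A central observation is that the free $\mu$-contribution will cancel against $R=2\mu$ in the Gauss equation below, so I do \emph{not} need constancy of $\mu$ along $\Sigma$.

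\textbf{Completion and main obstacle.} I will feed the previous identity into the Gauss equation for a surface in a $3$-manifold, which together with $|A|^{2}=|\mathring A|^{2}+\tfrac{1}{2}H^{2}$ reads
$$
2K_{\Sigma}=R-2\mathrm{Ric}(\nu,\nu)+\tfrac{1}{2}H^{2}-|\mathring A|^{2},
$$
where $K_{\Sigma}$ is the Gaussian curvature. Substitution cancels the $\mu$ terms and produces
$$
2K_{\Sigma}=-2\rho_{0}+\tfrac{2\kappa H}{c}+\tfrac{1}{2}H^{2}-|\mathring A|^{2}.
$$
Integrating over $\Sigma$, invoking Gauss--Bonnet $\int_{\Sigma}K_{\Sigma}=2\pi\mathfrak{X}(\Sigma)$, and discarding the nonnegative term $\int_{\Sigma}|\mathring A|^{2}$ will deliver exactly the lower bound on $\int_{\Sigma}H^{2}$ isolated in the Strategy; reinserting the Hawking mass closes the argument, with equality iff $\mathring A\equiv 0$ (total umbilicity). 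The main obstacle will be the Bianchi-type computation that yields $f\nabla\rho=-(\mu+\rho)\nabla f$: the various $\mu$- and $\rho$-contributions must combine in precisely the right way for the identity to survive. Once that step and the level-set formula for $\mathrm{Ric}(\nu,\nu)$ are established, the remainder is a mechanical calculation.
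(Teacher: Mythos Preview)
Your proposal is correct and follows essentially the same route as the paper. The paper first proves the intermediate inequality (Theorem \ref{cruztheorem007}) by combining the level-set identity $\Delta_{\Sigma}f+f\mathring{R}ic(N,N)+H\langle\nabla f,N\rangle=\tfrac{n-1}{n}\Delta f$ with the Gauss equation and the umbilicity inequality $(n-1)|A|^{2}\ge H^{2}$, then integrates and applies Gauss--Bonnet; your version collapses these steps by using $f\equiv c$ on $\Sigma$ to compute $Ric(\nu,\nu)=\mu+\rho_{0}-\kappa H/c$ pointwise and plugging directly into $2K_{\Sigma}=R-2Ric(\nu,\nu)+\tfrac12 H^{2}-|\mathring A|^{2}$. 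Both arguments hinge on the same ingredients (the structure equations \eqref{eqstfp}--\eqref{eq2}, Lemma \ref{lmasood_O} for the constancy of $\rho$ on $\Sigma$, the Gauss equation, and Gauss--Bonnet), and both identify equality with $\mathring A\equiv 0$; yours is simply a slightly more direct packaging of the same computation.
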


 As an immediate consequence, we get the following corollary.

 \begin{corollary}
		Let $(M^3,\,g,\,f)$ be a three-dimensional static vacuum space satisfying \eqref{vac} such that $\Sigma=f^{-1}(c)$. Then,
 \begin{eqnarray}\label{BYHAW}
			\mathfrak{M}_{Haw}(\Sigma)\leq \dfrac{1}{2}\left[2-\mathfrak{X}(\Sigma) + \dfrac{\kappa}{8c\pi}\int_{\Sigma}H\right]\sqrt{\dfrac{|\Sigma|}{16\pi}}.
		\end{eqnarray}
The equality holds if and only if $\Sigma$ is totally umbilical. Any constant mean curvature surface in the Schwarzschild space satisfies the equality in \eqref{BYHAW}.
	\end{corollary}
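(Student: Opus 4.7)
The strategy is to deduce this corollary as a direct specialization of Theorem \ref{cruztheorem007-c1}. In the static vacuum case we have $\mu\equiv\rho\equiv 0$ on $M$, so in particular $\rho_0=\rho|_\Sigma=0$ and the term $-\rho_0|\Sigma|/(2\pi)$ in the parent inequality vanishes. Starting from
\begin{eqnarray*}
2\sqrt{\tfrac{16\pi}{|\Sigma|}}\mathfrak{M}_{Haw}(\Sigma)\leq 2-\mathfrak{X}(\Sigma)+\tfrac{\kappa}{2\pi c}\int_\Sigma H,
\end{eqnarray*}
dividing by $2$ and multiplying both sides by $\sqrt{|\Sigma|/(16\pi)}$ yields the claimed upper bound for $\mathfrak{M}_{Haw}(\Sigma)$ in \eqref{BYHAW}.

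Before invoking Theorem \ref{cruztheorem007-c1}, one must check its hypothesis that $\kappa=|\nabla f|$ is constant on $\Sigma$. This is a classical fact about static vacuum triples: combining $\nabla^2 f=f\,Ric$ with the contracted second Bianchi identity and using $R=0$ in vacuum, a short computation shows that $|\nabla f|$ is locally constant on each regular level set of $f$. Since $\Sigma=f^{-1}(c)$ is assumed to be a regular level set of $f$, this hypothesis is met automatically and no extra assumption has to be imposed in the corollary.

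For the equality statement, Theorem \ref{cruztheorem007-c1} asserts that equality occurs precisely when $\Sigma$ is totally umbilical. In the Schwarzschild manifold the coordinate spheres $\{r=\mathrm{const}\}$ are totally umbilical by spherical symmetry, and Brendle's Alexandrov-type rigidity theorem guarantees that any closed embedded constant mean curvature surface in Schwarzschild must coincide with one such coordinate sphere. Consequently every CMC surface in Schwarzschild is totally umbilical and therefore saturates \eqref{BYHAW}, which gives the final assertion of the corollary. The only genuine obstacle in the plan is the verification that $|\nabla f|$ is constant on $\Sigma$, but as noted this is routine in the static vacuum setting; once it is in place, the rest of the argument is a direct substitution into Theorem \ref{cruztheorem007-c1}.
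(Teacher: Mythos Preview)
Your proposal is correct and follows essentially the same route as the paper: the corollary is stated there as ``an immediate consequence'' of Theorem \ref{cruztheorem007-c1}, with the vacuum assumption killing the $\rho_0$ term, and the remark immediately following the corollary records exactly your observation that $\kappa=|\nabla f|$ is automatically constant on level sets in the static vacuum case. For the Schwarzschild equality the paper also invokes Brendle's result that closed CMC surfaces in Schwarzschild are coordinate spheres, but then goes on to verify the identity by an explicit computation of $H$, $\kappa$, $\mathfrak{M}_{Haw}$ and $\mathfrak{M}_{BY}$ on those spheres, whereas you short-circuit this by appealing directly to the ``totally umbilical $\Leftrightarrow$ equality'' clause of the parent theorem; both arguments are valid and yours is the more economical one.
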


\begin{remark}
First, we remember that in the vacuum, the condition over $\kappa$ is trivially satisfied (cf. \cite{cederbaum2021}). It is important to emphasize that the above theorem holds true if $M$ is  
  compact or non-compact and not necessarily $\Sigma$ is the boundary. Costa et al. \cite{costa2023} recently provided a positive mass theorem for the Brown-York mass, considering a compact static perfect fluid space in which $f^{-1}(0)$ is the boundary. 

 We prove that equality holds in Theorem \ref{cruztheorem007-c1} for a constant mean curvature surface in the Schwarzschild space.
\end{remark}

We can derive an interesting result for a locally conformally flat perfect fluid space.

\begin{theorem}
   Let $(M^3,\,g,\,f,\,\rho)$ be a conformally flat three-dimensional static perfect fluid space with $\Sigma=f^{-1}(c)$, where $c$ is a regular value of $f$. Then,
 \begin{eqnarray}\label{eqqqoi}
2\pi\mathfrak{X}(\Sigma) =\left[H\left(\frac{1}{4}H-\frac{\kappa}{c}\right)-\rho_0\right]|\Sigma|,
\end{eqnarray}
where $H$, $\kappa$ and $\rho_{0}$ are constants.
In particular, if $(M^3,\,g,\,f,\,\rho)$ is an Einstein static perfect fluid space, we have
    \begin{eqnarray*}\label{ambrozio2}
2\pi\mathfrak{X}(\Sigma) = \left(\dfrac{1}{4}H^2+\dfrac{\mu}{3}\right)|\Sigma|,
\end{eqnarray*}
where $H$ and $\mu$ are constants.
\end{theorem}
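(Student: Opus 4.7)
The strategy is to compute the intrinsic Gaussian curvature $K^\Sigma$ pointwise from ambient data, show it is constant on $\Sigma$, and then conclude with Gauss-Bonnet. The main ingredients are the Gauss equation (together with the three-dimensional reduction $K(e_1,e_2)=\tfrac{R}{2}-Ric(\nu,\nu)$) and the static perfect fluid equation \eqref{eqstfp} contracted with the unit normal.

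By Proposition 1 of \cite{leandro2019}, recalled in the Introduction, every locally conformally flat static perfect fluid space is spherically symmetric; hence $\Sigma=f^{-1}(c)$ is a totally umbilical round sphere with constant mean curvature $H$, and $\kappa=|\nabla f|$, $\mu$, and $\rho_0=\rho|_\Sigma$ are all constant on $\Sigma$. Choosing $\nu=\nabla f/|\nabla f|$ and using total umbilicity ($\det h = H^2/4$), the Gauss equation combined with the three-dimensional identity above reads
\begin{equation*}
K^\Sigma=\frac{R}{2}-Ric(\nu,\nu)+\frac{H^2}{4}.
\end{equation*}
To evaluate $Ric(\nu,\nu)$ I contract \eqref{eqstfp} with $\nu\otimes\nu$, obtaining $c\,Ric(\nu,\nu)=\nabla^2 f(\nu,\nu)+\tfrac{\mu-\rho_0}{2}c$. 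The normal Hessian $\nabla^2 f(\nu,\nu)$ is then extracted by subtracting the tangential trace of $\nabla^2 f$ from $\Delta f$: for tangential fields $X$ extended to stay tangent to $\Sigma$, one has $\nabla^2 f(X,X)=-\langle\nabla_X X,\nabla f\rangle=-\kappa\langle\nabla_X X,\nu\rangle$, so its trace along $T\Sigma$ equals $-\kappa H$. Combined with $\Delta f=\tfrac{\mu+3\rho_0}{2}c$ from \eqref{eq2}, this yields $\nabla^2 f(\nu,\nu)=\tfrac{\mu+3\rho_0}{2}c+\kappa H$. Substituting back and using $\mu=R/2$, the $\mu$-terms cancel and
\begin{equation*}
K^\Sigma=\frac{H^2}{4}-\frac{\kappa H}{c}-\rho_0,
\end{equation*}
which is a constant on $\Sigma$. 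Integrating and applying Gauss-Bonnet $\int_\Sigma K^\Sigma\,da=2\pi\mathfrak{X}(\Sigma)$ then yields \eqref{eqqqoi}.

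For the Einstein refinement, $\mathring{R}ic\equiv 0$ together with \eqref{traceless} forces $\mathring{\nabla}^2 f\equiv 0$, so $\nabla^2 f=\tfrac{\Delta f}{3}g$. Restricting to tangent directions and tracing gives $-\kappa H=\tfrac{(\mu+3\rho_0)c}{3}$, i.e.\ $\kappa H/c=-\tfrac{\mu}{3}-\rho_0$. Substituting into the general formula the $\rho_0$ contributions cancel and one recovers $K^\Sigma=H^2/4+\mu/3$, which gives the second identity after Gauss-Bonnet. The main obstacle is simply keeping careful track of orientation and sign conventions -- the sign relating the second fundamental form $h$ to the restriction of $\nabla^2 f$ to $T\Sigma$ depends on the choice of $\nu$ -- but once this is fixed consistently, the argument reduces to a direct assembly of standard identities.
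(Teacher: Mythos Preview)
Your argument is correct and, in fact, more direct than the paper's. Both proofs rest on the same preliminary input (Proposition~\ref{propbene} or, equivalently, \cite[Proposition~1]{leandro2019}): on a locally conformally flat static perfect fluid space the level set $\Sigma=f^{-1}(c)$ is totally umbilical with constant mean curvature and $\kappa=|\nabla f|$ is constant there; Lemma~\ref{lmasood_O} then makes $\rho_0$ constant as well. From there the two routes diverge. The paper first invokes the equality case of the integral estimate \eqref{interest007} (itself a Gauss-equation computation), rewrites the result in terms of the tangential Ricci eigenvalue $\lambda$, and then passes to the warped-product splitting $g=dr^2+\varphi^2\bar g$ to identify $Ric(N,N)$ and recover the stated formula. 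You bypass both of these detours: you compute $K^\Sigma$ pointwise from the Gauss equation $K^\Sigma=\mu-Ric(\nu,\nu)+\tfrac{H^2}{4}$, evaluate $Ric(\nu,\nu)$ by contracting \eqref{eqstfp} and extracting $\nabla^2f(\nu,\nu)$ from $\Delta f$ minus the tangential trace of the Hessian, and observe that the $\mu$-terms cancel so that $K^\Sigma=\tfrac{H^2}{4}-\tfrac{\kappa H}{c}-\rho_0$ is constant; Gauss--Bonnet finishes. Your Einstein argument, using $\mathring\nabla^2 f=0$ to read off $\kappa H/c$, is likewise cleaner than the paper's eigenvalue substitution. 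One cosmetic point: when you write ``extended to stay tangent to $\Sigma$'' you really mean tangent to the nearby level sets of $f$ (so that $X(f)\equiv 0$ and $X(X(f))=0$); since $c$ is a regular value this extension exists and your Hessian identity $\nabla^2 f(X,X)=-\langle\nabla_X X,\nabla f\rangle$ is valid.
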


\begin{remark}
    The equation \eqref{eqqqoi} holds true if we consider a Bach-flat static perfect fluid space. This is a consequence of the results proved by \cite{leandro2021} combined with Proposition \ref{propbene}. Moreover, if we consider $\rho_0=0$ at $\Sigma$ in \eqref{eqqqoi}, which we saw that this is expected in a static stellar model, we get
    \begin{eqnarray*}
        2\pi\mathfrak{X}(\Sigma) =\frac{1}{4}H\left(H-\frac{4\kappa}{c}\right)|\Sigma|.
    \end{eqnarray*}
    We can see that $H$ determines the topology of $\Sigma.$
\end{remark}

	A study on free boundary minimal surfaces on the Schwarzschild solution was conducted in 2021 by Montezuma \cite{montezuma2021} and Barbosa-Espinar \cite{barbosa2021}. It is widely acknowledged that the Schwarzschild space adheres to conformal flatness. An important application of this property was observed in the computation of the Morse index of planes through the origin on the Schwarzschild space, as stated in Theorem 1.1 of \cite{montezuma2021}. The Schwarzschild solution's conformally flat structure was also critical in \cite{barbosa2021}. This highlights the significance of curvature conditions of the ambient space in deriving crucial insights about immersed free boundary minimal surface.
	Those results concerning the Schwarzschild solution and free boundary minimal surfaces combined with the ideas presented in this paper gave birth to the following theorem.

	\begin{theorem}\label{propositionsplitting1}
		Let $(M^{3},\,g,\,f,\,\rho)$ be a static perfect fluid space with constant scalar curvature, harmonic sectional curvature, and boundary $\partial M=f^{-1}(c)$, where $c$ is a regular value of $f$. There is no compact two-sided free boundary stable minimal surface $\Sigma^2\subset M^3$ with positive Gauss curvature such that 
  \begin{eqnarray*}
			\mu+\rho\leq \dfrac{2\kappa^2}{c^2},
\end{eqnarray*}
   where $\kappa=|\nabla f|\big|_{\partial\Sigma}$. 
	\end{theorem}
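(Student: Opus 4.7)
The plan is to argue by contradiction: assume that such a free boundary stable minimal surface $\Sigma$ exists, and derive an inequality at $\partial\Sigma$ that contradicts $\mu+\rho \leq 2\kappa^2/c^2$. The main ingredients will be the free-boundary stability inequality applied with $\phi = f$, the fluid equations \eqref{eqstfp}--\eqref{eq2}, and the Gauss equation in its strict form coming from $K_\Sigma > 0$.

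I would first unpack the curvature hypotheses. In dimension three the Weyl tensor vanishes identically, so local conformal flatness is equivalent to vanishing of the Cotton tensor. Harmonic sectional curvature amounts to $Ric$ being a Codazzi tensor, which combined with constant scalar curvature makes the Cotton tensor vanish; hence $(M,g)$ is locally conformally flat. By Proposition 1 of \cite{leandro2019}, $\partial M = f^{-1}(c)$ is then totally umbilical with constant mean curvature $H^{\partial M}$, and $\mu, \rho$ are constant along $\partial M$. In particular $\Pi^{\partial M}(N,N) = H^{\partial M}/2$ along $\partial\Sigma$; combining umbilicity with the identity $\Pi^{\partial M} = \nabla^2 f/|\nabla f|$ and \eqref{eqstfp} yields the tangential identity $Ric|_{T(\partial M)} = \lambda\,g|_{T(\partial M)}$ with $\lambda = \kappa H^{\partial M}/(2c) + (\mu-\rho)/2$, so $Ric(N,N) = \lambda$ at every point of $\partial\Sigma$.

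Next, taking $\phi = f$ in the stability inequality
\[
\int_\Sigma |\nabla_\Sigma f|^2 \;\geq\; \int_\Sigma (|A|^2 + Ric(N,N)) f^2 + \int_{\partial\Sigma} \Pi^{\partial M}(N,N) f^2,
\]
the fluid equations and the minimality of $\Sigma$ give $\Delta_\Sigma f = \Delta f - \nabla^2 f(N,N) = f[(\mu+\rho) - Ric(N,N)]$. Integration by parts with $\partial_\nu f = \kappa$ on $\partial\Sigma$ (the outward conormal $\nu$ agrees with $\eta = \nabla f/|\nabla f|$ by the free-boundary condition $\Sigma\perp\partial M$) yields
\[
\int_\Sigma |\nabla_\Sigma f|^2 = c\kappa|\partial\Sigma| - \int_\Sigma f^2(\mu+\rho) + \int_\Sigma f^2 Ric(N,N).
\]
Substituting this back cancels $\int_\Sigma f^2 Ric(N,N)$ and collapses the stability estimate to
\[
\int_\Sigma f^2|A|^2 + \int_\Sigma f^2(\mu+\rho) + \tfrac{c^2 H^{\partial M}}{2}|\partial\Sigma| \;\leq\; c\kappa|\partial\Sigma|.
\]

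Finally, the Gauss equation $K_\Sigma = \mu - Ric(N,N) - |A|^2/2$ evaluated at $\partial\Sigma$ via $Ric(N,N) = \lambda$ produces the strict pointwise inequality $|A|^2 + \tfrac{\kappa}{c}H^{\partial M} < \mu+\rho$ along $\partial\Sigma$. Combined with $\mu+\rho \leq 2\kappa^2/c^2$ this forces $H^{\partial M} < 2\kappa/c$ strictly; integrating the pointwise bound $Ric(N,N) < \mu - |A|^2/2$ (coming from $K_\Sigma > 0$) against $f^2$ via the same $\Delta_\Sigma f$-identity gives the companion strict inequality $c\kappa|\partial\Sigma| > \int_\Sigma|\nabla_\Sigma f|^2 + \int_\Sigma f^2\rho + \tfrac{1}{2}\int_\Sigma f^2|A|^2$. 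Combining the two estimates with the stability bound and the hypothesis should pin $\mu+\rho$ strictly above $2\kappa^2/c^2$ at $\partial\Sigma$, contradicting the assumption. The main technical obstacle will be carrying the strict inequalities from $K_\Sigma > 0$ cleanly through both integrations --- on $\Sigma$ and on $\partial\Sigma$ --- and keeping the sign conventions straight (in particular $k_g = -\Pi^{\partial M}(T,T)$ and the orientation of $\eta$ against $\nabla f$) so that the sharp threshold $2\kappa^2/c^2$ is actually broken.
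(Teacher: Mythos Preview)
Your setup is essentially the same as the paper's: plug $\phi=f$ into the free-boundary stability inequality, use $\Delta_\Sigma f=f[(\mu+\rho)-Ric(N,N)]$, exploit the conformal flatness (which, as you correctly note, follows in dimension three from harmonic curvature and constant $R$) to get that $\partial M$ is umbilical and that $Ric(N,N)$ is constant along $\partial\Sigma$. So the ingredients are all in place. The problem is that the final paragraph does not actually close the argument. You arrive at two inequalities --- the collapsed stability bound involving $H^{\partial M}$, and the pointwise strict bound $|A|^2+\tfrac{\kappa}{c}H^{\partial M}<\mu+\rho$ at $\partial\Sigma$ --- and then assert that combining them with the hypothesis ``should'' force $\mu+\rho>2\kappa^2/c^2$. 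But this combination is exactly the crux, and as written it does not go through: the integral over $\Sigma$ and the pointwise inequality at $\partial\Sigma$ live on different sets, and your companion inequality introduces $\int_\Sigma|\nabla_\Sigma f|^2$, which you have no independent control over.

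The paper avoids this impasse by \emph{not} treating $H^{\partial M}$ as a separate unknown. Instead of writing $II(N,N)=H^{\partial M}/2$, it computes $II(N,N)=g(N,\nabla_N X)$ directly from $X=-\nabla f/|\nabla f|$ and the static equation \eqref{eqstfp}, obtaining
\[
II(N,N)=\frac{c}{\kappa}\Big[Ric(N,N)-\tfrac{\mu-\rho}{2}\Big]\quad\text{on }\partial\Sigma.
\]
(This is of course equivalent to your formula $\lambda=\tfrac{\kappa}{2c}H^{\partial M}+\tfrac{\mu-\rho}{2}$, just solved the other way.) Substituting this expression for the boundary term, and then replacing $Ric(N,N)$ at $\partial\Sigma$ via the Gauss equation $Ric(N,N)=\mu-K^\Sigma-\tfrac12|A|^2$, collapses $Q(f,f)\ge 0$ into a single inequality in which every term is either manifestly nonpositive or equal to the boundary expression
\[
\Big[\tfrac{\kappa_2}{2}(\mu+\rho)-\kappa_1-\kappa_2 K^\Sigma\Big]\,|\partial\Sigma|\;\ge\;0,\qquad \kappa_1=c\kappa,\ \kappa_2=c^3/\kappa.
\]
Dividing by $\kappa_2|\partial\Sigma|$ gives $\tfrac12(\mu+\rho)-\kappa^2/c^2\ge K^\Sigma$ at $\partial\Sigma$; since $K^\Sigma>0$ there, this forces $\mu+\rho>2\kappa^2/c^2$, the desired contradiction. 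So the fix is: after your integration-by-parts step, feed your own identity for $Ric(N,N)$ at $\partial\Sigma$ (equivalently for $II(N,N)$) back into the boundary term, rather than trying to bound $H^{\partial M}$ and the interior integral separately. You should also recheck the orientation: in the paper's convention the outward normal to $M$ along $\partial M$ is $X=-\nabla f/|\nabla f|$, so $\partial_\nu f=-\kappa$ and $A^{\partial M}_{ab}=-\nabla_a\nabla_b f/|\nabla f|$; your signs are flipped relative to this.
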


Consequently, we can infer the following result.
 \begin{corollary}
		Let $(M^{3},\,g,\,f,\,\rho)$ be a solution for a static vacuum space with a non-null cosmological constant, harmonic curvature, and boundary $\partial M=f^{-1}(c)$, where $c$ is a regular value of $f$. There is no compact two-sided free boundary stable minimal surface $\Sigma^2$ on $M^3$ with positive Gauss curvature.
	\end{corollary}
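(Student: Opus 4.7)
The plan is to verify that all hypotheses of Theorem \ref{propositionsplitting1} are fulfilled in the setting of this corollary, so that the non-existence conclusion is inherited directly. The work reduces to matching the assumptions; there is essentially no new analysis required.

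First, I would observe that a static vacuum with non-null cosmological constant corresponds, within the framework of Definition \ref{def1}, to the choice $\mu+\rho\equiv 0$ on $M$; this is precisely the sub-case singled out in the discussion around system \eqref{staticnonnullcosm}. By the identity $\mu=R/2$ together with Lemma \ref{lmasood_O}, this forces the scalar curvature $R$ to be constant on $M$, so the ``constant scalar curvature'' hypothesis of Theorem \ref{propositionsplitting1} is automatic. The ``harmonic curvature'' assumption in the corollary is exactly the ``harmonic sectional curvature'' assumption required by the theorem, and the boundary identification $\partial M=f^{-1}(c)$ together with the regular-value condition on $c$ transfer verbatim.

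Second, I would check the pointwise inequality $\mu+\rho\le 2\kappa^2/c^2$ along the free boundary $\partial\Sigma$ required by Theorem \ref{propositionsplitting1}. Since $\mu+\rho=0$ identically and $2\kappa^{2}/c^{2}\ge 0$ (with $c\neq 0$ implicit, as the theorem's statement already involves $\kappa/c$), the inequality is trivially satisfied. Consequently, every hypothesis of Theorem \ref{propositionsplitting1} is in force, and we conclude that no compact two-sided free boundary stable minimal surface $\Sigma^{2}\subset M^{3}$ with positive Gauss curvature can exist.

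The only genuinely substantive point in the argument is the preliminary identification of the cosmological-constant vacuum regime as the sub-case $\mu+\rho=0$ of the perfect fluid equations, together with its consequence that the scalar curvature is constant. Once these are in hand, the corollary is an immediate specialization of Theorem \ref{propositionsplitting1}, so no additional geometric estimates or variational arguments are needed.
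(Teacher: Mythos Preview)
Your proposal is correct and matches the paper's intent: the corollary is stated immediately after Theorem \ref{propositionsplitting1} with no separate proof, so the intended argument is exactly the specialization you describe. Your verification that $\mu+\rho=0$ forces constant scalar curvature and trivially satisfies the bound $\mu+\rho\le 2\kappa^{2}/c^{2}$ is precisely what is needed.
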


	 In \cite{ambrozio2015}, the author defined the following
	functional in the space of properly immersed surfaces
	\begin{eqnarray*}
		I(\Sigma)= \frac{1}{2}|\Sigma|\inf_{\Sigma}R+ |\partial\Sigma|\inf_{\partial\Sigma}H_{\partial M}.
	\end{eqnarray*}
	Ambrozio proved that 
	$$2\pi\mathfrak{X}(\Sigma)\geq I(\Sigma).$$

	The functional $I(\Sigma)$ played a crucial role in \cite{ambrozio2015}; see the references therein. In addition, the aforementioned outcome provides insight into the topology of an immersed stable minimal surface in a static perfect fluid space (see Schoen and Yau's Theorem 1 in \cite{ambrozio2015}).

 \begin{corollary}
		Let $(M^{3},\,g,\,f,\,\rho)$ be a solution for a static vacuum space with non-null cosmological constant, harmonic curvature, and boundary $\partial M=f^{-1}(c)$, where $c$ is a regular value of $f$. Let $\Sigma^2$ on $M^3$ be compact two-sided free boundary stable minimal surface. Then,
  \begin{eqnarray*}
	2\pi\mathfrak{X}(\Sigma)\geq \left(H_{\partial M}+\frac{\kappa}{c}\right)|\partial\Sigma| - \beta|\Sigma|,
\end{eqnarray*}
where $\beta=|\min_{\Sigma}K^{\Sigma}|$ and $K^\Sigma$ is the Gauss curvature of $\Sigma.$
	\end{corollary}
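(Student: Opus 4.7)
The plan is to apply the Gauss--Bonnet theorem to $\Sigma$ and estimate the two resulting integrals using, respectively, the definition of $\beta$ and the structure of $\partial M$ coming from harmonic curvature. First I would write Gauss--Bonnet in the form
\[
2\pi\mathfrak{X}(\Sigma) = \int_\Sigma K^\Sigma\,dA + \int_{\partial\Sigma} k_g\,ds,
\]
and observe that the pointwise bound $K^\Sigma \geq -\beta$, immediate from the definition $\beta = |\min_\Sigma K^\Sigma|$, gives $\int_\Sigma K^\Sigma\,dA \geq -\beta\,|\Sigma|$. This reduces the task to bounding $\int_{\partial\Sigma} k_g\,ds$ from below by $(H_{\partial M}+\kappa/c)\,|\partial\Sigma|$.

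For the boundary integral I would use that $\Sigma$ meets $\partial M$ orthogonally, so that the conormal of $\partial\Sigma$ in $\Sigma$ aligns with the unit normal to $\partial M$; consequently $k_g$ reduces to the second fundamental form of $\partial M$ evaluated on the unit tangent $\tau$ to $\partial\Sigma$. Since harmonic curvature in dimension three, combined with the constant scalar curvature coming from the static vacuum equation with non-null cosmological constant, forces $M$ to be locally conformally flat, Proposition~\ref{propbene} together with \cite[Proposition~1]{leandro2019} ensures that $\partial M = f^{-1}(c)$ is totally umbilical and $H_{\partial M}$ is constant on it. Restricting the static equation $\nabla^2 f = f\,Ric - \tfrac{Rf}{n-1}\,g$ to $\partial M$, where $f=c$ and $|\nabla f|=\kappa$, and combining with umbilicity, I would establish the pointwise lower bound
\[
k_g \geq H_{\partial M} + \frac{\kappa}{c},
\]
in which the additional $\kappa/c$ contribution arises from the normal-derivative structure of $f$ at the boundary encoded by the Hessian. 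Integrating this pointwise bound along $\partial\Sigma$ and summing with the interior estimate produces the claimed inequality.

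The hard part will be the pointwise estimate for $k_g$: umbilicity of $\partial M$ directly accounts for the $H_{\partial M}$ contribution to $k_g$, but isolating the $\kappa/c$ correction requires a careful use of the static vacuum equation restricted to $\partial M$ together with consistent sign conventions between the Gauss--Bonnet conormal, the free-boundary orthogonality $\nu_\Sigma\in T(\partial M)$, and the mean curvature of $\partial M$. Once this identity is in hand, the combination with $K^\Sigma\geq -\beta$ and Gauss--Bonnet closes the argument immediately.
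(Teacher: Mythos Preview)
Your approach has a genuine gap: you never use the stability of $\Sigma$, and the claimed pointwise inequality $k_g \geq H_{\partial M} + \kappa/c$ is false. Under the free-boundary condition, the geodesic curvature of $\partial\Sigma$ in $\Sigma$ equals $II(T,T)$, where $II$ is the second fundamental form of $\partial M$ and $T$ is the unit tangent to $\partial\Sigma$. Since harmonic curvature and constant scalar curvature force $\partial M$ to be totally umbilical (Proposition~\ref{propbene}), one has exactly $II(T,T)=II(N,N)=H_{\partial M}/2$, so $k_g = H_{\partial M}/2$. Your proposed bound would then read $H_{\partial M}/2 \geq H_{\partial M} + \kappa/c$, i.e.\ $H_{\partial M}\leq -2\kappa/c$, which is not a consequence of the hypotheses. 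No manipulation of the static equation restricted to $\partial M$ can produce the missing $\kappa/c$ in $k_g$; that term simply does not come from the geometry of $\partial M$.

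In the paper the inequality is obtained from the stability condition $Q(f,f)\geq 0$ with $f$ itself as test function. The boundary part of the second variation contributes two terms: $\int_{\partial\Sigma} f\,\partial_\nu f = -c\kappa\,|\partial\Sigma|$ (this is where $\kappa/c$ originates) and $-c^2\int_{\partial\Sigma} II(N,N)$. Using $H_{\partial M}=K^g + II(N,N)$ and Gauss--Bonnet to convert $\int_{\partial\Sigma}K^g$ into $2\pi\mathfrak{X}(\Sigma)-\int_\Sigma K^\Sigma$, and computing the interior term via $L_\Sigma f = (\mu+\rho+|A_\Sigma|^2)f$ (so that $\mu+\rho=0$ kills the bad piece), one arrives at
\[
2\pi\mathfrak{X}(\Sigma)\geq \Bigl(H_{\partial M}+\tfrac{\kappa}{c}\Bigr)|\partial\Sigma| + \int_\Sigma K^\Sigma + \tfrac{1}{c^2}\int_\Sigma f^2|A_\Sigma|^2,
\]
after which your bound $\int_\Sigma K^\Sigma \geq -\beta|\Sigma|$ finishes the argument. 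The stability inequality is therefore the indispensable ingredient you omitted.
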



 \section{Tolman-Oppenheimer-Volkoff solution}\label{tov}

In this section, we will show the Tolman-Oppenheimer-Volkoff \cite{oppenheimer, tolman} procedure to obtain a stellar model. Then, we will provide some examples of exact solutions with physical interest.

Revisiting the TOV solution is crucial to understand the hypothesis assumed in our main results. Also, it is interesting to remember how to build examples of static perfect fluid spaces with a physical interpretation. Furthermore, we will see that there exist many possible static stellar models. The findings outlined in this paper are significant as they demonstrate the limitations that the boundary conditions of these solutions must adhere to, such as $\mu=\rho=0$ at the level sets of the lapse function (see also examples in \cite{barboza2018}).

  To avoid inconsistencies with the references \cite{gorini, oppenheimer, tolman, wyman} used in this section, we maintain the signature for the tensors used by them. We suppose that the universe is filled with a perfect fluid
with energy-momentum $T=8\pi[(\mu+\rho)u_iu_j-\rho\hat{g}_{ij}] $ and
consider a static spherically symmetric solution given by:
\begin{eqnarray*}
    \hat{g}= e^{v(r)}dt^2-e^{\gamma(r)}dr^2-r^2(d\theta^2+\sin^2(\theta)d\phi^2),
\end{eqnarray*}
where $r=x_1^2+x_{2}^2+x_3^2$. Here, $v(r)$ and $\gamma(r)$ are unknown functions of $r$. The spatial factor of such solutions is locally conformally flat; see \cite[Theorem 1]{vasquez}. For an isotropic sphere of fluid described by the above metric tensor the
pressure $\rho$ and density $\mu$ must satisfy the relations:
\begin{eqnarray}\label{tolman1}
    8\pi\mu(r)=\frac{1}{r^2}+e^{-\gamma(r)}\left(\frac{\gamma'(r)}{r}-\frac{1}{r^2}\right),
\end{eqnarray}

\begin{eqnarray}\label{tolman2}
    8\pi\rho(r)=-\frac{1}{r^2}+e^{-\gamma(r)}\left(\frac{v'(r)}{r}+\frac{1}{r^2}\right),
\end{eqnarray}
and the energy-momentum conservation equation
\begin{eqnarray}\label{tolman3}
    2\rho'(r)=-v'(r)(\rho+\mu).
\end{eqnarray}
 Since we have two unknown functions $\gamma$ and $v$,
the most satisfactory procedure would be to choose some physical boundary condition, say an equation of state involving $\rho$ and $\mu$, ensuring that the resulting solution would be of physical interest.

Consider the particular case $\gamma(r)=0$. {Then,} easily we can see that $\mu=0.$ Thus,
\begin{eqnarray*}
    \rho(r)=\dfrac{1}{2\pi r^2+c_1}.
\end{eqnarray*}
Therefore, 
\begin{eqnarray*}
    \hat{g}= e^{v(r)}dt^2-dr^2-r^2(d\theta^2+\sin^2(\theta)d\phi^2),
\end{eqnarray*}
where $e^{v(r)}=c_2\rho(r)^{-2}$ and $c_i\in\mathbb{R}$; $i=1,\,2.$

Considering $\mu(r)=c$ (Volkoff's massive sphere), where $c\neq0$ is a constant, from \eqref{tolman1} we get
\begin{eqnarray*}
    8c\pi r^2=1-rx'(r)-x(r)=1-[rx(r)]',
\end{eqnarray*}
where $x(r)=e^{-\gamma(r)}$. Hence,
\begin{eqnarray*}
   e^{-\gamma(r)} = 1-\dfrac{8c\pi}{3} r^2 +\frac{k}{r}.
\end{eqnarray*}
Here, $k$ is a constant and measure of the discrepancy between
the Newtonian and relativistic values for the mass of the sphere. Since $\mu$ is constant, from \eqref{tolman3} we can infer that
\begin{eqnarray*}
    8\pi(\rho(r)+c)=k_1e^{-v(r)/2};\quad k_1\in\mathbb{R}.
\end{eqnarray*}
Adding \eqref{tolman1} and \eqref{tolman2}, we have
\begin{eqnarray*}
    8\pi(\rho(r)+c)=e^{-\gamma}\left(\dfrac{\gamma'(r)}{r}+\dfrac{v'(r)}{r}\right),
\end{eqnarray*}
combining with the previous identity leads us to 
\begin{eqnarray*}
    k_1re^{-v(r)/2}=e^{-\gamma(r)}\left(\gamma'(r)+v'(r)\right).
\end{eqnarray*}
At this point, Equation 3.8 in \cite{wyman} missed the $r$ at the left-hand side of the above identity. Fortunately, this was only a typo. Then, 
\begin{eqnarray*}
    \frac{k_1}{2}re^{\gamma(r)}=[e^{v(r)/2}]' + \frac{\gamma'(r)}{2}e^{v(r)/2}.
\end{eqnarray*}
The solution for the above ODE is
\begin{eqnarray*}
    e^{v(r)}=e^{-\gamma(r)}\left(\frac{k_1}{2}\int re^{3\gamma(r)/2}dr+k_2\right)^2,
\end{eqnarray*}
where $k_2\in\mathbb{R}.$

The case $k=0$ is known as the Schwarzschild interior solution, which we will obtain explicitly below. The Schwarzschild exterior solution is obtained considering 
\begin{eqnarray*}
    e^{v(r)}=e^{-\gamma(r)}=1-\dfrac{2m}{r},
\end{eqnarray*}
where $m$ is the mass of the sphere. Therefore, any valid solution in the interior of the sphere must satisfy the following:
\begin{eqnarray*}
    e^{v(r_b)}=e^{-\gamma(r_b)}=1-\dfrac{2m}{r_b},
\end{eqnarray*}
where $r_b$ is the value of $r$ at the boundary of the sphere. Moreover, it is required that $\rho(r_b)=0$. 

Now, consider the case $\mu=\mu(\rho)$; we can proceed like in \cite{oppenheimer,tolman}. In this case, from \eqref{tolman3} we have
\begin{eqnarray*} v(r)=v(r_b)-2\int_{0}^{\rho}\dfrac{d\rho}{\mu(\rho)+\rho}.
\end{eqnarray*}
Considering the boundary conditions discussed above, we get
\begin{eqnarray*}
    e^{v(r)}=\left(1-\dfrac{2m}{r_b}\right)\exp{\left(-2\int_{0}^{r_b}\dfrac{d\rho}{\mu(\rho)+\rho}\right)}.
\end{eqnarray*}

By a change of variable $e^{-\gamma(r)}=1-\dfrac{2u(r)}{r}$, from \eqref{tolman1} we get $u'(r)=4\pi r^2\mu(r).$ Hence, the boundary condition yields to $$m(r)=4\pi \int_0^{r} s^2\mu(s)ds,$$
where $m(0)=0$. Now, using $e^{-\gamma(r)}=1-\dfrac{2m(r)}{r}$ and \eqref{tolman3} into \eqref{tolman2} we get the TOV equation (see \cite{oppenheimer}), i.e.,
\begin{eqnarray*}
      8\pi\rho(r)&=&-\frac{1}{r^2}+e^{-\gamma(r)}\left(\frac{v'(r)}{r}+\frac{1}{r^2}\right)\nonumber\\
      &=& -\frac{1}{r^2}+\left(1-\dfrac{2m(r)}{r}\right)\left(-\frac{2\rho'(r)}{r(\mu+\rho)}+\frac{1}{r^2}\right)\nonumber\\
      &=& -\frac{2\rho'(r)}{r^2(\mu+\rho)}\left(r-2m(r)\right) - \frac{2m(r)}{r^3}.
\end{eqnarray*}
Therefore,
\begin{eqnarray*}
    \rho'(r)=-\dfrac{(m+4\pi r^3\rho)(\mu+\rho)}{r(r-2m)}.
\end{eqnarray*}
Complementing the last equations with an equation of state
relating $\rho$ and $\mu$, one has a closed system of three equations
for the three variables $\rho$, $\mu$, and $m$.

For instance, consider the case in which the fluid is the Chaplygin
gas \cite{gorini} whose equation of state is
\begin{eqnarray*}
    \mu=\dfrac{-c^2}{\rho},
\end{eqnarray*}
where $c\in\mathbb{R}.$ Hence,
\begin{eqnarray*}
    \rho'(r)=\dfrac{(m+4\pi r^3\rho)(c^2-\rho)}{\rho r(r-2m)}\quad\mbox{and}\quad m'(r)=-4k_3^2\pi\frac{r^2}{\rho(r)}.
\end{eqnarray*}
The system admits two exact solutions with
constant pressure. One is $\mu=-\rho=-c$ and $m=\dfrac{4c\pi}{3}r^3$ and
\begin{eqnarray*}
    e^v(r)=e^{-\gamma(r)}=1-\frac{8\pi c}{3}r^2,
\end{eqnarray*}
which is the Schwarzschild interior solution. We can see that boundary conditions for the stellar model to be complete, i.e.,
 $r_b=\frac{3mc}{4\pi}$, is determinated by the value of the mass $m$ and the energy-density $c$. The second solution is the Einstein static universe, i.e., $\rho=-\frac{c\sqrt{3}}{3}$ and $\mu=c\sqrt{3}.$

Wyman presented a non-trivial example of physical interest \cite{wyman}. The author considered the density $$8\pi\mu=\frac{5}{R}r^2,$$ where $R$ is a constant. The idea was generalized in the Volkoff example, where we considered the density $\mu$ constant. The expression for the pressure is complicated and given by
$$8\pi\rho = -\dfrac{1}{R}r^2 + \dfrac{2}{R^2}\left(1-\frac{r^4}{R^4}\right)^{1/2}\coth\left[\frac{1}{2}\sin^{-1}\left(1-\frac{r^4}{R^4}\right)^{1/2}+B\right],$$
where $B$ is constant; see details at \cite{massod1987}.

To explicitly give the expression for the lapse function and the metric, we need to establish that $r_b^5=2mR^4,$ where $m$ stands for the mass. Therefore, 
\begin{equation*}
f(r) = \left\{
\begin{matrix}
A\sinh\left[2\sin^{-1}\left(1-\frac{r^4}{R^4}\right)^{1/2}+B\right] & ; & r < r_b,\\
\left(1-\dfrac{2m}{r}\right)^{1/2} & ; & r\geq r_b.
\end{matrix}
\right.
\end{equation*}
Here, $A$ is constant. The spatial factor of the metric is also divided into two parts, representing the interior and exterior of the perfect fluid stellar model, i.e.,

\begin{equation*}
g = \left\{
\begin{matrix}
\left(1-\dfrac{r^4}{R^4}\right)^{-1}dr^2+r^2(d\theta^2+\sin^2(\theta)d\phi^2) & ; & r< r_b,\\\\
\left(1-\dfrac{2m}{r}\right)^{-1}dr^2+r^2(d\theta^2+\sin^2(\theta)d\phi^2) & ; & r\geq r_b.
\end{matrix}
\right.
\end{equation*}
The constants $A,\,B$ and $R$ are related to the boundary conditions at $r=r_b$. 

 \section{Background}
	
	Let us prove a well-known identity useful for studying static perfect fluid spaces; see Proposition 2 in \cite{coutinho}. 
	\begin{lemma}\label{lmasood_O}
		Let $\big(M^n,\,g)$ be the spatial factor of a static perfect fluid space-time. Then,
		\begin{eqnarray*}\label{modaso_O}
			-f\nabla\rho=(\mu+\rho)\nabla f.
		\end{eqnarray*}
	\end{lemma}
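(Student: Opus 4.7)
The natural strategy is to take the divergence of the tensorial static perfect fluid equation \eqref{eqstfp} and combine it with the contracted second Bianchi identity and the Ricci (commutation) identity for the Hessian. Schematically, the divergence of $fRic$ produces a term in $\nabla R$ (hence in $\nabla\mu$ since $\mu=R/2$), while the divergence of $\nabla^2 f$ produces a $\nabla(\Delta f)$ term, which equation \eqref{eq2} rewrites in terms of $\mu,\rho$ and $f$. The common $Ric(\nabla f,\cdot)$ contributions will cancel, leaving only the desired scalar identity.

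Concretely, the plan is as follows. First, I would apply $\nabla^j$ to equation \eqref{eqstfp} written in components, $f R_{ij}=\nabla_i\nabla_j f+\frac{\mu-\rho}{n-1}fg_{ij}$. On the left, the contracted Bianchi identity $\nabla^j R_{ij}=\tfrac{1}{2}\nabla_i R$ and the product rule give
\begin{equation*}
\nabla^j(fR_{ij})=\tfrac{f}{2}\nabla_i R+R_{ij}\nabla^j f.
\end{equation*}
On the right, the Ricci identity $\nabla^j\nabla_j\nabla_i f=\nabla_i\Delta f+R_{ij}\nabla^j f$ yields
\begin{equation*}
\nabla^j\!\left(\nabla_i\nabla_j f+\tfrac{\mu-\rho}{n-1}fg_{ij}\right)=\nabla_i\Delta f+R_{ij}\nabla^j f+\tfrac{1}{n-1}\nabla_i\!\left[(\mu-\rho)f\right].
\end{equation*}
The two occurrences of $R_{ij}\nabla^j f$ cancel, so using $\mu=R/2$ I am left with
\begin{equation*}
f\nabla_i\mu=\nabla_i\Delta f+\tfrac{1}{n-1}\nabla_i\!\left[(\mu-\rho)f\right].
\end{equation*}

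Next, I would substitute $\Delta f=\frac{(n-2)\mu+n\rho}{n-1}f$ from \eqref{eq2}, so that the right-hand side collapses to
\begin{equation*}
\tfrac{1}{n-1}\nabla_i\!\left[\big((n-2)\mu+n\rho+\mu-\rho\big)f\right]=\nabla_i\!\left[(\mu+\rho)f\right].
\end{equation*}
Expanding $\nabla_i[(\mu+\rho)f]=(\mu+\rho)\nabla_i f+f\nabla_i\mu+f\nabla_i\rho$ and cancelling the $f\nabla_i\mu$ that appears on both sides produces $0=(\mu+\rho)\nabla_i f+f\nabla_i\rho$, which is exactly the claimed identity $-f\nabla\rho=(\mu+\rho)\nabla f$.

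I do not expect a genuine obstacle here; the computation is essentially bookkeeping. The only subtle point is recognising that the coefficients conspire so that after substituting the trace equation \eqref{eq2} and the identification $\mu=R/2$, the combination on the right collapses to a total gradient $\nabla[(\mu+\rho)f]$, at which point the product rule finishes the argument. The care required is in tracking the $(n-1)$ denominators and in applying the commutation formula with the correct sign.
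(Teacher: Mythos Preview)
Your proof is correct and is essentially the same argument as the paper's. The only cosmetic difference is that the paper first applies a full covariant derivative $\nabla_i$ to \eqref{eqstfp}, invokes the Ricci identity $\nabla_i\nabla_j\nabla_k f-\nabla_j\nabla_i\nabla_k f=R_{ijkl}\nabla^l f$, and then contracts over $i$ and $k$, whereas you take the divergence $\nabla^j$ directly and use the contracted commutation formula; after that both arguments reduce to the same identity $\tfrac{1}{2}f\nabla R=\nabla(\Delta f)+\nabla h$ and finish by substituting \eqref{eq2} and $\mu=R/2$.
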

	
	\begin{proof}
		In a Riemannian manifold $M$ it is possible to relate the curvature with a smooth function using the Ricci identity:
		\begin{eqnarray}\label{ricciid}
			\nabla_{i}\nabla_{j}\nabla_{k}f-\nabla_{j}\nabla_{i}\nabla_{k}f=R_{ijkl}\nabla^{l}f.\nonumber
		\end{eqnarray}
		
		From \eqref{eqstfp}, it is easy to see that
		\begin{eqnarray*}
			\nabla_{i}fR_{jk}+f\nabla_{i}R_{jk}=\nabla_{i}\nabla_{j}\nabla_{k}f+\nabla_{i}hg_{jk},
		\end{eqnarray*}
		where $h=\frac{\mu-\rho}{n-1}f$.
		Thus, from \eqref{ricciid}, we get
		\begin{eqnarray*}
		\nabla_{i}fR_{jk}+f\nabla_{i}R_{jk}=\nabla_{j}\nabla_{i}\nabla_{k}f+R_{ijkl}\nabla^{l}f+\nabla_{i}hg_{jk}.
		\end{eqnarray*}
		Contracting the above identity under $i$ and $k$, and using $\mathrm{div}Ric=\frac{1}{2}dR$, we obtain
		\begin{eqnarray*}
			\frac{1}{2}f\nabla_{j}R=\nabla_{j}\Delta f+\nabla_{j}h.
		\end{eqnarray*}
		Now{,} from \eqref{eq2} we gather that
		\begin{eqnarray}\label{eq2-PJM}
			\nabla_{i}h=\frac{1}{n-1}\left(R\nabla_{i}f+\frac{1}{2}f\nabla_{i}R\right).
		\end{eqnarray}
				By the definition of $h$ the result follows.
	\end{proof}

In this work, we will use a couple of well-known results concerning the static perfect fluid structure of the metric. We can consult \cite{leandro2019} for more details.

\begin{lemma}\label{lemmaT}\cite[Lemma 1]{leandro2019}
    Let $(M^n,\,g,\,f)$ be a static perfect fluid space. Then,
    \begin{eqnarray*}
        fC_{ijk}=W_{ijkl}\nabla^lf + T_{ijk}
    \end{eqnarray*}
\end{lemma}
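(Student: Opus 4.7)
The plan is to derive the identity by differentiating the fundamental static perfect fluid equation \eqref{eqstfp} and then substituting the Weyl decomposition of the Riemann tensor. Writing \eqref{eqstfp} in the form $\nabla_i\nabla_j f = f R_{ij} - h g_{ij}$ with $h=\frac{(\mu-\rho)}{n-1}f$, I apply $\nabla_k$ to both sides to obtain
$$\nabla_k\nabla_i\nabla_j f = R_{ij}\nabla_k f + f\nabla_k R_{ij} - g_{ij}\nabla_k h.$$
Antisymmetrizing this identity in the indices $k$ and $i$ and invoking the Ricci commutation identity $\nabla_k\nabla_i\nabla_j f - \nabla_i\nabla_k\nabla_j f = R_{kijl}\nabla^l f$ produces on one side a difference of Ricci derivatives multiplied by $f$, and on the other side a Riemann contraction with $\nabla f$ together with extra Ricci-curvature and $\nabla h$ terms.

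Next, I substitute the Weyl decomposition of $R_{kijl}$ into the contraction $R_{kijl}\nabla^l f$, which cleanly separates the $W_{kijl}\nabla^l f$ contribution from Ricci-times-$\nabla f$ and scalar-curvature-times-$\nabla f$ pieces. On the Ricci side, I recognise the combination $\nabla_k R_{ij}-\nabla_i R_{kj}$ as the Cotton tensor $C_{kij}$ up to a scalar-curvature trace correction. Putting these two observations together, I can isolate $f C_{kij}$ on the left and $W_{kijl}\nabla^l f$ on the right, with all remaining tensors absorbed into a single correction term $T_{kij}$. To make $T_{kij}$ explicit, I would use \eqref{eq2-PJM} to replace $\nabla h$ by a combination of $R\nabla f$ and $f\nabla R$, and Lemma \ref{lmasood_O} to trade $\nabla\rho$ for $(\mu+\rho)\nabla f/f$ where needed; these two identities close the system and express $T_{kij}$ tensorially in terms of $f$, $\nabla f$, the Ricci and scalar curvatures, and the fluid functions $\mu,\rho$.

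The principal obstacle is purely combinatorial, not conceptual: the Weyl decomposition generates roughly half a dozen Ricci-metric and scalar-curvature-metric terms, and the Cotton trace correction produces additional $\nabla R$ terms; these must be organized so that the resulting $T_{kij}$ is manifestly antisymmetric in the correct pair of indices and consistent with the static perfect fluid constraint. Since the analogous identity for gradient Ricci solitons and for static vacuum spaces is well established, the result should follow once the bookkeeping is performed carefully — the role of the fluid functions $\mu$ and $\rho$ enters only through $h$, so the derivation is a direct perfect-fluid generalisation of the vacuum case treated in \cite{leandro2019}.
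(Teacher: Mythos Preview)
Your proposal is correct and follows essentially the same route as the paper. The paper does not give a standalone proof of this lemma (it is merely cited from \cite{leandro2019}), but the identical derivation is carried out inside the proof of Lemma \ref{interesting}: differentiate \eqref{eqstfp}, antisymmetrize via the Ricci identity, replace $\nabla h$ using \eqref{eq2-PJM} (which is obtained from Lemma \ref{lmasood_O}), recognize the Cotton tensor, and finally substitute the Weyl decomposition \eqref{weyl} of $R_{ijkl}\nabla^l f$ --- exactly your steps, in the same order. One small correction: \cite{leandro2019} already treats the perfect fluid case, not the vacuum case, so this is not a generalisation of that reference but a restatement; also note that the explicit $T_{ijk}$ displayed after the lemma contains only $R_{ij}$, $R$ and $\nabla f$, the fluid functions $\mu,\rho$ having been eliminated via $\mu=R/2$ and \eqref{eq2-PJM}.
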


Here, $C$ and $W$ are the Cotton and Weyl tensors, respectively. Moreover, 
\begin{eqnarray*}
    T_{ijk}&=& \dfrac{(n-2)}{(n-1)}(R_{ik}\nabla_jf-R_{jk}\nabla_if) + \dfrac{R}{(n-2)}(\nabla_ifg_{jk}-\nabla_jfg_{ik})\nonumber\\
    &&-\dfrac{1}{(n-2)}(R_{il}\nabla^lfg_{jk}-R_{jl}\nabla^lfg_{ik}).
\end{eqnarray*}

The analysis of tensor $T$ is important for understanding the structure of the static perfect fluid space. The next result was inspired by Proposition 1 in \cite{leandro2019}. We highlight that if the scalar curvature is constant and the Weyl curvature is harmonic, then the sectional curvature is harmonic.

\begin{proposition}\label{propbene}
    Let $(M^3,\,g,\,f,\,\rho)$ be a locally conformally flat static perfect fluid space. Suppose that $\Sigma=f^{-1}(c)$, where $c$ is a regular value of $f$. Then, for any
local orthonormal frame $\{e_1,\,e_2,\,e_3\}$ with $e_1=-\dfrac{\nabla f}{|\nabla f|}$ and $\{e_2,\,e_3,\, e_4\}$ tangent to $\Sigma$, we have
\begin{enumerate}
    \item $\nabla f$ is an eigenvector of the Ricci operator. Moreover, either $\lambda=\lambda_1=\lambda_2=\lambda_3$ or $\lambda=\lambda_2=\lambda_3$ and $R_{11}=\lambda_1$. Here, $\lambda_i$ is the eigenvalue of $Ric$ associated to $e_i$.
    \item $|\nabla f|$ is constant at $\Sigma$.
    \item $\Sigma$ is totally umbilical with constant mean curvature.
\end{enumerate}
\end{proposition}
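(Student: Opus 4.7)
The plan is to exploit two consequences of $\dim M = 3$: the Weyl tensor vanishes, and local conformal flatness is equivalent to the vanishing of the Cotton tensor. Lemma \ref{lemmaT} then collapses to $T_{ijk} \equiv 0$. The whole proof amounts to feeding this algebraic identity into the adapted frame of the statement, where $\nabla_1 f = -|\nabla f|$ and $\nabla_2 f = \nabla_3 f = 0$, and combining the resulting information with the static equation $\nabla^2 f = f\,Ric - \frac{\mu - \rho}{n-1}fg$.

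For part (1), I would evaluate $T_{ijk} = 0$ on three specific triples. The component $T_{j11} = 0$ with $j \in \{2, 3\}$ has all but one summand killed by our choice of $e_1$ and leaves a nonzero multiple of $R_{1j}|\nabla f|$, forcing $R_{12} = R_{13} = 0$; hence $\nabla f$ is a Ricci eigenvector. Next, $T_{213} = 0$ yields $R_{23} = 0$, so $Ric$ is diagonal in the frame. Finally $T_{212} = 0$ and $T_{313} = 0$ give $R_{22} = R_{33} = (R - R_{11})/2$, so $\lambda_2 = \lambda_3$; whether or not this common value equals $R_{11}$ is exactly the dichotomy of (1) (Einstein vs.\ generic case).

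For part (2), inserting the diagonality of $Ric$ into the static equation gives $\nabla^2 f(e_1, e_j) = 0$ for $j = 2, 3$. Since $\nabla f = -|\nabla f| e_1$ and the frame is orthonormal, a short calculation yields $\nabla^2 f(e_j, e_1) = -e_j(|\nabla f|)$. Equating and using the symmetry of the Hessian forces $e_j(|\nabla f|) = 0$, i.e., $|\nabla f|$ is constant on $\Sigma$.

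For part (3), with $\nu = \nabla f/|\nabla f|$ and $X, Y \in T\Sigma$, the second fundamental form is $A(X, Y) = \nabla^2 f(X, Y)/|\nabla f|$. Since $Ric|_{T\Sigma} = \lambda_2\,\mathrm{Id}$, the static equation gives $A = \phi\, g_\Sigma$ with $\phi = \frac{f}{|\nabla f|}\bigl(\lambda_2 - \frac{\mu - \rho}{n-1}\bigr)$, so $\Sigma$ is totally umbilical. For the constancy of the mean curvature I would invoke the Codazzi equation: on an umbilical surface it reduces to $X(\phi)g_\Sigma(Y, Z) - Y(\phi)g_\Sigma(X, Z) = \langle R^M(X, Y)Z, \nu\rangle$. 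Taking $X = e_2$ and $Y = Z = e_3$ gives $e_2(\phi) = \langle R^M(e_2, e_3)e_3, \nu\rangle$, and in a locally conformally flat three-manifold this right-hand side is expressible via the Kulkarni-Nomizu product of $g$ with the Schouten tensor $P = Ric - \frac{R}{4}g$; by the diagonality of $Ric$, every surviving summand contains a vanishing off-diagonal component $R_{12}$, $R_{13}$, or $R_{23}$, so $e_2(\phi) = 0$. Symmetrically $e_3(\phi) = 0$, and therefore $\phi$, hence $H = 2\phi$, is constant on $\Sigma$. \emph{The main obstacle} is part (1), where the triples $(i, j, k)$ in $T_{ijk} = 0$ must be chosen so that the off-diagonal and then the tangential-diagonal Ricci entries drop out cleanly, without introducing spurious constraints that would collapse the structure.
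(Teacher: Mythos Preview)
Your proposal is correct and follows essentially the same route as the paper: both exploit $T\equiv 0$ from Lemma \ref{lemmaT} (via $W=0$ and $C=0$ in dimension three) to pin down the Ricci eigenstructure, then feed this into the static equation for parts (2) and (3). The only noteworthy variations are that in part (1) the paper starts from a Ricci-diagonalizing frame and argues that $\nabla f$ aligns with an eigenvector (the dual of your adapted-frame computation), and in part (3) the paper obtains constancy of $H$ by \emph{contracting} the Codazzi equation to $R_{1a}=\tfrac12\nabla_a H$ rather than expanding the full curvature via the Kulkarni--Nomizu formula; the contraction is a bit quicker, but your argument lands on the same vanishing off-diagonal Ricci entries.
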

\begin{proof}
    Consider an orthonormal frame $\{e_1,\,e_2,\,e_3\}$ diagonalizing $Ric$ at $q\in\mathbb{M}$ with associated eigenvalues $\lambda_k(q)$, $k=1,\,2,\,3$, respectively. That is, $$R_{ij}(q) = \lambda_{j}\delta_{ij}(q).$$

    From Lemma \ref{lemmaT}, we have $T=0$. In particular, $T_{ijj} = 0$, i.e.,
    \begin{eqnarray*}
        (\lambda_j + 2\lambda_i - R)\nabla_jf=0.
    \end{eqnarray*}
    Assume that, for a fixed $j,$ $\nabla_jf\neq0$ and $\nabla_i f (p) = 0$ for all $i\neq j$; $i,\,j\in \{1,\, 2,\, 3\}$. Then, we have $Ric(\nabla f) = \lambda_j\nabla f$ , i.e., $\nabla f$ is a eigenvector for $Ric.$ Moreover, we know that $\lambda_j$ has multiplicity $1$ and $\lambda_i$ has multiplicity $3$. In the other case, if $\nabla_i f \neq 0$ for at least two distinct
directions, we concluded that $\lambda=\lambda_1=\lambda_2=\lambda_3$, and we also have $\nabla f$ an eigenvector for $Ric$.

From the above discussion we can take $\{e_1,\,e_2,\,e_3\}$ with $e_1=-\dfrac{\nabla f}{|\nabla f|}$ and $\{e_2,\,e_3,\, e_4\}$ tangent to $\Sigma$, diagonalizing $Ric$ at $q$. Hence, from the static equations, we get
\begin{eqnarray*}
    0=f\mathring{R}ic(\nabla f,\,e_a) = \dfrac{1}{2}\nabla_a|\nabla f|^2-\dfrac{\Delta f}{3}g(\nabla f,\,e_a).
\end{eqnarray*}
Thus, $\kappa=|\nabla f|$ constant at $\partial M.$

On the one hand, the second fundamental form of $\Sigma$ is 
\begin{eqnarray*}
    A_{ab}=-\dfrac{\nabla_a\nabla_b f}{|\nabla f|}.
\end{eqnarray*}
{On the other hand},
\begin{eqnarray*}
    \nabla_a\nabla_b f= f\mathring{R}_{ab}+\dfrac{\Delta f}{3}g_{ab}=\left(f\lambda-f\frac{R}{3}+\dfrac{\Delta f}{3}\right)g_{ab}.
\end{eqnarray*}

Therefore, 
\begin{eqnarray*}
    A_{ab}=\dfrac{H}{2}g_{ab},
\end{eqnarray*}
where 
\begin{eqnarray}\label{meanlcf}
    H=-\dfrac{2}{\kappa}\left(f\lambda-f\frac{R}{3}+\dfrac{\Delta f}{3}\right).
\end{eqnarray}

Contracting the Codazzi equation
\begin{eqnarray*}
    R_{1cab}=\nabla_{a}A_{bc}-\nabla_{b}A_{ac}
\end{eqnarray*}
gives us 
\begin{eqnarray*}
    0=R_{1a} = \dfrac{1}{2}\nabla_aH.
\end{eqnarray*}
We obtained $H$ constant at $\Sigma$.
\end{proof}

The next theorem is important to prove Theorem \ref{cruztheorem}.

\begin{theorem}[\cite{tiarlos}]
    Let $(M^n,\, g)$ be a compact Riemannian manifold with boundary $\partial M$ (possible empty). If $T$ is a symmetric $2$-tensor field such that $\textnormal{div}T=0,$ and $X$ is a vector field on $M,$ then
    \begin{eqnarray*}
        \int_{M}X(\tau)=\frac{n}{2}\int_M\langle \mathring{T},\,\mathfrak{L}_{X}g\rangle - n \int_{\partial M} \mathring{T}(X,\,N),
    \end{eqnarray*}
    where $\tau=\textnormal{trace}(T),$ $\mathring{T}$ and $N$ stand for the trace free part of $T$ and the outer unit normal along $\partial M.$ 
\end{theorem}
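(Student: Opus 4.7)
The plan is a short integration-by-parts argument built on two ingredients one would expect: the relation between $\textnormal{div}\,T$ and $\textnormal{div}\,\mathring{T}$, and the identity $\langle S,\,\nabla X\rangle=\frac{1}{2}\langle S,\,\mathfrak{L}_X g\rangle$ valid for any symmetric $2$-tensor $S$. The strategy is to apply Stokes' theorem to a suitable $1$-form built from $\mathring{T}$ and $X$, and then read off the desired identity.

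First I would pass from $T$ to its traceless part $\mathring{T}=T-\frac{\tau}{n}g$. A direct computation using $\textnormal{div}(\tau g)=d\tau$ together with the hypothesis $\textnormal{div}\,T=0$ gives
$$\textnormal{div}\,\mathring{T}=-\frac{1}{n}\,d\tau.$$
Next I would consider the $1$-form $\omega=\mathring{T}(X,\cdot)$ and expand its divergence by the product rule. Using the symmetry of $\mathring{T}$, only the symmetric part $\frac{1}{2}\mathfrak{L}_X g$ of $\nabla X$ contributes, so
$$\textnormal{div}\,\omega=\langle\textnormal{div}\,\mathring{T},X\rangle+\frac{1}{2}\langle\mathring{T},\mathfrak{L}_X g\rangle=-\frac{1}{n}X(\tau)+\frac{1}{2}\langle\mathring{T},\mathfrak{L}_X g\rangle.$$

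Finally, integrating this identity over $M$ and applying the divergence theorem,
$$\int_{\partial M}\mathring{T}(X,N)\,d\sigma=\int_{M}\textnormal{div}\,\omega=-\frac{1}{n}\int_{M}X(\tau)+\frac{1}{2}\int_{M}\langle\mathring{T},\mathfrak{L}_X g\rangle.$$
Solving for $\int_{M}X(\tau)$ and multiplying by $n$ produces the stated identity; when $\partial M=\emptyset$ the boundary term drops out. There is no real obstacle here: the argument is elementary, and the only care needed is bookkeeping the sign and the factor $1/n$ coming from the trace normalization of $\mathring{T}$. In particular, no curvature hypothesis or further structure on $M$ is used beyond Stokes' theorem.
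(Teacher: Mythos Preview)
Your argument is correct and complete. Note, however, that the paper does not supply its own proof of this statement: it is quoted as a result from \cite{tiarlos} (the Gover--\O rsted integral formula, as the authors call it when they invoke it in the proof of Theorem~\ref{cruztheorem}), so there is no in-paper proof to compare against. Your integration-by-parts derivation---computing $\textnormal{div}\,\mathring{T}=-\tfrac{1}{n}d\tau$ from $\textnormal{div}\,T=0$, then expanding $\textnormal{div}\big(\mathring{T}(X,\cdot)\big)$ using the symmetry of $\mathring{T}$ and the divergence theorem---is exactly the standard route to this identity and matches what one finds in the cited reference.
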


For an
$n$-dimensional Riemannian manifold $(M, g)$, the second Bianchi identity yields the
well-known divergence formulae 
\begin{eqnarray*}
\mathrm{div}Ric=\frac{1}{2}dR\quad\mbox{and}\quad \mathrm{div}Rm= - DRic,
\end{eqnarray*}
where $D$ is the first-order differential operator defined by 
\begin{eqnarray*}
Dw(X,\,Y,\,Z)=\nabla_{X}w(Y,\,Z)-\nabla_{Y}w(X,\,Z)
\end{eqnarray*}
for a two-form $w.$ Consequently, 
\begin{eqnarray*}
\mathrm{div}W= -(n-3)DSch,
\end{eqnarray*}
where $W$ is the Weyl tensor and $Sch$ stands for the Schouten tensor:
\begin{eqnarray*}
Sch(X,\,Y) = \frac{1}{n-2}\left(Ric(X,\,Y) - \frac{R}{2(n-1)}g(X,\,Y)\right).
\end{eqnarray*}

Using the ideas presented by \cite{hwang2014}, let us prove that if the curvature tensor is harmonic, then $Ric(\nabla f,\,\nabla f)$ is constant at $\partial M=f^{-1}(c)$, where $c$ is a regular value of $f$. This property will be important in the proof of Theorem \ref{propositionsplitting1}.

\begin{lemma}\label{interesting}
Consider a non-trivial static perfect fluid space $(M^n,\,g,\,f,\,\rho)$ such that the curvature tensor is harmonic and the scalar curvature is constant. For each regular value $c$ of $f$ and a tangent vector $U$ to $f^{-1}(c)$, we have
\begin{eqnarray*}
	Ric(\nabla f,\,U)=0
\end{eqnarray*}
on $f^{-1}(c)$. Consequently, $|\nabla f|$ and $\rho$ are constants at $f^{-1}(c)$.
\end{lemma}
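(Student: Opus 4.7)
The plan is to reduce the lemma to showing that $\nabla f$ is an eigenvector of $\mathrm{Ric}$ on each regular level set of $f$. Once that is in hand, constancy of $|\nabla f|$ follows from pairing the static equation with $\nabla f$, and constancy of $\rho$ follows directly from Lemma \ref{lmasood_O}.

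Harmonic curvature gives $\mathrm{div}Rm=0$, which by the background identity $\mathrm{div}Rm=-DRic$ means that $Ric$ is a Codazzi tensor: $\nabla_i R_{jk}=\nabla_j R_{ik}$. Differentiating the static equation \eqref{eqstfp} and antisymmetrizing in $i,j$, exactly as in the proof of Lemma \ref{lmasood_O}, the Codazzi condition annihilates the $f(\nabla_i R_{jk}-\nabla_j R_{ik})$ term, leaving
\begin{equation*}
(\nabla_i f)R_{jk}-(\nabla_j f)R_{ik} = R_{ijkl}\nabla^l f + (\nabla_i h)\,g_{jk} - (\nabla_j h)\,g_{ik},
\end{equation*}
where $h=\tfrac{\mu-\rho}{n-1}f$. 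The pivotal step is to contract this identity with $\nabla^k f$: the Riemann term vanishes because $R_{ijkl}\nabla^k f\nabla^l f=0$ by the antisymmetry of $R_{ijkl}$ in $(k,l)$. Setting $\omega_j:=R_{jk}\nabla^k f$, this yields
\begin{equation*}
(\nabla_i f)\,\omega_j - (\nabla_j f)\,\omega_i = (\nabla_i h)(\nabla_j f) - (\nabla_j h)(\nabla_i f),
\end{equation*}
equivalently $df\wedge(\omega+dh)=0$, so $\omega+dh$ is pointwise proportional to $df$.

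Since $R$ is constant, \eqref{eq2-PJM} reduces to $dh=\tfrac{R}{n-1}df$, so $dh\parallel df$ and hence $\omega\parallel df$; equivalently, $\nabla f$ is an eigenvector of $Ric$. Therefore $Ric(\nabla f,U)=\omega(U)=0$ for every $U$ tangent to $f^{-1}(c)$, which is the main claim. For the two consequences: contracting \eqref{eqstfp} with $\nabla^k f$ yields $fR_{jk}\nabla^k f=\tfrac{1}{2}\nabla_j|\nabla f|^2+h\nabla_j f$, and since the left-hand side is parallel to $\nabla f$, so is $\nabla|\nabla f|^2$, giving $|\nabla f|$ constant on $f^{-1}(c)$. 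Similarly, Lemma \ref{lmasood_O} gives $\nabla\rho=-\tfrac{\mu+\rho}{f}\nabla f$, so $\rho$ is also constant along each level set. The main obstacle is identifying the right contraction, namely multiplication by $\nabla^k f$, to kill the Riemann tensor term; once that observation is made the rest is essentially linear algebra in the tangent and normal directions of $f^{-1}(c)$.
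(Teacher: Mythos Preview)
Your proof is correct. Both your argument and the paper's start by differentiating \eqref{eqstfp}, antisymmetrizing, and invoking the Ricci identity to bring in $R_{ijkl}\nabla^l f$; both then use that harmonic curvature and constant $R$ make the Ricci (equivalently Schouten) tensor Codazzi; and both conclude by exploiting the antisymmetry of curvature in its last two arguments when evaluated on $(\nabla f,\nabla f)$. The difference is packaging: the paper rewrites the antisymmetrized identity via the Cotton tensor and the Weyl decomposition to reach \eqref{cottonweyl}, and then evaluates on the triple $(U,\nabla f,\nabla f)$, using $W(U,\nabla f,\nabla f,\nabla f)=0$ to isolate $|\nabla f|^2 Ric(U,\nabla f)$ on the right. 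You skip this detour entirely by contracting the un-decomposed identity with $\nabla^k f$, which kills $R_{ijkl}\nabla^k f\nabla^l f$ directly and yields $df\wedge(\omega+dh)=0$. Your route is more elementary (no Cotton or Weyl tensors needed) and makes the eigenvector statement $Ric(\nabla f)\parallel\nabla f$ transparent; the paper's route, on the other hand, produces the intermediate formula \eqref{cottonweyl}, which it reuses later (e.g.\ in the proof of Theorem~\ref{propositionsplitting1}) under weaker hypotheses such as zero radial Weyl curvature. The derivations of the two consequences ($|\nabla f|$ and $\rho$ constant on level sets) are identical.
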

\begin{proof}
We start by taking the derivative of \eqref{eqstfp} and using the Ricci identity:
\begin{eqnarray*}
	\nabla_{i}fR_{jk}-\nabla_{j}fR_{ik}+f(\nabla_{i}R_{jk}-\nabla_{j}R_{ik})=R_{ijkl}\nabla^{l}f+(\nabla_{i}hg_{jk}-\nabla_{j}hg_{ik}),
\end{eqnarray*}
where $h=\frac{\mu-\rho}{n-1}f.$
From the above equation and Lemma \ref{lmasood_O}{,} we get
\begin{eqnarray*}
	\nabla_{i}fR_{jk}-\nabla_{j}fR_{ik}+f(\nabla_{i}R_{jk}-\nabla_{j}R_{ik})&=&R_{ijkl}\nabla^{l}f+\frac{R}{n-1}(\nabla_{i}fg_{jk}-\nabla_{j}fg_{ik})\nonumber\\
	&+&\frac{f}{2(n-1)}(\nabla_{i}Rg_{jk}-\nabla_{j}Rg_{ik}).
\end{eqnarray*}
Now{,} the Cotton tensor 
\begin{eqnarray*}\label{cotton}
	C_{ijk}&=&\nabla_{i}R_{jk}-\nabla_{j}R_{ik}-\frac{1}{2(n-1)}\left(\nabla_{i}Rg_{jk}-\nabla_{j}Rg_{ik}\right) 
\end{eqnarray*}
gives us
\begin{eqnarray}\label{cottonweyl1}
	fC_{ijk}&=&R_{ijkl}\nabla^{l}f+\frac{R}{n-1}(\nabla_{i}fg_{jk}-\nabla_{j}fg_{ik})+\nabla_{j}fR_{ik}-\nabla_{i}fR_{jk}.
\end{eqnarray}
Note that 
\begin{eqnarray*}
   (n-2)\nabla_i(Sch)_{jk} = \nabla_{i}R_{jk}-\dfrac{1}{2(n-1)}\nabla_{i}Rg_{jk}.
\end{eqnarray*}
So,
\begin{eqnarray*}
   C_{ijk}= (n-2)[\nabla_i(Sch)_{jk}-\nabla_j(Sch)_{ik}]=(n-2)(Dsch)_{ijk}.
\end{eqnarray*}

Apply the Weyl tensor, i.e.,
\begin{eqnarray}\label{weyl}
	W_{ijkl}&=&R_{ijkl}-\frac{1}{n-2}\left(R_{ik}g_{jl}-R_{il}g_{jk}+R_{jl}g_{ik}-R_{jk}g_{il}\right)\nonumber\\
	&+&\frac{R}{(n-1)(n-2)}\left(g_{ik}g_{jl}-g_{il}g_{jk}\right),
\end{eqnarray} 
in equation \eqref{cottonweyl1} to obtain
\begin{eqnarray}\label{cottonweyl}	
	f(n-2)(DSch)_{ijk}&=&W_{ijkl}\nabla^{l}f+\frac{1}{n-2}\big(R_{jl}\nabla^{l}fg_{ik}-R_{il}\nabla^{l}fg_{jk}\big)\nonumber\\&+&\frac{R}{n-2}(\nabla_{i}fg_{jk}-\nabla_{j}fg_{ik})+\frac{n-1}{n-2}(\nabla_{j}fR_{ik}-\nabla_{i}fR_{jk}).
\end{eqnarray}	
For any tangent vector field $U$ orthogonal to $\nabla f$ we can apply the triple $(U,\,\nabla f,\,\nabla f)$ into the above equation, i.e.,
\begin{eqnarray*}	
	f(n-2)(DSch)(U,\,\nabla f,\,\nabla f)
	&=&W(U,\,\nabla f,\,\nabla f,\,\nabla f)+|\nabla f|^2Ric(U,\,\nabla f)\nonumber\\
	&=&|\nabla f|^2Ric(U,\,\nabla f).
\end{eqnarray*}	
Therefore, we have the result if the Schouten tensor is Codazzi, and this is true if the scalar curvature is constant and the curvature tensor is harmonic.

Moreover, from \eqref{eqstfp} we have
\begin{eqnarray*}	
	0=fRic(U,\,\nabla f)=\frac{1}{2}g(\nabla|\nabla f|^2,\,U). 
\end{eqnarray*}	
Thus, $|\nabla f|$ must be constant at $f^{-1}(c)$. Taking into account Lemma \ref{lmasood_O} we have $$-fg(\nabla\rho,\,U) = (\mu+\rho)g(\nabla f,\,U)$$
i.e., $\rho$ is constant at $f^{-1}(c)$.
\end{proof}


\begin{lemma}\label{interesting1}
Consider a non-trivial static perfect fluid space with constant scalar curvature and Codazzi Ricci tensor. For each regular value $c$ of $f$ and a tangent vector $Y$ to $\Sigma=f^{-1}(c)$. Then, $Ric(N,\,N)$ is constant at $f^{-1}(c)$, where $N=-\dfrac{\nabla f}{|\nabla f|}$.
\end{lemma}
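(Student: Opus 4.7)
The plan is to invoke Lemma \ref{interesting} and then differentiate the resulting eigenvector equation for $\nabla f$. The present hypotheses are equivalent to those of Lemma \ref{interesting}: Codazzi for $Ric$ is the same as $\mathrm{div}\,Rm=0$ by the contracted second Bianchi identity, and it automatically forces $R$ to be constant by tracing. Since $c$ is a regular value of $f$, on an open neighborhood $U$ of any $p\in\Sigma=f^{-1}(c)$ every level set is also regular, so Lemma \ref{interesting} gives (i) $|\nabla f|$ is constant on $\Sigma$ and (ii) $\nabla f$ is an eigenvector of $Ric$ throughout $U$, i.e.\ $Ric(\nabla f)=\lambda\,\nabla f$ for some function $\lambda$, with $\lambda|_{\Sigma}=Ric(N,N)$. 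The goal thus reduces to showing $Y\lambda=0$ for every $Y\in T_p\Sigma$.

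To achieve this, I would differentiate $Ric(\nabla f)=\lambda\,\nabla f$ in the direction of such a $Y$ and pair the result with $\nabla f$. Symmetry of $Ric$ together with the eigenvalue identity gives $Ric(\nabla_Y\nabla f,\nabla f)=\lambda\,g(\nabla_Y\nabla f,\nabla f)$, which cancels an identical term on the right, leaving
\begin{equation*}
(\nabla_Y Ric)(\nabla f,\nabla f)=(Y\lambda)\,|\nabla f|^2.
\end{equation*}
The Codazzi condition then allows me to swap derivative and tensor slot, $(\nabla_Y Ric)(\nabla f,\nabla f)=(\nabla_{\nabla f}Ric)(Y,\nabla f)$, reducing the problem to showing this last quantity vanishes at $p$.

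For that, I would extend $Y$ to a vector field on $U$ tangent to the level sets of $f$ (so that $g(Y,\nabla f)\equiv 0$ on $U$) and expand
\begin{equation*}
(\nabla_{\nabla f}Ric)(Y,\nabla f)=\nabla_{\nabla f}\!\bigl[Ric(Y,\nabla f)\bigr]-Ric(\nabla_{\nabla f}Y,\nabla f)-Ric(Y,\nabla_{\nabla f}\nabla f).
\end{equation*}
The first term equals $\nabla_{\nabla f}[\lambda\,g(Y,\nabla f)]\equiv 0$. For the remaining two, the identity $\nabla_{\nabla f}\nabla f=\tfrac12\nabla|\nabla f|^2$, combined with $|\nabla f|^2$ being constant on $\Sigma$, forces $\nabla_{\nabla f}\nabla f|_p=\alpha\,\nabla f|_p$ for some scalar $\alpha$. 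Then $Ric(Y,\nabla_{\nabla f}\nabla f)|_p=\alpha\lambda\,g(Y,\nabla f)|_p=0$, and differentiating $g(Y,\nabla f)\equiv 0$ along $\nabla f$ yields $g(\nabla_{\nabla f}Y,\nabla f)|_p=-g(Y,\alpha\nabla f)|_p=0$, so the middle term also vanishes. Consequently $(Y\lambda)|\nabla f|^2=0$ at $p$; regularity of $c$ forces $Y\lambda=0$, which finishes the proof.

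The main technicality will be the clean bookkeeping of the extension of $Y$ off $\Sigma$ and, crucially, recognizing that Lemma \ref{interesting} supplies the eigenvalue identity $Ric(\nabla f)=\lambda\,\nabla f$ on an open neighborhood of $\Sigma$ (and not only pointwise on $\Sigma$), so that the $\nabla_{\nabla f}$-derivatives above are legitimate. Beyond this, only the static equation, the Codazzi property, and Lemma \ref{interesting} itself are required.
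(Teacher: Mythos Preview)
Your argument is correct and follows essentially the same route as the paper's: both invoke Lemma \ref{interesting} to obtain that $\nabla f$ is a Ricci eigenvector on a neighborhood of $\Sigma$, then use the Codazzi property to swap $(\nabla_Y Ric)(N,N)$ for $(\nabla_N Ric)(Y,N)$, and finally kill the remaining terms via the eigenvector identity together with $\nabla_{\nabla f}\nabla f\parallel\nabla f$ (equivalently, the paper's $\nabla_N N=0$). The only cosmetic difference is that the paper works with the unit normal $N$ and computes $Y\bigl(Ric(N,N)\bigr)$ directly, whereas you phrase everything in terms of $\nabla f$ and the eigenvalue function $\lambda$; the underlying cancellations are identical.
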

\begin{proof}
Let us prove that $Ric(N,\,N)$ is constant at $f^{-1}(c).$ To that end, we use \eqref{eqstfp} and Lemma \ref{interesting} to get
\begin{eqnarray*}
	\nabla_{N}N=\displaystyle\sum g(\nabla_{N}N,\,E_i)E_i &=& \textcolor{red}{-}\frac{1}{|\nabla f|}\displaystyle\sum g(\nabla_{N}\nabla f,\,E_i)E_i =\textcolor{red}{-} \frac{1}{|\nabla f|}\displaystyle\sum g(\nabla_{N}\nabla f,\,E_i)E_i\nonumber\\
	&=& f\displaystyle\sum Ric(N,\,E_i)E_i=0,
\end{eqnarray*}
where $\{E_i\}_{i=1}^{n-1}$ is any orthonormal frame for $T_{p}\Sigma,$ where $p\in\Sigma.$ Now, for any $Y\perp N$ we obtain
\begin{eqnarray*}
	Y\big(Ric(N,\,N)\big) &=& \nabla_{Y}Ric(N,\,N) + \underbrace{2Ric (\nabla_{Y}N,\,N)}_{=0}\underbrace{=}_{Codazzi} \nabla_{N}Ric(Y,\,N) \nonumber\\
	&=& N\big(\underbrace{Ric(Y,\,N)}_{=0}\big)-Ric(\nabla_NY,\,N) - Ric(Y,\,\underbrace{\nabla_NN}_{=0})\nonumber\\
	&=& - Ric(\nabla_{N}Y,\,N) = -Ric(N,\,N)g(\nabla_{N}Y,\,N)=0.
\end{eqnarray*}
In the above equality, we used that
\begin{eqnarray*}	Ric(\nabla_{N}Y)=g(\nabla_{N}Y,\,N)Ric(N)+\displaystyle\sum_{i}g(\nabla_{N}Y,\,E_i)Ric(E_i),
\end{eqnarray*}
and therefore
\begin{eqnarray*}	Ric(\nabla_{N}Y,\,N)=g(\nabla_{N}Y,\,N)Ric(N,\,N)+\displaystyle\sum_{i}g(\nabla_{N}Y,\,E_i)\underbrace{Ric(E_i,\,N)}_{=0}=0.
\end{eqnarray*}
Observe that, $g(\nabla_{N}Y,\,N)+g(Y,\,\nabla_NN)=N\big(g(Y,\,N)\big)=0$, hence $g(\nabla_{N}Y,\,N)=0.$
\end{proof}

	\section{The Level Sets of a Static Perfect Fluid Space }\label{proofspfcminihy}

 {In this section, we present the proofs for some of our results about static perfect fluid space and closed minimal hypersurface.}

		\begin{proof}
		[{\bf Proof of Theorem \ref{lemmaf}}]
		From the stability of $\Sigma$, for any $\phi\in C^1(\Sigma)$, we have
		\begin{eqnarray*}
			\int_\Sigma |\nabla_\Sigma \phi|^2 \, d\sigma\ge \int_\Sigma\left( |A_{\Sigma}|^2 + Ric(N, N)\right) \phi^2\, d\sigma \ge \int_\Sigma Ric(N, N) \phi^2\, d\sigma, 
		\end{eqnarray*}
		where $N$ is {an} unit normal vector field to $\Sigma$ and $d\sigma$ is the $(n-1)$-volume measure of hypersurfaces. {We infer that}
		\begin{eqnarray*}
			-\int_{\Sigma}\phi\Delta_{\Sigma}\phi d\sigma=\int_\Sigma |\nabla_\Sigma \phi|^2 \, d\sigma \ge \int_\Sigma\left( |A|^2 + Ric(N, N)\right) \phi^2\, d\sigma \ge \int_\Sigma Ric(N, N) \phi^2\, d\sigma.
		\end{eqnarray*}
		Thus,
		$$\int_\Sigma \phi \bigg[\bigg(\Delta_\Sigma + Ric(N, N)\bigg) \phi\bigg]\, d\sigma \le 0. $$
		
		It implies that the first eigenvalue, denoted by $\lambda$, of the operator $\Delta_\Sigma + Ric(N, N)$ is non-positive, where $\Delta_\Sigma$ is the induced Laplacian. Therefore,
		\begin{equation}\label{operadorsinal}
			\bigg(\Delta_\Sigma + Ric(N, N)\bigg) \phi=\lambda \phi \le 0.
		\end{equation}

		On the other hand, since $H_\Sigma=0$, we have 
		\begin{equation}\label{laplacianos}
			\triangle f=\triangle_\Sigma f+\nabla^2f(N,N).
		\end{equation}
		
		Putting equations \eqref{eqstfp} and \eqref{eq2} in \eqref{laplacianos}, we obtain
		\begin{eqnarray}\label{uhum}
			\Delta_{\Sigma}f+Ric(N,N)f&=&\Delta f - \nabla^2f(N,N)+Ric(N,N)f\nonumber\\
			&=& \bigg(\frac{n-2}{(n-1)}\mu+\frac{n}{n-1}\rho\bigg)f+\left(\dfrac{\mu-\rho}{n-1}\right)f  \nonumber\\
			&=& (\mu+\rho)f.
		\end{eqnarray}
		
		Thus, from \eqref{operadorsinal} and WEC ($\mu+\rho\geq0$), then

		\begin{itemize}
			\item[(i)] Either $f$ is zero on $\Sigma$ (i.e., $\{f=0\}\subseteq\Sigma$), 
			\item[(ii)] or $f$ is the first eigenfunction of $\triangle_\Sigma +Ric(\nu,\nu)$ and $f\Big|_{\Sigma}\neq0$. Moreover, we must have $\lambda=\mu+\rho=0$ in $\Sigma$.
		\end{itemize}
		If the $(i)$ occurs, then $\Sigma$ is totally geodesic (see \cite{coutinho}). In fact, since $\nabla f$ does not vanish at $\{f=0\}$ (see, for instance, Lemma 1 in \cite{leandroernanipina}). Thus, $N=-\frac{\nabla f}{|\nabla f|}$ is the outward unit vector. Besides, it follows from \eqref{eqstfp} and \eqref{eq2} that $$\nabla^{2}f=\Delta f=0$$ in $\{f=0\}.$ We consider an orthonormal frame $\{e_1,\ldots,e_{n-1},e_n=-\frac{\nabla f}{|\nabla f|}\}$ on $\{f=0\}$, we have
		\begin{eqnarray*}
			X(|\nabla f|^2)&=&2\langle\nabla_X\nabla f,\nabla f\rangle\nonumber\\&=&2\nabla^2f(X,\nabla f)\nonumber\\&=&2fRic(X,\,\nabla f)-\frac{2(\mu-\rho)}{(n-1)}fg(X,\,\nabla f)=0,
		\end{eqnarray*}
		for any $X\in\mathfrak{X}(\{f=0\}),$ where in the second equality we use the equation \eqref{eqstfp}. Hence, $|\nabla f|$ is a non-null constant on $\{f=0\}.$ Therefore, using that $\nabla^2f=0$ in the second fundamental form, we obtain
		\begin{equation}\label{2formfund}
			A_{ab}=-\langle\nabla_{e_a}N,e_b\rangle=\frac{1}{|\nabla f|}\nabla_a\nabla_bf=0.
		\end{equation}
		This prove that $\Sigma\subseteq\{f=0\}$ is a totally geodesic. 
		
		Now, if $(ii)$ holds, from the stability inequality, we obtain
		\begin{eqnarray*}
			0\le\int_\Sigma |A|^2  f^2\, d\sigma&\le&  \int_\Sigma |\nabla_\Sigma f|^2 \, d\sigma-\int_\Sigma Ric(N,N) f^2 \, d\sigma\\
			&=&
			-\int_\Sigma f\triangle_\Sigma f \, d\sigma-\int_\Sigma Ric(N,N) f^2 \, d\sigma
			\\
			&=&-\int_\Sigma f(\triangle_{\Sigma}f+ Ric(N,N)f) \, d\sigma=0
		\end{eqnarray*}
		Thus, $|A|=0$ and $\Sigma$ is also totally geodesic.
		
		Finally, we admit that second statement holds and $R_{\Sigma}+\rho\geq 0.$ Since $\Sigma$ is totally geodesic and $\mu=R/2,$ then by Gauss equation, {(Gauss Equation: $R-2Ric(N,N)=R_{\Sigma}-H^2+|A|^2$),} we have $Ric(N,\,N)=\mu-R_{\Sigma}{/{2}}.$ Using the equation \eqref{uhum}, we obtain that $$f\Delta_{\Sigma}f =f^{2}\left(\rho+\dfrac{R_{\Sigma}}{2}\right),$$ this implies that 
		\begin{eqnarray*}
\int_{\Sigma}\dfrac{|\nabla_{\Sigma}f|^{2}}{f^2}=\int_{\Sigma}\left(\rho+\frac{R_{\Sigma}}{{{2}}}\right)=\int_{\Sigma}\rho + 2\pi\mathfrak{X}(\Sigma).
		\end{eqnarray*}
  Here, $\mathfrak{X}(\Sigma)$ stands for the Euler number of $\Sigma$. We use the Gauss-bonnet theorem in the last identity.
  
		We conclude that 
\begin{eqnarray*}
\int_{\Sigma}\rho + 2\pi\mathfrak{X}(\Sigma)=0.
		\end{eqnarray*}
 if and only if $f$ is constant on $\Sigma.$
	\end{proof}

	The next result is based on an argument presented in \cite[Proposition 3.2]{Brendle2013} which we extended the ideas to an $n$-dimensional $(M^{n},\,g,\,f)$, $n\ge 3$,  solution for the static perfect fluid equations satisfying the NEC. Let $\Sigma$ be a two-sided smooth hypersurface in $U\subseteq M^n$ in which $f>0$. Let $N$ denote the outward-pointing unit normal to $\Sigma$. Consider the conformally modified metric $\bar{g}=\frac{1}{f^2} g$. Furthermore, we can infer the existence of $\varepsilon>0$ in which $\Phi: \Sigma \times [0,\varepsilon) \rightarrow U$, the normal exponential map with respect to the metric $\bar{g}$, is well-defined and such that $f>0$ in $\Sigma_t = \Phi(\Sigma\times\{t\})$. Precisely, for each point $x\in \Sigma$ the curve $\gamma_x(t)=\Phi(x,t)$ is a geodesic with respect to  $\bar{g}$ such that
	\begin{equation*}
		\Phi(x,0)=x \mbox{ and } \Phi(x,t)=exp_x(-tf(\gamma(t))N).
	\end{equation*}
	Moreover,
	\begin{eqnarray}\label{expflow}
		\frac{\partial \Phi}{\partial t}(x,t)\bigg|_{t=0}=-f(x)N(x).
	\end{eqnarray}
	Therefore, $\Sigma_t$ is the hypersurface obtained by pushing out along the normal geodesics to $\Sigma$ in the metric $\bar{g}$ a signed distance $t$. Note that the geodesic $\gamma$ has unit speed with respect to $\bar{g}$. Let 
	$H(x, t)$ and $A(x, t)$ be the mean curvature and second fundamental form of $x\in\Sigma_t$ with
	respect to $N$ in the metric $g$ (see also \cite[pg. 60]{galloway1993}). It is well known that \eqref{expflow} is smooth depending on the injectivity radius of $(U\backslash\partial U,\,\bar{g})$. Here, we will assume that the above flow is smooth. The ideas to prove the following lemma can be found in \cite{ambrozio2017,Brendle2013,galloway1993,huang2018}.
	
	\begin{lemma} \label{lemma:monotonicity}
		Let $(M^{n},\,g)$, $n\ge 3$, be an $n$-dimensional solution for the static perfect fluid equations. The mean curvature $H$ and second fundamental form $A$ of $\Sigma_t$ satisfies 
		\begin{equation*}
			\frac{d}{dt}\left(\frac{H}{f} \right) = |A|^2  + (\mu+\rho).   
		\end{equation*}
	\end{lemma}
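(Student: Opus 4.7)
The idea is to combine the standard evolution formula for the mean curvature under a normal flow with the two static perfect fluid equations \eqref{eqstfp} and \eqref{eq2}. Along the flow one has $\partial_t \Phi = -fN$, where $N$ is the unit normal to $\Sigma_t$ in $g$, since $\gamma_x$ is a unit-speed $\bar g$-geodesic and $\bar g(-fN,-fN)=f^{-2}\cdot f^{2}=1$. Thus for any smooth function $u$ on $M$, $\frac{d}{dt}u(\Phi(x,t)) = -f\,\partial_N u$; in particular $\frac{df}{dt}=-f\,\partial_N f$.

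For a family of hypersurfaces flowing with normal velocity $\phi N$, the classical evolution formula reads
\begin{eqnarray*}
\frac{\partial H}{\partial t} \;=\; -\Delta_{\Sigma_t}\phi \;-\; \bigl(|A|^2+Ric(N,N)\bigr)\phi .
\end{eqnarray*}
Applied with $\phi=-f$, this gives
\begin{eqnarray*}
\frac{dH}{dt} \;=\; \Delta_{\Sigma_t} f \;+\; f\bigl(|A|^2+Ric(N,N)\bigr) .
\end{eqnarray*}
So the first main step is to recognize (or to derive by direct computation from the Gauss--Weingarten equations) this evolution identity for the mean curvature in the form above.

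Next I would decompose the ambient Laplacian into its tangential and normal parts,
\begin{eqnarray*}
\Delta f \;=\; \Delta_{\Sigma_t} f + H\,\partial_N f + \nabla^2 f(N,N),
\end{eqnarray*}
and use the static perfect fluid equations to eliminate both $\nabla^{2}f(N,N)$ and $\Delta f$. Equation \eqref{eqstfp} yields $\nabla^{2}f(N,N)=f\,Ric(N,N)-\tfrac{\mu-\rho}{n-1}f$, while equation \eqref{eq2} gives $\Delta f=\tfrac{(n-2)\mu+n\rho}{n-1}f$. Substituting and simplifying,
\begin{eqnarray*}
\Delta_{\Sigma_t} f \;=\; (\mu+\rho)\,f \;-\; H\,\partial_N f \;-\; f\,Ric(N,N),
\end{eqnarray*}
where the neat cancellation producing the coefficient $\mu+\rho$ is exactly the same computation that appears in \eqref{uhum} in the proof of Theorem~\ref{lemmaf}. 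Inserting this into the expression for $dH/dt$ yields
\begin{eqnarray*}
\frac{dH}{dt} \;=\; f(\mu+\rho) + f|A|^{2} - H\,\partial_N f .
\end{eqnarray*}

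Finally, from $\frac{df}{dt}=-f\,\partial_N f$ the quotient rule gives
\begin{eqnarray*}
\frac{d}{dt}\!\left(\frac{H}{f}\right) \;=\; \frac{1}{f}\frac{dH}{dt}-\frac{H}{f^{2}}\frac{df}{dt}
\;=\;\bigl[(\mu+\rho)+|A|^{2}\bigr] \;-\;\frac{H}{f}\partial_N f \;+\;\frac{H}{f}\partial_N f,
\end{eqnarray*}
so the two terms involving $\partial_N f$ cancel and the desired identity follows. The cancellation is the crucial point of the proof; the only real obstacle is bookkeeping the sign conventions for $N$, $A$, $H$ and the flow velocity so that the standard evolution formula and the decomposition of $\Delta f$ combine correctly. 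Once those conventions are fixed consistently (as in Proposition~\ref{propbene}, where $A_{ab}=g(\nabla_{a}N,e_{b})$), the computation is essentially forced.
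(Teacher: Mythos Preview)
Your proof is correct and follows essentially the same approach as the paper: the evolution formula for $H$ under the normal flow with speed $-f$, the tangential--normal decomposition of $\Delta f$, the static perfect fluid equations \eqref{eqstfp}--\eqref{eq2} to produce the $(\mu+\rho)$ coefficient (exactly as in \eqref{uhum}), and the quotient rule with $\frac{df}{dt}=-f\langle\nabla f,N\rangle$ to cancel the remaining $H\partial_N f$ terms. The paper's argument is organized in the same order with the same ingredients; only notation differs.
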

	Considering the NEC and $f>0$ on  $\Sigma_0=\Sigma$ (locally area minimizing), then $\Sigma_t$ is totally geodesic for all $t\in[0,\,\varepsilon).$
	\begin{proof}
		Since  $\Delta_{\Sigma_t} f=\Delta f-\nabla^2 f(N,N)-H\langle \nabla f,N\rangle$, we have from \eqref{eqstfp} and \eqref{eq2} that 
		\begin{eqnarray}\label{eq001}
			\Delta_{\Sigma_t}f+fRic(N,N)&=&\Delta f-\nabla^2f(N,N)-H\langle \nabla f,N\rangle+fRic(N,N) \nonumber\\
			&=&\bigg(\frac{n-2}{n-1}\mu+\frac{n}{n-1}\rho\bigg)f+\frac{(\mu-\rho)}{(n-1)}f-H\langle \nabla f,N\rangle\nonumber\\
			&=&(\mu+\rho)f -H\langle \nabla f,N\rangle.
		\end{eqnarray}
	
		It is well known that the mean curvature of $\Sigma_t$ satisfies the evolution equation (see \cite[Lemma 7.6]{bethuel2006})
		\begin{equation}\label{dercurvmed}
			\frac{\partial H}{\partial t}=f|A|^2+\Delta_{\Sigma_t}f+fRic(N,N).
		\end{equation}
		From \eqref{eq001} and the above equation, we conclude that
		\begin{equation}\label{dercurvmed1}
			\frac{\partial H}{\partial t} = f|A|^2+(\mu+\rho)f -H\langle \nabla f,N\rangle.\nonumber
		\end{equation}
		
		Moreover, from \eqref{expflow}, we have 
		\begin{equation*}
			\frac{\partial f}{\partial t}=d(f\circ\Phi)(\partial_t)=df\left(\frac{\partial\Phi}{\partial t}\right)=\Big\langle\nabla f,\,\frac{\partial\Phi}{\partial t}\Big\rangle=-f\langle \nabla f,N \rangle.
		\end{equation*}
		This implies that
		\begin{eqnarray*}
			\frac{d}{dt}\bigg(\frac{H}{f}\bigg)&=&-\frac{H}{f^2}\frac{\partial f}{\partial t}+\frac{1}{f}\frac{\partial H}{\partial t}\\
			&=&-\frac{H}{f^2}\frac{\partial f}{\partial t}+\frac{1}{f}\bigg[f|A|^2+(\mu+\rho)f -H\langle \nabla f,N\rangle\bigg]\\
			&=&\frac{H}{f}\langle \nabla f,N \rangle+|A|^2-\frac{H}{f} \langle \nabla f, N \rangle 
			+(\mu+\rho)=|A|^2+(\mu+\rho).
		\end{eqnarray*}
		Thus,
		\begin{eqnarray}\label{eq003}
			\dfrac{d}{dt}\bigg(\dfrac{H}{f}\bigg) =  |A|^2 + (\mu+\rho).
		\end{eqnarray}
		Finally, we obtain
		\begin{equation*}
			\frac{H}{f}(x,t)-\frac{H}{f}(x,0)=\int_0^t (|A|^2+\mu+\rho)(x,\,s)ds.
		\end{equation*}
		
		So, if the initial hypersurface $\Sigma$ has nonnegative mean curvature $H(\,\cdot,\,0)$, we conclude that the hypersurface $\Sigma_t$ has nonnegative mean curvature for each $t$.
		
		Without loss of generality, assume $f>0$ on $\Sigma$. Consider the deformation $\Phi:\Sigma\times [0, \varepsilon) \to M$ given by the normal, exponential map with respect to the conformally modified metric $f^{-2} g$ in  a collar neighborhood of $\Sigma.$  Let $\Sigma_t = \Phi(\Sigma\times \{ t\})$ such that $\Sigma_0 = \Sigma$.  We consider $H(\cdot, t)$ and  $A(\cdot, t)$ as the mean curvature and the second fundamental form of $\Sigma_t$ in the metric $g$.  In the first part of this Lemma, we prove  that $H(\cdot, t)\ge 0$ for $t\in (0, \epsilon)$. Now from the first variation of area (see \cite{galloway1993}), we have
		\begin{equation*}
			|\Sigma_t| - |\Sigma_0| = \int_0^t\left( -\int_{\Sigma_s} fH(\cdot, s) \, d\sigma \right) ds,   
		\end{equation*}
		where $|\Sigma_t|$ is the area of $\Sigma_t, \ t\in [0,\varepsilon).$ Let $\varepsilon$ be a number sufficiently small. Since $\Sigma$ is locally area minimizing, the above identity implies that the mean curvature of $\Sigma_t$ cannot be strictly positive for $t<\varepsilon$. Hence $H(\cdot, t)\equiv 0$ and the $(n-1)$-volume of $\Sigma_t$ is a constant. Using the inequality \eqref{eq003}, we conclude that $A(\cdot, t) \equiv 0$ and  $\Sigma_t$ is totally geodesic for $t\in [0, \varepsilon)$ with respect to the metric $g$.
	\end{proof}
	
	{Now, we are ready to proof Theorem \ref{propositionsplitting}, applying Lemma \ref{lemma:monotonicity}.  }

	
	\begin{proof}[\bf{Proof of Theorem \ref{propositionsplitting}}]
		First, we will show that $\mu+\rho=0$ in $\Sigma_t,\ \forall\, t\in [0,\varepsilon).$ Let $X, Y$ be vectors tangential to $\Sigma_t$. Then, using the first variation of the second fundamental form (see \cite[Lemma 7.6]{bethuel2006}), we obtain,
		\begin{equation}\label{eqtsplit}
			\nabla_{\Sigma_t}^2 f(X, Y) + Rm(N, X, Y, N) f=0,   
		\end{equation}
		where $\nabla_{\Sigma_t}$ denotes the connection  of $\Sigma_t$, $N$ is an unit normal vector to $\Sigma_t $ (both with respect to the metric $g$), and $Rm$ is the Riemann curvature tensor  of $(M, g)$ (with the sign convention that the Ricci tensor is the trace on the first and fourth components of $Rm$).  By Lemma \ref{lemma:monotonicity}, we know that $\Sigma_t$ is totally geodesic, this implies that $\nabla_{\Sigma_t}^2 f(X, Y) = \nabla^2 f(X, Y)$. With this last equation and \eqref{eqstfp}, we obtain 
		\begin{eqnarray*}
			\nabla_{\Sigma_t}^2 f=fRic-\frac{(\mu-\rho)}{n-1}fg.
		\end{eqnarray*}
	From \eqref{eqtsplit},  we have 
	\begin{equation*}
		fRm(N, X, Y, N)=-fRic(X, Y) +\frac{(\mu-\rho)}{n-1}fg(X,Y).
	\end{equation*} 
	We consider an orthonormal frame $\{ E_i \}$ on $\Sigma_t$, then 
	\begin{eqnarray*}
		Ric(X, Y)& =& Rm(N, X, Y, N) + \sum_i Rm(E_i, X, Y, E_i)\\
		&=& - Ric(X, Y) + \frac{(\mu-\rho)}{n-1}g(X,Y)+  Ric_{\Sigma_t} (X, Y),
	\end{eqnarray*}	
	where we also use the Gauss equation in the second equality of the last equation, and we denote by $Ric_{\Sigma_t}$ as the Ricci tensor of $\Sigma_t$ induced from $g$. It gives that, for all tangential vector fields $X, Y$ to $\Sigma_t$, 
	\begin{equation} \label{equation:Ricci}
		Ric(X, Y) = \frac{1}{2} Ric_{\Sigma_t}(X, Y)+\frac{(\mu-\rho)}{2(n-1)}g(X,Y)
	\end{equation}
	and hence, combining the previous formulas and \eqref{eqstfp}, we {infer that}
	$$\nabla^2 f=\frac{1}{2} fRic_{\Sigma_t}-\frac{(\mu-\rho)}{2(n-1)}fg{.}$$
Since, $\nabla_{\Sigma_t}^2 f = \nabla^2 f$, we obtain
\begin{equation}\label{equation:induced-static-1}
	\nabla_{\Sigma_t}^2 f =  \frac{1}{2} fRic_{\Sigma_t} -\frac{(\mu-\rho)}{2(n-1)}fg_{\Sigma_t}.
\end{equation}
Thus, taking the trace, we observe that
\begin{equation}\label{equation:induced-static-2}
	\triangle_{\Sigma_t}f+\frac{(\mu-\rho)}{2}f= \frac{1}{2}fR_{\Sigma_t}.
\end{equation}

Now, using \eqref{eqstfp} and \eqref{eq2}, we obtain
\begin{eqnarray*}
    \frac{1}{2}fR_{\Sigma_t}+fRic(N,\,N)&=&\bigg(\frac{n-2}{(n-1)}\mu+\frac{n}{n-1}\rho\bigg)f+\frac{(\mu-\rho)}{n-1}f +\frac{(\mu-\rho)}{2}f\nonumber\\
    &=&\frac{3\mu}{2}f + \frac{\rho}{2}f.
\end{eqnarray*}
Using that $\Sigma_t$ is totally geodesic in the Gauss equation, i.e.,
\begin{eqnarray*}
    \mu=\frac{R}{2}=\frac{R^{\Sigma_t}}{2} +2Ric(N,\,N)
\end{eqnarray*}
and the fact that $f\neq0,$ we have
$$f\frac{3\mu}{2} +f\frac{\rho}{2}=f\mu.$$
Finally, since $f\neq0$ at $\Sigma$ we obtain 
\begin{eqnarray}\label{legal}
	\mu+\rho=0\quad\mbox{in}\quad\Sigma_t,\quad\forall\, t\in[0,\,\varepsilon).
\end{eqnarray}
Therefore, from Lemma \ref{modaso_O}, i.e.,
\begin{eqnarray*}
   - fg(\nabla \rho,\,X) = (\mu+\rho)g(\nabla f,\,X) = 0;\quad\forall\, X\in T\Sigma_t,
\end{eqnarray*}
the scalar curvature $R=2\mu$ is constant at $\Sigma_t$ (unless $f=0$ at $\Sigma_t$).
Considering $\mu\geq0$, $\rho\geq0$ and \eqref{legal}, we conclude that $\mu=\rho=0$ in $\Sigma_t$. 

Second, we claim that $f$ is constant on $\Sigma_t$ for each $t\in[0,\,\varepsilon).$ In fact, from \eqref{equation:induced-static-2}, we get
\begin{eqnarray}\label{rsigma1}
	\int_{\Sigma_t} R_{\Sigma_t} d\sigma=2\int_{\Sigma_t}\frac{\triangle_{\Sigma_t} f}{f}d\sigma=2\int_{\Sigma_t}f^{-2} |\nabla_{\Sigma_t} f|^2  d\sigma\geq0.
\end{eqnarray}

Remember the following identity $\mathrm{div}_{\Sigma_t} \left(\nabla_{\Sigma_t}^2 f \right)=\nabla_{\Sigma_t}(\Delta_{\Sigma_t} f) + Ric_{\Sigma_t}(\nabla_{\Sigma_t}f),$ where $\mathrm{div}$ denotes the divergence of a $(0,2)$-tensor. Thus, taking the divergence of \eqref{equation:induced-static-1} and using \eqref{equation:induced-static-2}, we obtain that
\begin{eqnarray*}
	\frac{1}{2}\nabla_{\Sigma_t}fR_{\Sigma_t}+\frac{1}{2}f\nabla_{\Sigma_t}R_{\Sigma_t} + \frac{1}{2}Ric_{\Sigma_t}( \nabla_{\Sigma_t} f)-\frac{1}{2}f\mathrm{div}_{\Sigma_t}Ric_{\Sigma_t} =0.
\end{eqnarray*}
By the twice contracted second Bianchi identity $(\mathrm{div}_{\Sigma_t}Ric_{\Sigma_t}=\frac{1}{2}\nabla_{\Sigma_t}R_{\Sigma_t}),$ we arrive at
\begin{eqnarray*}
	\frac{1}{2}\nabla_{\Sigma_t}fR_{\Sigma_t}+\frac{1}{2}f\nabla_{\Sigma_t}R_{\Sigma_t} + \frac{1}{2}Ric_{\Sigma_t}( \nabla_{\Sigma_t} f)-\frac{1}{4}f\nabla_{\Sigma_t}R_{\Sigma_t} =0.
\end{eqnarray*}
This combined with \eqref{equation:induced-static-1} yields
\begin{eqnarray*}
	\frac{1}{2}\nabla_{\Sigma_t}fR_{\Sigma_t}+\frac{1}{4}f\nabla_{\Sigma_t}R_{\Sigma_t}+\frac{\nabla_{\Sigma_t}^2f(\nabla_{\Sigma_t} f)}{f}=0,
\end{eqnarray*}
which can be rewritten as
$$\frac{1}{4f}\nabla_{\Sigma_t}[(f^2R_{\Sigma_t})+2|\nabla_{\Sigma_t} f|^2]=0.$$

%


This implies that $R_{\Sigma_t} f^2 + 2|\nabla_{\Sigma_t} f|^2=c,$ where $c$ is a constant on each $\Sigma_t$. Now, we will prove that $c=0.$ In fact, from $R_{\Sigma_t} f^2 + 2|\nabla_{\Sigma_t} f|^2 =c$ and \eqref{equation:induced-static-2}, we immediately get
$$c=\frac{2 \Delta_{\Sigma_t} f}{f}f^2+ 2|\nabla_{\Sigma_t} f|^2.$$
Integrating it over $\Sigma_t,$ it is easy to check that $R_{\Sigma_t} f^2 + 2|\nabla_{\Sigma_t} f|^2 =0.$ This combined with
\eqref{rsigma1}, we conclude that $ R_{\Sigma_t} = 0$ and $f$ is constant on $\Sigma_t$ for each $t\in [0,\epsilon)$. 

Therefore, from \eqref{equation:Ricci} and \eqref{equation:induced-static-1} we have $Ric_{\Sigma_t}(X,\,Y)=Ric(X, Y)=0$ for any tangent vectors in $\Sigma_t$. Moreover, since $A(\cdot,\,t)=R_{\Sigma_t}=R=0$ in $\Sigma_t$ for all $t\in[0,\,\varepsilon)$, by Codazzi and Gauss equations we obtain, respectively, $Ric(X,\,N)=0$ and $Ric(N,\,N)=0$. Thus, $Ric=0$ in $U\subseteq M$. Consequently, from \eqref{eqstfp} and \eqref{eq2} we can infer that $\mu=0$ in $U.$

Furthermore, from Lemma \ref{lmasood_O} we have 
\begin{eqnarray}\label{eqfrho}
	f\nabla\mu=\nabla[f(\mu+\rho)].
\end{eqnarray}
Using that $\mu=0$ in $U$ and \eqref{eqfrho}, we obtain that
$f\rho=k$ in $U$, where $k$ is a constant. Thus, $f\rho=k$ in $\Sigma_t$. However, $0=\mu=\rho$ in $\Sigma_t,$ we get $k=0$ in $U$. On the other hand, $f$ is a non-null constant in $\Sigma_t$ implying that $\rho=0$ in $U.$ Therefore, from Definition \ref{def1} we have $\nabla^{2}f=0$, and so $f$ is an affine function. 
\end{proof}

\begin{proof}[\bf{Proof of Corollary} \ref{coro1}] 
Suppose by contradiction that there is a closed, locally area minimizing surface $\Sigma$ in $M$. Using the Theorem \ref{propositionsplitting}, we obtain that $g$ must be Ricci-flat in an open neighborhood of the minimal surface. Since $ f > 0 $ on $M$, and $f$ and $g$ {are} analytic on $M^3$, $(M^3,\,g)$ has vanishing Ricci curvature. But, in three dimensions, this implies that $(M^3,\,g)$ is isometric to the Euclidean space, which does not have closed minimal surfaces.
\end{proof}

\begin{proof}[{\bf Proof of Theorem \ref{topospf}}]
We proved that $\Sigma_t$ is totally geodesic for each $t$ (Lemma \ref{lemma:monotonicity}). Considering an orthonormal frame $\{V,\,U,\,N_t\}$ on $\Sigma_t$ we have
\begin{eqnarray*}
	\frac{1}{f}\nabla^{2}_{\Sigma_t}f(U,\,U)&=&\frac{1}{f}\nabla^{2}f(U,\,U) \nonumber\\
	&=& Ric(U,\,U) - \frac{\mu-\rho}{2} \nonumber\\
	&=& sec(U\wedge V) + sec(U\wedge N_t)- \frac{\mu-\rho}{2}\nonumber\\
	&=& K_t + R(N_t,\,U,\,U,\,N_t) - \mu \nonumber\\
	&=& K_t -\mu - \frac{1}{f}\nabla^{2}_{\Sigma_t}f(U,\,U),
\end{eqnarray*}
where we used that $\mu=-\rho$ at $\Sigma_t$; see \eqref{legal}. From the arbitrariness of $U\in T\Sigma_t$, we conclude that
\begin{eqnarray*}
	\nabla^{2}_{\Sigma_t}f=\frac{1}{2}(K_t - \mu) f g_{\Sigma_t}.
\end{eqnarray*}
Consequently, 
\begin{eqnarray}\label{laplasigmat}
	\Delta_{\Sigma_t}f=(K_t - \mu)f
\end{eqnarray}
and
\begin{eqnarray*}
	\nabla^{2}_{\Sigma_t}f=\frac{1}{2}\Delta_{\Sigma_t}f g_{\Sigma_t}.
\end{eqnarray*}
Taking the divergence of the Hessian and using $\mathrm{div}_{\Sigma_t} \left(\nabla_{\Sigma_t}^2 f \right)=\nabla_{\Sigma_t}(\Delta_{\Sigma_t} f) + K_t\nabla_{\Sigma_t}f,$ we get
\begin{eqnarray*}
	0&=&\nabla_{\Sigma_t}(\Delta_{\Sigma_t} f) + 2K_t\nabla_{\Sigma_t}f\nonumber\\
	&=&\nabla_{\Sigma_t}[(K_t-\mu)f] + 2K_t\nabla_{\Sigma_t}f\\
	&=&f\nabla_{\Sigma_t}K_t +3K_t\nabla_{\Sigma_t}f - \mu\nabla_{\Sigma_t}f,
\end{eqnarray*}
we also used the fact that $\mu$ is constant at $\Sigma_t$. By a straightforward computation, 
\begin{eqnarray*}
\nabla_{\Sigma_t}[f^3(K_t-\frac{\mu}{3})]=0.
\end{eqnarray*}
Hence, 
\begin{eqnarray}\label{curvaturaK}
	K_t=\frac{c}{f^3} + \frac{\mu}{3},
\end{eqnarray}
where $c\in\mathbb{R}.$ Till this point, the argument was the same used by Ambrozio \cite{ambrozio2017}.

On the other hand, the decomposition of the curvature operator \eqref{weyl} leads to
\begin{eqnarray*}\label{wt}
     R(X,\,Y,\,W,\,Z)&=&[Ric(X,\,W)g(Y,\,Z)-Ric(X,\,Z)g(Y,\,W)\nonumber\\
     &&+Ric(Y,\,Z)g(X,\,W)-Ric(Y,\,W)g(X,\,Z)]\\
     &&-\frac{R}{2}[g(X,\,W)g(Y,\,Z)-g(X,\,Z)g(Y,\,W)].\nonumber
\end{eqnarray*}
Hence, for any $X,\,Y\in T_p\Sigma_t$ and $N_t$ normal to $\Sigma_t$ we obtain 
\begin{eqnarray*}
	R(N_t,\,X,\,Y,\,N_t) = [\mu - Ric(N_t,\,N_t)]g(X,\,Y) - Ric(X,\,Y).
\end{eqnarray*}
Since $\Sigma_t$ is totally geodesic, from the Gauss equation, we get
\begin{eqnarray*}
	Ric(X,\,Y)=R(N_t,\,X,\,Y,\,N_t) + Ric_{\Sigma_t}(X,\,Y).
\end{eqnarray*}
So,
\begin{eqnarray*}
	Ric(X,\,Y) = \frac{1}{2}Ric_{\Sigma_t}(X,\,Y) +\frac{1}{2}[\mu-Ric(N_t,\,N_t)]g(X,\,Y).
\end{eqnarray*}
Combine the above identity with $\mu+\rho=0$ and \eqref{equation:Ricci}, i.e.,
	\begin{equation*} 
		Ric(X, Y) = \frac{1}{2} Ric_{\Sigma_t}(X, Y)+\frac{\mu}{2}g(X,Y),
	\end{equation*}
 to see that
\begin{eqnarray*}
	Ric(N_t,\,N_t)=0.
\end{eqnarray*}

Therefore, from \eqref{eq001} we get $\Delta_{\Sigma_t} f=0$. Thus, 
\begin{eqnarray*}
	0=\Delta_{\Sigma_t}f=(K_t - \mu)f.
\end{eqnarray*}
{Finally, by \eqref{curvaturaK} we conclude that}
\begin{eqnarray*}
    K_t=\mu\quad\mbox{and}\quad f^3=\frac{3c}{2\mu}
\end{eqnarray*}
 are constant at $\Sigma_t$, i.e., $\mu(x,\,t)$ and $f(x,\,t)$ do not depends on $x\in M$.

 From the Gauss-Bonnet theorem, we can infer that
\begin{eqnarray*}
\mu|\Sigma_t|+\int_{\partial\Sigma_t}K^{g}da=2\pi\mathfrak{X}(\Sigma_t). 
\end{eqnarray*}
\end{proof}

\begin{proof}[{\bf Proof of Theorem \ref{cruztheorem}}]
   We start the proof by remembering the Gover-Orsted integral formula; see \cite[Theorem 20]{tiarlos}. If $T$ is a symmetric $2$-tensor field such that $\textnormal{div}T=0,$ and $X$ is a vector field on $M,$ then
    \begin{eqnarray*}
        \int_{M}X(\tau)=\frac{n}{2}\int_M\langle \mathring{T},\,\mathfrak{L}_{X}g\rangle - n \int_{\partial M} \mathring{T}(X,\,N),
    \end{eqnarray*}
    where $\tau=\textnormal{trace}(T),$ $\mathring{T}$ and $N$ stand for the trace free part of $T$ and the outer unit normal along $\partial M.$ Consider $T$ as the Einstein tensor, i.e., $T=Ric-\frac{R}{2}g$, and $\partial M=f^{-1}(c).$ Moreover, take $X=\nabla f$ and $N=-\dfrac{\nabla f}{|\nabla f|}.$ A straightforward computation gives us
    \begin{eqnarray*}
    \tau={\textnormal{trace}}(T)=R-\frac{n}{2}R=-\frac{(n-2)}{2}R\quad\mbox{and}\quad \mathring{T}=\mathring{R}ic=Ric-\frac{R}{n}g.
    \end{eqnarray*}

Assuming $X=\nabla f$, for any vector fields $Y,\,Z$ we have
\begin{eqnarray*}
    \left(\mathfrak{L}_{\nabla f}\right)g(Y,\,Z) = 2\nabla^2f(Y,\,Z).
\end{eqnarray*}
    Therefore, 
    \begin{eqnarray}\label{integral}
        -\frac{(n-2)}{2}\int_{M}\langle\nabla R,\,\nabla f\rangle = n\int_M \langle \mathring{R}ic,\,\nabla^2f\rangle - n \int_{\partial M} \mathring{R}ic(\nabla f,\,N).
    \end{eqnarray}
Moreover, by integration we have
    \begin{eqnarray*}
      -\frac{n-2}{2}\int_{M}\langle\nabla R,\,\nabla f\rangle = \frac{n-2}{2}\int_{M}R\Delta f + \frac{n-2}{2}\kappa\int_{\partial M}R  
    \end{eqnarray*}

    Now, we will analyze every term in the above identity separately. To start, considering that $(M^n,\,g)$ is a static perfect fluid we have
    \begin{eqnarray*}
      \langle \mathring{R}ic,\,\nabla^2f\rangle = \langle \mathring{R}ic,\,\mathring{\nabla}^2f\rangle  = f|\mathring{R}ic|^2.
    \end{eqnarray*}

By hypothesis, $\kappa=|\nabla f|$ is constant at $\partial M=f^{-1}(c)$. On the other hand, in case $c=0$  from \eqref{eqstfp} it is easy to see that
\begin{eqnarray*}
    fRic(\nabla f)=\frac{1}{2}\nabla|\nabla f|^2 +\frac{(\mu-\rho)}{(n-1)}f\nabla f.
\end{eqnarray*}
So, $\kappa=|\nabla f|$ is constant at $\partial M=f^{-1}(0).$ We conclude that in case $c=0$ the hypothesis over $\kappa$ is trivial.

{From the Gauss equation, i.e., 
\begin{eqnarray*}
    R=R^{\partial M} +2Ric(N,\,N) + |A|^2-H^2.
\end{eqnarray*}
and since $(n-1)|A|^2\geq H^2$, we obtain
\begin{eqnarray*}
    \frac{(n-2)}{2n}R+\dfrac{(n-2)}{2(n-1)}H^2-\frac{1}{2}R^{\partial M} \geq\mathring{R}ic(N,\,N),
\end{eqnarray*}
Considering $\mu=-\rho$ with $\mu$ constant, we get
  \begin{eqnarray}\label{cacons}
        2\pi\kappa\mathfrak{X}(\partial M) \leq \int_M f|\mathring{R}ic|^2 +\frac{\kappa}{3}\mu|\partial M|+\dfrac{\kappa}{4}\int_{\partial M}H^2.
    \end{eqnarray}

In the general case, considering $\partial M=f^{-1}(c)$ {and from equation \eqref{integral}} we can infer that
  \begin{eqnarray*}
       \frac{n-2}{2}\int_{M}R\Delta f + \frac{n-2}{2}\kappa\int_{\partial M}R  \leq n\int_M f|\mathring{R}ic|^2 + n \kappa\int_{\partial M} \left[\frac{(n-2)}{2n}R+\dfrac{(n-2)}{2(n-1)}H^2-\frac{1}{2}R^{\partial M} \right].
    \end{eqnarray*}}
    
Now, {we conclude that} 
  \begin{eqnarray*}
        \frac{n-2}{2n}\int_{M}R\Delta f+\kappa\int_{\partial M}\dfrac{R^{\partial M}}{2} \leq \int_M f|\mathring{R}ic|^2 +\dfrac{\kappa(n-2)}{2(n-1)}\int_{\partial M}H^2.
    \end{eqnarray*}
Moreover, if $n=3$ by Gauss-Bonnet we get
  \begin{eqnarray*}
       \frac{1}{6}\int_{M}f\mu(\mu+3\rho)+ 2\pi\kappa\mathfrak{X}(\partial M) \leq \int_M f|\mathring{R}ic|^2 +\dfrac{\kappa}{4}\int_{\partial M}H^2,
    \end{eqnarray*}
where $\mu=R/2$. Moreover, equality holds if $\partial M$ is totally umbilical. Let us combine the above inequality with the Hawking mass, i.e., 

  \begin{eqnarray*}
       \frac{1}{6}\int_{M}f\mu(\mu+3\rho)+ 2\pi\kappa\mathfrak{X}(\partial M) \leq \int_M f|\mathring{R}ic|^2 +4\pi\kappa\left(1 -\sqrt{\dfrac{16\pi}{|\partial M|}}\mathfrak{M}_{Haw}(\partial M)\right).
    \end{eqnarray*}
Hence,

  \begin{eqnarray*}
   \sqrt{\dfrac{16\pi}{|\partial M|}}\mathfrak{M}_{Haw}(\partial M)      \leq \int_M f\left[|\mathring{R}ic|^2 - \frac{1}{6}\mu(\mu+3\rho)\right] +4\pi\kappa - 2\pi\kappa\mathfrak{X}(\partial M).
    \end{eqnarray*}
equality holds if and only if $\partial M$ is totally umbilical.

At the same time, by using once more \eqref{eq2} and that $f$ vanishes on $\partial M,$ we get $\Delta f=0$ over $\partial M.$ Therefore, from the formula for the second fundamental form, we deduce
\begin{equation}
A_{ab}=-\langle\nabla_{e_a}\nu,e_b\rangle=\frac{1}{|\nabla f|}\nabla_a\nabla_bf=\frac{\Delta f}{n|\nabla f|}g_{ab}=0,\nonumber
\end{equation} and hence, $\partial M$ is also a minimal hypersurface. Then, from \eqref{cacons} we have
  \begin{eqnarray*}
        2\pi\kappa\mathfrak{X}(\partial M) = \int_M f|\mathring{R}ic|^2 +\frac{\kappa}{3}\mu|\partial M|.
    \end{eqnarray*}

\end{proof}

Nonetheless, the technique employed in Theorem \ref{cruztheorem} is not directly applicable to studying perfect fluid in a broad and general form. The reason is that \eqref{eq2} is not as efficient as the Laplacian present in the vacuum structure. Inspired by Theorem 22 in \cite{tiarlos}, we present a topological classification for a surface ${\Sigma}=f^{-1}(c)$  in a static perfect fluid space without restricting $\mu$ and $\rho$. The following result also holds for the compact and non-compact cases of $(M^3,\,g)$.

	\begin{theorem}\label{cruztheorem007}
		Let $(M^3,\,g,\,f,\,\rho)$ be a three-dimensional (compact or non-compact)  static perfect fluid space with $\Sigma=f^{-1}(c)$. Then,
		\begin{eqnarray*}
			2\pi\mathfrak{X}(\Sigma)+\rho_{0}|\Sigma|\leq(4c)^{-1}\int_{\Sigma}fH\left(H-\dfrac{4|\nabla f|}{f}\right),
		\end{eqnarray*}
	where $\rho_0=\rho\Big|_{\Sigma}$ is a constant and $|\Sigma|$ is the area of $\Sigma$. The equality holds if $\Sigma$ is totally umbilical. In that case, if either
		\begin{eqnarray*}
			H < I^-
			\quad\mbox{or}\quad
			I^+ < H,
		\end{eqnarray*}
		where $H$ is the mean curvature of $\Sigma$, $\kappa=|\nabla f|$ at $\Sigma$ and {$I^{\pm}=\dfrac{2}{c}\left(\kappa \pm \sqrt{\kappa^2+c^2\rho_0}\right)$} 
	 the surface must be diffeomorphic to a sphere. Moreover, if $H\in[I^-,\,I^+]$, we can conclude that a photon surface must be diffeomorphic to a torus. 
	\end{theorem}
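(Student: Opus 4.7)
The plan is to combine the two static perfect fluid equations with the Gauss equation on $\Sigma=f^{-1}(c)$, integrate via Gauss--Bonnet, and then read off the topology from a resulting quadratic in $H$. A distinctive feature of this result (compared with Theorem \ref{cruztheorem} and Theorem \ref{cruztheorem007-c1}) is that no constancy hypothesis on $|\nabla f|$ along $\Sigma$ is needed for the main inequality. As a preliminary, Lemma \ref{lmasood_O} gives $-f\nabla\rho=(\mu+\rho)\nabla f$, and since $\nabla f$ is normal to the level set $\Sigma$ and $f=c\neq 0$ on $\Sigma$, one has $\rho\equiv\rho_{0}$ on $\Sigma$. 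Set $N=-\nabla f/|\nabla f|$ and $\kappa=|\nabla f|$, viewed as functions on $\Sigma$. With the paper's convention $A_{ab}=-\nabla_{a}\nabla_{b}f/|\nabla f|$, the identity $\Delta f=\Delta_{\Sigma}f+\nabla^{2}f(N,N)+H\langle\nabla f,N\rangle$ combined with $\Delta_{\Sigma}f=0$ on $\Sigma$ reduces to $\nabla^{2}f(N,N)=\Delta f+H\kappa$. Inserting \eqref{eq2} and \eqref{eqstfp} with $n=3$ then produces
\begin{equation*}
Ric(N,N)=\frac{\nabla^{2}f(N,N)}{f}+\frac{\mu-\rho}{2}=\mu+\rho+\frac{H\kappa}{c}.
\end{equation*}

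Next, the Gauss equation in dimension three together with the identity $R=2\mu$ gives
\begin{equation*}
K^{\Sigma}=\mu-Ric(N,N)+\frac{1}{2}(H^{2}-|A|^{2})=-\rho_{0}-\frac{H\kappa}{c}+\frac{1}{2}(H^{2}-|A|^{2}).
\end{equation*}
Integrating over the compact surface $\Sigma$, invoking Gauss--Bonnet, and bounding the last term by the pointwise inequality $|A|^{2}\geq H^{2}/2$ available on a two-dimensional surface (with equality exactly at umbilical points), I obtain
\begin{equation*}
2\pi\mathfrak{X}(\Sigma)+\rho_{0}|\Sigma|\leq\int_{\Sigma}\!\left(\frac{H^{2}}{4}-\frac{H\kappa}{c}\right)=\frac{1}{4c}\int_{\Sigma}fH\!\left(H-\frac{4|\nabla f|}{f}\right),
\end{equation*}
which is the stated inequality, with equality when $\Sigma$ is totally umbilical.

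In the equality case, assuming further that $\kappa$ and $H$ are constant on $\Sigma$ (automatic for a photon surface via Proposition \ref{propbene}), the above sharpens to $2\pi\mathfrak{X}(\Sigma)=|\Sigma|\,Q(H)$ with $Q(H)=H^{2}/4-H\kappa/c-\rho_{0}$, whose roots are precisely $I^{\pm}=\tfrac{2}{c}(\kappa\pm\sqrt{\kappa^{2}+c^{2}\rho_{0}})$. For $H<I^{-}$ or $H>I^{+}$ one has $Q(H)>0$, whence $\mathfrak{X}(\Sigma)>0$ and the connected surface $\Sigma$ must be a sphere; for $H\in[I^{-},I^{+}]$ one has $Q(H)\leq 0$, and the photon-surface rigidity pinches $H$ to a critical value, forcing $\mathfrak{X}(\Sigma)=0$, so $\Sigma$ is a torus. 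The principal obstacle is purely bookkeeping: tracking the sign conventions $N=-\nabla f/|\nabla f|$ and $A_{ab}=-\nabla_{a}\nabla_{b}f/|\nabla f|$ so that $H$ enters the Laplacian decomposition and the Gauss equation with the right sign, and correctly identifying the roots $I^{\pm}$; once that is done, the derivation reduces to a brief algebraic manipulation followed by one application of Gauss--Bonnet and the dimension-two Cauchy--Schwarz bound.
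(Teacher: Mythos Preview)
Your proof is correct and follows essentially the same approach as the paper: both combine the Laplacian decomposition $\Delta f=\Delta_{\Sigma}f+\nabla^{2}f(N,N)+H\langle\nabla f,N\rangle$ with the static equations \eqref{eqstfp}--\eqref{eq2}, the Gauss equation, and the inequality $(n-1)|A|^{2}\geq H^{2}$, then apply Gauss--Bonnet; your version is slightly more streamlined by immediately using $\Delta_{\Sigma}f=0$ pointwise (since $f\equiv c$ on $\Sigma$) and working directly in dimension three, whereas the paper carries $\Delta^{\Sigma}f$ and a general $n$ for a few lines before specializing. Regarding the torus conclusion, note that the paper's own proof does not justify this either: from $2\pi\mathfrak{X}(\Sigma)=|\Sigma|\,Q(H)$ with $H\in(I^{-},I^{+})$ one only gets $\mathfrak{X}(\Sigma)<0$, so the torus claim as stated requires $H$ to be exactly at a root of $Q$.
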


\begin{proof}[{\bf Proof of Theorem \ref{cruztheorem007}}]

   For a level set $\Sigma=f^{-1}(c)$ of $M$ we have
\begin{eqnarray*}
\Delta^{\Sigma}f+\nabla^{2}f(N,\,N)=\Delta f-H\langle\nabla f,\,N\rangle,
\end{eqnarray*}
we can infer that
\begin{eqnarray*}
\Delta^{\Sigma}f+f\mathring{R}ic(N,\,N)+H\langle\nabla f,\,N\rangle=\dfrac{(n-1)}{n}\Delta f.
\end{eqnarray*}
Here, $N=-\dfrac{\nabla f}{|\nabla f|}.$

{Now, using the Gauss equation $  R=R^{\Sigma} +2Ric(N,\,N) + |A|^2-H^2$ and the inequality $(n - 1)|A|^2 \geq H^2\nonumber$, we obtain }
\begin{eqnarray}\label{GCeq}
    \frac{(n-2)}{2n}R+\dfrac{(n-2)}{2(n-1)}H^2-\frac{1}{2}R^{\Sigma} \geq \mathring{R}ic(N,\,N),\nonumber
\end{eqnarray}
where the equality holds if $\Sigma$ is totally umbilical. Hence, 
\begin{eqnarray*}
\Delta f\leq\dfrac{n}{(n-1)}\Delta^{\Sigma}f+f\left[\frac{(n-2)}{2(n-1)}R+\dfrac{n(n-2)}{2(n-1)^2}H^2-\frac{n}{2(n-1)}R^{\Sigma}\right]+\dfrac{n}{(n-1)}H\langle\nabla f,\,N\rangle.
\end{eqnarray*}
Combine the above inequality with \eqref{eq2} to obtain
\begin{eqnarray*}
f\rho\leq\Delta^{\Sigma}f+f\left[\dfrac{(n-2)}{2(n-1)}H^2-\frac{1}{2}R^{\Sigma}\right]+H\langle\nabla f,\,N\rangle.
\end{eqnarray*}
Thus,
\begin{eqnarray*}
\int_{\Sigma}\frac{1}{2}fR^{\Sigma}\leq\int_{\Sigma}\left[H\left(\dfrac{(n-2)}{2(n-1)}fH-|\nabla f|\right)-f\rho\right].
\end{eqnarray*}
Assuming $n=3$ and $\Sigma=f^{-1}(c)$, the Gauss-Bonnet theorem gives us
\begin{eqnarray*}
2c\pi\mathfrak{X}(\Sigma)\leq\int_{\Sigma}\left[H\left(\dfrac{1}{4}fH-|\nabla f|\right)-f\rho\right].
\end{eqnarray*}
From Lemma \ref{lmasood_O}, we can infer that $\rho=\rho_0$ is constant at $\Sigma$. Therefore, 
\begin{eqnarray}\label{interest007}
	c(2\pi\mathfrak{X}(\Sigma)+\rho_{0}|\Sigma|)\leq\int_{\Sigma}H\left(\dfrac{1}{4}fH-|\nabla f|\right),
\end{eqnarray}
where $|\Sigma|$ is the area of $\Sigma$.
The equality holds if $\Sigma$ is totally umbilical. In that case, we can define following polynomial function $F(H)=\frac{f}{4}H^2-|\nabla f|H -f\rho.$ Therefore, if either $H>I^+$ or $H<I^-$ we have $\Sigma$ diffeomorphic to a sphere. Here, $I^{\pm}=\dfrac{2}{c}\left(\kappa \pm\sqrt{\kappa^2+c^2\rho_0}\right)$ and $\kappa=|\nabla f|$ is constant at $\Sigma$.
\end{proof}

\begin{remark}
	It is straightforward to obtain a Minskowski-type inequality from the above theorem, i.e., an inequality involving the total mean curvature of a surface. Let $(M^3,\,g,\,f,\,\rho)$ be a three-dimensional static perfect fluid space with $\Sigma=f^{-1}(c)$ with non-negative mean curvature $H$. Then,
	\begin{eqnarray*}
		2\pi\mathfrak{X}(\partial M)+\rho_{0}|\partial M|\leq\dfrac{1}{4}\int_{\partial M}H^2.
	\end{eqnarray*}
\end{remark}

\begin{proof}[{\bf Proof of Theorem \ref{cruztheorem007-c1}.}]
 From Theorem \ref{cruztheorem007} we can infer that
    \begin{eqnarray*}
			2\pi\mathfrak{X}(\Sigma)+\rho_{0}|\Sigma|\leq\dfrac{1}{4c}\int_{\Sigma}fH\left(H-\dfrac{4|\nabla f|}{f}\right),
		\end{eqnarray*}
  Considering our hypothesis, we get
    \begin{eqnarray*}
			2\pi\mathfrak{X}(\Sigma)+\rho_{0}|\Sigma|\leq\dfrac{1}{4}\int_{\Sigma}H^2 - \frac{\kappa}{c}\int_{\Sigma}H.
		\end{eqnarray*}
  Then, by definition of the Hawking mass and the Brown-York, we have
  \begin{eqnarray*}
      \dfrac{1}{4}\int_{\Sigma}H^2 = 4\pi-4\pi\sqrt{\dfrac{16\pi}{|\Sigma|}}\mathfrak{M}_{Haw}(\Sigma)
  \end{eqnarray*}
  and
  \begin{eqnarray*}
      \int_{\Sigma}H = 8\pi \mathfrak{M}_{BY}(\Sigma) - \int_{\Sigma}H_{0},
  \end{eqnarray*}
  respectively. Here, $H_0$ is the mean curvature of the isometric embedding of $\Sigma$ in $\mathbb{R}^3.$ Combine the last three equations to get
   \begin{eqnarray*}
			2\pi\mathfrak{X}(\Sigma)+\rho_{0}|\Sigma|\leq4\pi\left(1-\sqrt{\dfrac{16\pi}{|\Sigma|}}\mathfrak{M}_{Haw}(\Sigma)\right) -  \frac{\kappa}{c}\left(8\pi \mathfrak{M}_{BY}(\Sigma) - \int_{\Sigma}H_{0}\right),
		\end{eqnarray*}
  where $\mathfrak{X}(\Sigma)$ stands for the Euler number of $\Sigma.$ Thus,
  \begin{eqnarray*}
		4\pi\sqrt{\dfrac{16\pi}{|\Sigma|}}\mathfrak{M}_{Haw}(\Sigma) + 8\pi\frac{\kappa}{c} \mathfrak{M}_{BY}(\Sigma)\leq 4\pi - 	2\pi\mathfrak{X}(\Sigma) - \rho_{0}|\Sigma|  + \frac{\kappa}{c} \int_{\Sigma}H_{0},
		\end{eqnarray*}
i.e.,
      \begin{eqnarray*}
			2\sqrt{\dfrac{16\pi}{|\Sigma|}}\mathfrak{M}_{Haw}(\Sigma) +   \frac{4\kappa}{c}\mathfrak{M}_{BY}(\Sigma)\leq 2-\mathfrak{X}(\Sigma) + \frac{\kappa}{2\pi c}\int_{\Sigma}H_{0} - \dfrac{\rho_{0}}{2\pi}|\Sigma|.
		\end{eqnarray*}

Any constant mean curvature surface in the Schwarzschild space must be a sphere (see \cite{Brendle2013}). Let us consider the Schwarzschild space $(\mathbb{R}\times\mathbb{S}^2,\,g),$ where $$g=f^{-2}dr^2+r^2g_{\mathbb{S}^2}$$ and $f(x) = \sqrt{1-2\frac{\mathfrak{M}}{|x|}}$ with $g_{\mathbb{S}^2}$ denoting the canonical metric on ${\mathbb{S}^2}$. Here, $|x|=\sqrt{x_1^2+x_2^2+x_3^2}$ and $\kappa=|\nabla f|$. The metric $g$ is defined $\{0<|x|<2\,\mathfrak{M}\}$ {and} $\mathfrak{M}$ stands for the ADM mass. The mean curvature of the sphere $\partial B_r=\{|x|=r\}$, where $r=\dfrac{2\mathfrak{M}}{(1-c^2)}$, when it is embedded in $\mathbb{R}^3$ is 
    \begin{eqnarray*}
        H_0 = \dfrac{2}{r}.
    \end{eqnarray*}
Note that,
\begin{eqnarray*}
    \mathfrak{M}_{Haw}&=&\sqrt{\dfrac{|\partial B_r|}{16\pi}}\left(1 - \dfrac{1}{16\pi}\int_{\partial B_r}H_{0}^2\right)=\sqrt{\dfrac{4\pi r^2}{16\pi}}\left(1 - \dfrac{1}{16\pi}\dfrac{4}{r^2}4\pi r^2\right)=0.
\end{eqnarray*}

Moreover, 
\begin{eqnarray*}
    H=\dfrac{2}{r}f(r)=\dfrac{2}{r}\left(1-\dfrac{2\mathfrak{M}}{r}\right)^{1/2},
\end{eqnarray*}
and
\begin{eqnarray*}
    \mathfrak{M}_{Haw}(\partial B_r)&=&\sqrt{\dfrac{|\partial B_r|}{16\pi}}\left(1 - \dfrac{1}{16\pi}\int_{\partial B_r}H^2\right)=\sqrt{\dfrac{4\pi r^2}{16\pi}}\left(1 - \dfrac{1}{16\pi} 4\left(\dfrac{r-2\mathfrak{M}}{r^3}\right) 4\pi r^2\right)=\mathfrak{M}.
\end{eqnarray*}
We also have the Brown-York mass given by 
\begin{eqnarray*}
    \mathfrak{M}_{BY}(\partial B_r)=\dfrac{1}{8\pi}\int_{\partial B_r}(H_0 - H) = \dfrac{1}{8\pi}\left(\dfrac{2}{r}-2\left(\dfrac{r-2\mathfrak{M}}{r^3}\right)^{1/2}\right)4\pi r^2 = r - \sqrt{r(r-2\mathfrak{M})}=\dfrac{2\mathfrak{M}}{1+c}.
\end{eqnarray*}
Note that if $c\to 1$, then $\mathfrak{M}_{BY}\to\mathfrak{M}.$

On the other hand, from Theorem \ref{cruztheorem007-c1} we have
	\begin{eqnarray*}
			\mathfrak{M}_{BY}(\partial B_r)&=& \dfrac{1}{8\pi}\int_{\partial B_r}H_0 - \dfrac{c}{2\kappa}\sqrt{\dfrac{16\pi}{|\partial B_r|}}\mathfrak{M}_{Haw}(\partial B_r)\\
   &=&\dfrac{1}{4\pi r}|\partial B_r| - \dfrac{c}{2\kappa}\sqrt{\dfrac{16\pi}{|\partial B_r|}}\mathfrak{M}_{Haw}(\partial B_r)\\
   &=& r - \dfrac{c}{r\kappa}\mathfrak{M} = \dfrac{2\mathfrak{M}}{(1-c^2)} - \dfrac{c(1-c^2)}{2\kappa}.
		\end{eqnarray*}
In the Schwarzschild space holds the identity
    \begin{eqnarray*}
          \kappa= \frac{(1-c^{2})^{2}}{4\mathfrak{M}}.
    \end{eqnarray*}
Hence, 
	\begin{eqnarray*}
			\mathfrak{M}_{BY}(\partial B_r)= 2\mathfrak{M}\left[\dfrac{1}{(1-c^2)} - \dfrac{c}{(1-c^2)}\right] = \dfrac{2\mathfrak{M}}{1+c}.
		\end{eqnarray*}
\end{proof}

\begin{theorem}
   Let $(M^3,\,g,\,f,\,\rho)$ be a conformally flat three-dimensional static perfect fluid space with $\Sigma=f^{-1}(c)$, where $c$ is a regular value of $f$. Then,
 \begin{eqnarray*}
2\pi\mathfrak{X}(\Sigma) =\left[H\left(\frac{1}{4}H-\frac{\kappa}{c}\right)-\rho_0\right]|\Sigma|.
\end{eqnarray*}
In particular, if $(M^3,\,g,\,f,\,\rho)$ is Einstein, we have
    \begin{eqnarray*}
2\pi\mathfrak{X}(\Sigma) = \left(\dfrac{1}{4}H^2+\dfrac{\mu}{3}\right)|\Sigma|. 
\end{eqnarray*}
\end{theorem}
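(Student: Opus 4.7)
The plan is to feed the conformally flat hypothesis into Proposition \ref{propbene} and then combine the equality case of Theorem \ref{cruztheorem007} with the resulting totally umbilical structure on $\Sigma$.

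First, by Proposition \ref{propbene}, because $(M^3,\,g,\,f,\,\rho)$ is locally conformally flat and $c$ is a regular value of $f$, the level set $\Sigma=f^{-1}(c)$ is totally umbilical with constant mean curvature $H$, and $\kappa=|\nabla f|$ is constant along $\Sigma$. Moreover, since $f\equiv c$ on $\Sigma$, Lemma \ref{lmasood_O} forces $\rho_0=\rho|_{\Sigma}$ to be a constant.

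Second, I invoke Theorem \ref{cruztheorem007}. Its integral inequality
\begin{eqnarray*}
c\bigl(2\pi\mathfrak{X}(\Sigma)+\rho_{0}|\Sigma|\bigr)\leq\int_{\Sigma}H\!\left(\tfrac{1}{4}fH-|\nabla f|\right)
\end{eqnarray*}
holds with equality precisely when $\Sigma$ is totally umbilical. Since Proposition \ref{propbene} provides exactly this, and since $f=c$, $H$, $\kappa$ are all constant on $\Sigma$, the right-hand side evaluates to $|\Sigma|\,H\!\left(\tfrac{c}{4}H-\kappa\right)$. Dividing by $c$ immediately yields
\begin{eqnarray*}
2\pi\mathfrak{X}(\Sigma) =\left[H\!\left(\tfrac{1}{4}H-\tfrac{\kappa}{c}\right)-\rho_0\right]|\Sigma|,
\end{eqnarray*}
which is the first claim.

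For the Einstein specialization, the task reduces to showing $-\tfrac{H\kappa}{c}-\rho_0=\tfrac{\mu}{3}$. Here I would use formula \eqref{meanlcf} from Proposition \ref{propbene}, namely $H=-\tfrac{2}{\kappa}\bigl(f\lambda-\tfrac{f R}{3}+\tfrac{\Delta f}{3}\bigr)$. In the Einstein case $\lambda=R/3$, so the first two terms cancel and $H=-\tfrac{2}{3\kappa}\Delta f$. Substituting \eqref{eq2} for $n=3$, i.e.\ $\Delta f=\tfrac{(\mu+3\rho)}{2}f$, and using $f=c$ on $\Sigma$, gives $H=-\tfrac{(\mu+3\rho_0)c}{3\kappa}$, hence $-\tfrac{H\kappa}{c}=\tfrac{\mu+3\rho_0}{3}$. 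Subtracting $\rho_0$ produces $\tfrac{\mu}{3}$, and plugging this back into the first identity recovers
\begin{eqnarray*}
2\pi\mathfrak{X}(\Sigma) = \left(\tfrac{1}{4}H^{2}+\tfrac{\mu}{3}\right)|\Sigma|.
\end{eqnarray*}

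I expect no real obstacle here, as the theorem is essentially a corollary of Proposition \ref{propbene} and Theorem \ref{cruztheorem007}; the only mildly technical point is verifying that the conformally flat hypothesis triggers the equality case of Theorem \ref{cruztheorem007}, which follows directly from the total umbilicity provided by Proposition \ref{propbene}.
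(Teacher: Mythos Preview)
Your proposal is correct and rests on the same two pillars as the paper's proof: Proposition \ref{propbene} (giving $\Sigma$ totally umbilical with $H$, $\kappa$ constant) and the equality case of Theorem \ref{cruztheorem007} (equation \eqref{interest007}). The difference lies in how the equality is processed. You read the first identity off directly from \eqref{interest007} with $f=c$, $H$, $\kappa$, $\rho_0$ constant, and then handle the Einstein case separately via \eqref{meanlcf}. The paper instead rewrites the equality using the tangential Ricci eigenvalue $\lambda$, obtaining $2\pi\mathfrak{X}(\Sigma)=\int_\Sigma\bigl[\tfrac14 H^2+2\lambda-\mu\bigr]$, deduces the Einstein case from $3\lambda=R$, and then for the general conformally flat statement passes through the warped product splitting $(M^3,g)=(I\times N^2,\,dr^2+\varphi^2\bar g)$ to compute $H$ in terms of $R_{11}=Ric(N,N)$ and thereby recover the original form $H(\tfrac14 H-\tfrac{\kappa}{c})-\rho_0$. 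Your route avoids this detour entirely; the warped product computation in the paper is effectively undoing the substitution made a few lines earlier, so your argument is the more economical one while using exactly the same ingredients.
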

\begin{proof}

From Proposition \ref{propbene}, for any locally conformally flat (or Einstein), static perfect fluid space, the set level set $f^{-1}(c)$ is totally umbilical, and the mean curvature of $\Sigma=f^{-1}(c)$ is constant. Moreover, from \eqref{meanlcf} we have
\begin{eqnarray*}
    H=-\dfrac{2}{\kappa}\left[\left(\lambda-\dfrac{R}{3}\right)f+\dfrac{\Delta f}{3}\right],
\end{eqnarray*}
where $\lambda$ is the eigenvalue of $Ric$ at $\Sigma$.

Thus, from \eqref{eq2} we {obtain}
 \begin{eqnarray*}
    H=\dfrac{2c}{\kappa}\left[\frac{1}{2}(\mu-\rho)-\lambda\right],
\end{eqnarray*}
and {using} \eqref{interest007}, we can infer that
\begin{eqnarray*}
2\pi\mathfrak{X}(\Sigma) = \int_{\Sigma}\left[\dfrac{1}{4}H^2+2\lambda-\mu\right].
\end{eqnarray*}
Considering $3\lambda=R$, i.e., $(M,\,g)$ an Einstein manifold, we have
\begin{eqnarray*}
2\pi\mathfrak{X}(\Sigma) = \int_{\Sigma}\left[\dfrac{1}{4}H^2+\dfrac{1}{3}\mu\right], 
\end{eqnarray*}
i.e., 
\begin{eqnarray*}
2\pi\mathfrak{X}(\Sigma) = \dfrac{1}{4}H^2|\Sigma|+\dfrac{1}{3}\int_{\Sigma}\mu. 
\end{eqnarray*}
On the other hand, if $2\lambda+Ric(N,\,N)=R${, } we get
\begin{eqnarray*}
2\pi\mathfrak{X}(\Sigma) = \int_{\Sigma}\left[\dfrac{1}{4}H^2-Ric(N,\,N)+\mu\right].
\end{eqnarray*}

	Furthermore, it is well-known that Proposition \ref{propbene} implies that $$(M^{3},\,g)=(I\times N^2,\, dr^2+\varphi^2\overline{g}),$$
 see the details in \cite{leandro2019}. The Ricci tensor of $(M^3,\,g)$ is
	\begin{eqnarray}\label{wps}
		R_{11}=-2\frac{\ddot{\varphi}}{\varphi},\qquad R_{1a}=0
	\end{eqnarray}
	and
	\begin{eqnarray*}
		R_{ab}=\overline{R}_{ab}-\left[(\dot{\varphi})^2+\varphi\ddot{\varphi}\right]\bar{g}_{ab}\qquad  (a,\,b\in\{2,\,3\}).
	\end{eqnarray*}
	Since $\overline{R}_{ab}=\frac{\overline{R}}{2}\overline{g}_{ab}$, {we obtain}
	\begin{eqnarray*}
		R_{ab}=\left[\frac{\overline{R}}{2}-(\dot{\varphi})^2-\varphi\ddot{\varphi}\right]\overline{g}_{ab}.
	\end{eqnarray*}
	On the other hand, {since 
		\begin{eqnarray*}
			R=\varphi^{-2}\overline{R}-2\left(\frac{\dot{\varphi}}{\varphi}\right)^2-4\frac{\ddot{\varphi}}{\varphi}, 
		\end{eqnarray*}
		we {infer}}
	\begin{eqnarray*}\label{Rbar=R}
		\overline{R} = \varphi^2R+2(\dot{\varphi})^2+4\varphi\ddot{\varphi}.
	\end{eqnarray*}

	Using that $R=2\mu$, we get
	\begin{eqnarray}\label{barR}
		\overline{R} = 2\varphi^2\mu+ 2(\dot{\varphi})^2 + 4\varphi\ddot{\varphi}.
	\end{eqnarray}
	Moreover, from \eqref{eqstfp} we know that
	$$\frac{1}{|\nabla f|^2}\langle\nabla|\nabla f|^2,\,\nabla f\rangle=f(2R_{11} + \rho - \dfrac{1}{2}R),$$ 
	and {by} \eqref{wps} we get
	\begin{eqnarray*}
		\langle\nabla|\nabla f|^2,\,\nabla f\rangle=f(\dot{f})^2\left[-4\frac{\ddot{\varphi}}{\varphi}+ \rho - \dfrac{1}{2}R\right].
	\end{eqnarray*}
	Hence, using that $\nabla f = \dot{f}\partial_r$ we obtain
	\begin{eqnarray*}
2\ddot{f}=f\left[-4\frac{\ddot{\varphi}}{\varphi}+ \rho - \dfrac{1}{2}R\right].
	\end{eqnarray*}
We can see that $R$ depends only on $r$, and for that reason, $R=2\mu$ must be constant at $\Sigma$. We can conclude that $\overline{R}$ does not depend on $\theta$. Therefore, $\overline{R}$ is a constant at $\Sigma$.

	Furthermore, from the static perfect fluid equation, $A_{ab}=-\dfrac{\nabla_a\nabla_bf}{|\nabla f|}$, and since $\Sigma$ is totally umbilical (see Proposition \ref{propbene}) we have
	\begin{eqnarray*}
		-\frac{1}{2}|\nabla f|Hg_{ab}=\nabla_a\nabla_bf=f\left[R_{ab}-\dfrac{1}{2}(\mu-\rho)g_{ab}\right].
	\end{eqnarray*}
	Thus,
	\begin{eqnarray*}
		\left(-\frac{1}{2}|\nabla f|H+\frac{(\mu-\rho)f}{2}\right)\varphi^2\overline{g}_{ab}=fR_{ab}.
	\end{eqnarray*}
	On the other hand, \begin{eqnarray*}
		fR_{ab}=f\left[\frac{\overline{R}}{2}-(\dot{\varphi})^2-\varphi\ddot{\varphi}\right]\overline{g}_{ab}.
	\end{eqnarray*}
	Then,
	\begin{eqnarray*}
		f\left[\mu + \dfrac{\ddot{\varphi}}{\varphi}\right]=\left(-\frac{1}{2}\dot{f}H+\frac{(\mu-\rho)f}{2}\right),
	\end{eqnarray*}
	i.e.,
	\begin{eqnarray*}
		H=-\dfrac{c}{\kappa}\left[(\mu+\rho) + 2\dfrac{\ddot{\varphi}}{\varphi}\right]=\dfrac{c}{\kappa}\left[R_{11}-(\mu+\rho)\right].
	\end{eqnarray*}
  Hence,
  	\begin{eqnarray*}
		-\rho-\dfrac{\kappa}{c}H = \mu-R_{11},
	\end{eqnarray*}
 and so

 \begin{eqnarray*}
2\pi\mathfrak{X}(\Sigma) = \int_{\Sigma}\left[\dfrac{1}{4}H^2-\rho-\dfrac{\kappa}{c}H\right]=\left[H\left(\frac{1}{4}H-\frac{\kappa}{c}\right)-\rho_0\right]|\Sigma|.
\end{eqnarray*}
\end{proof}

\section{Static Perfect Fluid and Free Boundary minimal Hypersurface }

Let $(M^n,g)$ be Riemannian manifold with smooth boundary $\partial M.$ We denote by $X$ the unit normal vector field along $\partial M$ that points outside $\partial M.$ Let $\Sigma^{n-1}$ be a submanifold of $M^n$ with boundary $\partial \Sigma.$ Suppose that $\Sigma$ is immersed in $M$ and that $\partial \Sigma$ is contained in $\partial M,$ i.e., $\Sigma \cap \partial M=\partial \Sigma.$ We consider $N$ a local unit vector field to $\Sigma$ and denote by $A_{\Sigma}$ the second fundamental form of $\Sigma,$ i.e., $A_{\Sigma}(Y,Z)=\langle \nabla_YN,Z\rangle,$ $Y,\ Z\in T_{p}\Sigma,\ p\in \Sigma,$ where $\nabla$ is the Levi-Civita connection of $M.$ Also, we denote by $\nu$ the outward pointing conormal along $\partial\Sigma$ in $\Sigma.$ We say that $\Sigma$ is free boundary if $\Sigma$ meets $\partial M$ ortogonally, this means that $\nu=X$ along $\partial \Sigma$.

Let $F: \Sigma\times (-\varepsilon,\varepsilon)\to\Sigma_t\subset M$ be a variation of $\Sigma=\Sigma_0$ such that for every $t\in(-\varepsilon,\varepsilon)$ the map $F_t:x\in\Sigma\longmapsto F(x,t)\in M$ is an immersion of $\Sigma$ in $M$ and $F_t(\partial \Sigma)$ is contained in $\partial M.$ It is well known that $\Sigma$ is a critical point for the first variation of the area that preserves the property $\Sigma\cap\partial M= \partial \Sigma$ if and only if $\Sigma$ is minimal, i.e., the mean curvature of $\Sigma$ denoted by $H_{\Sigma}=\text{trace}(A_{\Sigma})$ is zero with free boundary. Furthermore, $\Sigma$ is locally area-minimizing in $M$ if every nearby properly immersed hypersurface has an area greater than or equal to the area of $\Sigma.$ In particular, from the first variation of the area, we have that an area-minimizing properly immersed hypersurface $\Sigma$ is minimal and free boundary.

\begin{figure}[!ht]
\centering
\includegraphics[scale = 0.34]{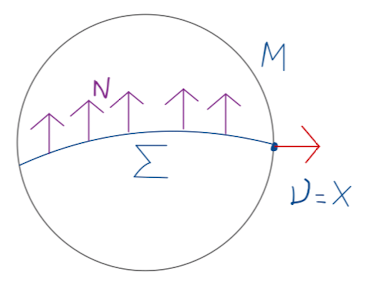}
\caption{The hypersurface $\Sigma$ is free boundary in $M$.}
\label{figfb}
\end{figure}

Moreover, if $\Sigma$ is minimal with a free boundary, then the second variation of the area of $\Sigma$ is given by
\begin{eqnarray}\label{eq000}
\frac{d^2}{dt^2}\Bigg|_{t=0}A(\Sigma_t)=-\int_{\Sigma}\phi\{\Delta_{\Sigma} \phi+(Ric(N,N)+|A_{\Sigma}|^2)\phi\}d\sigma+ \int_{\partial \Sigma}\phi\left\{\frac{\partial \phi}{\partial \nu}-II(N,N)\phi\right\}da,
\end{eqnarray}
where $\phi \in C^{\infty}(\Sigma),$ $ \Delta_{\Sigma}$ is the Laplace operator of $\Sigma$ with respect to the induced metric, $Ric$ is the Ricci tensor of $M,$ $\frac{\partial f}{\partial \nu}$ is the derivative of $f$ in the direction of the exterior normal $\nu$ and $II$ is the second fundamental form of $\partial M$ in $M$, i.e., $II(W,\,W)=\langle \nabla_{W}X,\,W\rangle,$ for all $W\in T_{p}\partial M,\ p\in\partial M.$ We say that $\Sigma$ is free boundary stable, if the second variation of area is nonnegative for every variation that preserves the boundary $\partial M.$ Finally, we say that $\Sigma$ is two-sided if there exists a unit vector field $N$ along $\Sigma$ that is normal to $\Sigma.$ For more details in this short introduction about free boundary hypersurface, see  \cite{ambrozio2015}.



We can settle the Jacobi operator 
\begin{eqnarray*}\label{jacobi}
L_{\Sigma}u= \Delta_{\Sigma}u + (Ric(N,\,N)  +|A_\Sigma|^2)u
\end{eqnarray*}
with boundary conditions
\begin{eqnarray*}
\frac{\partial u}{\partial\nu}-II(N,\,N)u=0\quad\mbox{on}\quad\partial\Sigma.
\end{eqnarray*}

We can characterize the solutions for the above problem as follows: Let $V_k$ denote the subspace spanned by the first $k$ eigenfunctions for the above system, then the value of the next eigenvalue $\lambda_{k+1}(L_\Sigma)$ equals the minimum of $Q$ on the $L^2(\Sigma,\,d\sigma)$ orthogonal complement of $V_k$, i.e., $$\lambda_{k+1}=\min_{u\in V^{\perp}_{k}\backslash\{0\}}\frac{Q(u,\,u)}{\int_\Sigma u^2da},$$
where 
\begin{eqnarray}\label{Q}
Q(u,\,u)=-\int_\Sigma u L_\Sigma u d\sigma+\int_{\partial\Sigma}u\left[\frac{\partial u}{\partial\nu} -II(N,N)u\right]da.
\end{eqnarray}

More precisely, if $\lambda_1<\lambda_2\leq\ldots\leq\lambda_k \ldots \to \infty$ is the list of eigenvalues of $L_\Sigma$ with boundary conditions inferred above (with repetitions) then the number of such negative eigenvalues coincides with the index of $\Sigma.$ A minimal surface with a free boundary is said to be stable if it
has nonnegative second variation $Q(u, u) \geq 0$, for all $u$.

\begin{proof}[{\bf Proof of Theorem \ref{propositionsplitting1}}]
By hypothesis $\Sigma$ is two-sided, a unit vector field $N$ along $\Sigma$ exists that is normal to $\Sigma$. We consider $X$ as the unit vector field on $\partial M$ that is normal to $\partial M$ and
points outside $M$. Since $\Sigma$ is free boundary, we obtain that the unit conormal $\nu$ of $\partial\Sigma$ that points outside $\Sigma$ coincides with $X$ along $\partial\Sigma$. Since $\Sigma$ is a minimal hypersurface, from \eqref{uhum} we have
$$L_{\Sigma}f=(\mu+\rho+|A_\Sigma|^2)f.$$ Therefore, since $\partial\Sigma=\Sigma\cap\partial M$ and $\partial M=\{f=c\}$ we can infer that $X=\frac{-\nabla f}{|\nabla f|}$ is the outward normal vector field. At this point, it is important to remember that Proposition \ref{propbene} holds true and we will use it from now on. Hence, from \eqref{Q} we obtain
\begin{eqnarray}\label{000000}
	Q(f,\,f) &=& -\int_\Sigma f L_\Sigma f+\int_{\partial\Sigma}f\frac{\partial f}{\partial\nu} -c^2\int_{\partial\Sigma}II(N,N)da\nonumber\\
	&=& -\int_\Sigma f L_\Sigma f-\int_{\partial\Sigma}f|\nabla f| -c^2\int_{\partial\Sigma}II(N,N)da\nonumber\\
	&=& -\int_\Sigma f L_\Sigma f-\kappa_1|\partial\Sigma| -c^2\int_{\partial\Sigma}II(N,N)da\nonumber\\
	&=& -\int_\Sigma f^2(\mu+\rho)-\int_\Sigma f^2|A_{\Sigma}|^2d\sigma -c^2\int_{\partial\Sigma}II(N,N)da - \kappa_1|\partial\Sigma|\nonumber\\
	&=& -\int_\Sigma f^2(\mu+\rho)-\int_\Sigma f^2|A_{\Sigma}|^2d\sigma -c^2\int_{\partial\Sigma}g(N,\,\nabla_{N}X)da -\kappa_1|\partial\Sigma|,
\end{eqnarray}
where $0<\kappa_1=f|\nabla f|$ at $\partial\Sigma.$ Remember, we are assuming $|\nabla f|$ constant in $\partial M = f^{-1}(c),$ and $\partial\Sigma\subset\partial M.$

So, again using \eqref{eq2} and \eqref{uhum} we have
\begin{eqnarray*}
	g(N,\,\nabla_NX) = g(N,\,\nabla_N\left(\frac{-\nabla f}{|\nabla f|}\right)) 
	&=& \frac{-1}{|\nabla f|}g(N,\,\nabla_N\nabla f) = \frac{-1}{|\nabla f|}\nabla^2f(N,\,N)=\frac{-1}{|\nabla f|}(\Delta f-\Delta_{\Sigma}f)\nonumber\\
	&=& \frac{-f}{|\nabla f|}\left[\bigg(\frac{n-2}{(n-1)}\mu+\frac{n}{n-1}\rho\bigg)+ Ric(N,\,N)-(\mu+\rho)\right]\nonumber\\
	&=& \frac{-f}{|\nabla f|}\left[ Ric(N,\,N)-\frac{(\mu-\rho)}{n-1}\right].\nonumber\\
\end{eqnarray*}
Combining $Q(f,\,f)$ with the last identity gives us
\begin{eqnarray*}
	Q(f,\,f) = -\int_\Sigma f^2(\mu+\rho)-\int_\Sigma f^2|A_{\Sigma}|^2d\sigma - \kappa_1|\partial\Sigma| + \kappa_2\int_{\partial\Sigma}\left[ Ric(N,\,N)-\frac{(\mu-\rho)}{n-1}\right]da,\nonumber
\end{eqnarray*}
where $\kappa_2=\frac{f^3}{|\nabla f|}$ at $\partial\Sigma.$ From Lemma \ref{lmasood_O}, we have $\rho$ constant at $\partial M.$ By hypothesis, consider $\mu$ constant.

Applying the triple $(\nabla f,\,N,\,N)$ into \eqref{cottonweyl} and assuming zero radial Weyl curvature we get
\begin{eqnarray*}
	f{(n-2)}(DSch)(\nabla f,\,N,\,N)=\underbrace{W(\nabla f,\,N,\,N,\,\nabla f)}_{=0}-\frac{1}{n-2}Ric(X,\,X)+\frac{R}{n-2}-\frac{n-1}{n-2}Ric(N,\,N).
\end{eqnarray*}
The Schouten tensor gives us
\begin{eqnarray*}
	(n-2)(DSch)(\nabla f,\,N,\,N) = (DRic)(\nabla f,\,N,\,N) - \frac{1}{2(n-1)}g(\nabla R,\,\nabla f).  
\end{eqnarray*}
Thus, if $DRic=0$ and $\mu$ is constant we get $Dsch=0.$ Therefore, combining the last two identities we can infer that
\begin{eqnarray*}
	Ric(N,\,N)-\frac{\mu}{n-1}+\frac{\rho}{n-1}=\frac{\mu}{n-1}+\frac{\rho}{n-1}-\frac{1}{n-1}Ric(X,\,X).
\end{eqnarray*}
Therefore, from Lemma \ref{interesting1} we can infer that $Ric(N,\,N)-\frac{(\mu-\rho)}{n-1}$ is constant at $\partial\Sigma.$ So,
\begin{eqnarray*}
	Q(f,\,f) = -\int_\Sigma f^2|A_{\Sigma}|^2d\sigma - \kappa_1|\partial\Sigma| + \kappa_2\left[ Ric(N,\,N)-\frac{(\mu-\rho)}{n-1}\right]|\partial\Sigma|.\nonumber
\end{eqnarray*}

Considering the Gauss equation, we have
\begin{eqnarray*}\label{ollha}
	\frac{(n-2)}{(n-1)}\mu + \frac{\rho}{(n-1)} - \frac{R_\Sigma}{2}-\frac{|A_{\Sigma}|^2}{2} = Ric(N,\,N) - \frac{(\mu-\rho)}{n-1}. 
\end{eqnarray*}
In fact, from Proposition \ref{propbene} and the above equation we conclude that $K^{\Sigma}=\dfrac{R_\Sigma}{2}$ is constant at $f^{-1}(c).$

Then, 

\begin{eqnarray}\label{operatorforSPF}
	Q(f,\,f) = -\int_\Sigma f^2|A_{\Sigma}|^2d\sigma - \kappa_1|\partial\Sigma| + \kappa_2\left[\frac{(n-2)}{(n-1)}\mu+\frac{\rho}{(n-1)} - \frac{R_\Sigma}{2}-\frac{|A_{\Sigma}|^2}{2}\right]|\partial\Sigma|.
\end{eqnarray}
Taking $n=3$ in \eqref{operatorforSPF} and assuming $\Sigma$ stable we have
\begin{eqnarray*}
	0\leq Q(f,\,f) &=& -\int_\Sigma f^2|A_{\Sigma}|^2d\sigma - (\kappa_1+\kappa_2K^\Sigma)|\partial\Sigma| - \kappa_2\frac{|A_{\Sigma}|^2}{2}|\partial\Sigma|+\kappa_2\left[\frac{(n-2)}{(n-1)}\mu+\frac{\rho}{(n-1)}\right]|\partial\Sigma|\nonumber\\
 &=& -\int_\Sigma f^2|A_{\Sigma}|^2d\sigma - (\kappa_1+\kappa_2K^\Sigma)|\partial\Sigma| - \kappa_2\frac{|A_{\Sigma}|^2}{2}|\partial\Sigma|+\frac{\kappa_2}{2}\left[\mu+\rho\right]|\partial\Sigma|,
\end{eqnarray*}
where $K^\Sigma$ is the Gauss curvature of $\Sigma$. Thus, 
\begin{eqnarray*}
	\left[\kappa_1+\kappa_2K^\Sigma- \frac{\kappa_2}{2}\left(\mu+\rho\right)\right]|\partial\Sigma|&\leq&\int_\Sigma f^2|A_{\Sigma}|^2d\sigma + (\kappa_1+\kappa_2K^\Sigma)|\partial\Sigma| \\
 &&+ \kappa_2\frac{|A_{\Sigma}|^2}{2}|\partial\Sigma| - \frac{\kappa_2}{2}\left[\mu+\rho\right]|\partial\Sigma|\leq0.\nonumber
\end{eqnarray*}

Hence,
\begin{eqnarray*}
	\frac{1}{2}(\mu+\rho)-\frac{\kappa_1}{\kappa_2}\geq K^\Sigma\quad\mbox{at}\quad\partial\Sigma=\Sigma\cap\partial M.
\end{eqnarray*}
Moreover, $Q(f,\,f)=0$ if $\Sigma$ is totally geodesic and $\dfrac{1}{2}(\mu+\rho)-\dfrac{\kappa^2}{c^2} = K^\Sigma$. Here, $\kappa=|\nabla f|$ at $\partial\Sigma$. We can conclude that if $\mu+\rho=0$, then $K^{\Sigma}$ is nonpositive.

Let us consider the three-dimensional case with $\Sigma$ being compact. Remember that the mean curvature of $\partial M$ denoted by $H_{\partial M}$ is the trace of the shape operator. The
free boundary hypothesis implies that $K^{g}$, the geodesic curvature of $\partial\Sigma$ in $\Sigma$, can be
computed as $K^g = g(T,\,\nabla_{T}\nu) = g(T,\,\nabla_{T}X)$, where $T$ is a unit vector field tangent to $\partial\Sigma$. In particular,
$H_{\partial M}= K^{g} + g(N,\,\nabla_{N}X)=K^{g}+II(N,\,N)$, see \cite[Proposition 6]{ambrozio2015}. So, from \eqref{000000} we have
\begin{eqnarray*}
	Q(f,\,f) &=& -\int_\Sigma f^2(\mu+\rho)-\int_\Sigma f^2|A_{\Sigma}|^2d\sigma -c^2\int_{\partial\Sigma}[H_{\partial M} - K^{g}]da -\kappa_1|\partial\Sigma|\nonumber\\
	&=& -\int_\Sigma f^2(\mu+\rho)-\int_\Sigma f^2|A_{\Sigma}|^2d\sigma +c^2\int_{\partial\Sigma}K^{g}da -(c^2H_{\partial M}+\kappa_1)|\partial\Sigma|,
\end{eqnarray*}
where we used that $H_{\partial M}$ is constant (i.e., photon surface).
Applying the Gauss-Bonnet theorem, i.e., 
\begin{eqnarray*}
	\int_{\Sigma}K^\Sigma d\sigma+\int_{\partial\Sigma}K^{g}da=2\pi\mathfrak{X}(\Sigma). 
\end{eqnarray*}
Hence, 
\begin{eqnarray*}
	0\leq Q(f,\,f) =-\int_\Sigma f^2(\mu+\rho) -\int_\Sigma f^2|A_{\Sigma}|^2d\sigma -c^2\int_{\Sigma}K^\Sigma d\sigma +2c^2\pi\mathfrak{X}(\Sigma)-(c^2H_{\partial M}+\kappa_1)|\partial\Sigma|.
\end{eqnarray*}
Considering $\mu+\rho=0$ at $\Sigma$ we have $K^\Sigma\leq0$ at $\partial\Sigma$. Thus,
\begin{eqnarray*}
	2c^2\pi\mathfrak{X}(\Sigma)\geq(c^2H_{\partial M}+\kappa_1)|\partial\Sigma| + \int_\Sigma f^2|A_{\Sigma}|^2d\sigma + \int_\Sigma f^2(\mu+\rho) + c^2\int_{\Sigma}K^\Sigma d\sigma, 
\end{eqnarray*}
where $\kappa_1=f|\nabla f|\Big|_{\partial\Sigma}$ is constant. Considering $\kappa=|\nabla f|$ at $\partial\Sigma$, we obtain
\begin{eqnarray*}
	2\pi\mathfrak{X}(\Sigma)\geq \left(H_{\partial M}+\frac{\kappa}{c}\right)|\partial\Sigma|+ \int_{\Sigma}K^\Sigma d\sigma.
\end{eqnarray*}

At this point, we know that $K^{\Sigma}$ is constant (and non-positive) at $\partial\Sigma$. Since $\Sigma$ is compact, we can conclude that
$$0\geq K^{\Sigma}\geq  \min_{\Sigma}K^{\Sigma}.$$ Consider $\beta=|\min_{\Sigma}K^{\Sigma}|$ to obtain
\begin{eqnarray*}
	2\pi\mathfrak{X}(\Sigma)\geq \left(H_{\partial M}+\frac{\kappa}{c}\right)|\partial\Sigma| - \beta|\Sigma|.
\end{eqnarray*}

\end{proof}

\section{A static stellar model inspired by Witten's black hole}

	We will show a method inspired by \cite{barboza2018} to furnish spherical symmetric static perfect fluid solutions. In particular, we will provide an interesting new model inspired by Witten's black hole. The main contribution of this method is that it not requires any equation of state in the process like in the TOV solutions.
 
 The black hole model for the two-dimensional string theory proposed by Witten \cite{witten} is $\mathbb{R}\times\mathbb{S}$ with a metric tensor 
	\begin{eqnarray}\label{witten01}
	ds^{2}=dr^{2}+\tanh^{2}(r)d\theta^{2}.
	\end{eqnarray}
	 By making a proper choice of coordinates, we have another form to see the above metric: $$ds^{2}=\frac{1}{4r^{2}}\left(1-\frac{\mathfrak{M}}{r}\right)^{-1}dr^{2}+\left(1-\frac{\mathfrak{M}}{r}\right)d\theta^{2},$$
  where $\mathfrak{M}$ is a constant which represents the mass.

It is important to point out that in $r=0$ the metric \eqref{witten01} collapses into a point. This black hole model is also known as the Hamilton cigar, inspired by the two-dimensional steady Ricci soliton discovered by Richard Hamilton in its studies of Ricci flow. Our intention is to provide an $n$-dimensional static perfect fluid space model inspired by Witten's black hole.
 
 To that end, considering $(\mathbb{R}^{n}, g)$ to be the standard Euclidean space with metric $g$ and Cartesian coordinates $x=(x_{1}, \cdots, x_{n})$, with $g_{ij} = \delta_{ij}$, $1\leq i, j\leq n$, where $\delta_{ij}$ is the delta Kronecker. Let $r=\left(\displaystyle\sum_{i}\tau x_{i}^{2}+ \alpha_{i}x_{i}+\beta_{i}\right)$, 	where $\tau, \, \alpha_{i}, \, \beta_{i} \in \mathbb{R}$. Consider the metric tensor $\bar{g}=\frac{g}{\varphi^{2}(r)}$ and $f(r)$ satisfying the equation
	\begin{eqnarray}\label{spfe}
	f\mathring{R}ic_{\bar{g}}=\mathring{\nabla}^2_{\bar{g}}f,
	\end{eqnarray}
	where $\mathring{R}ic_{\bar{g}}$ and $\mathring{\nabla}^2_{\bar{g}}$ are the Ricci and the Hessian traceless tensors for the metric $\bar{g}$. Therefore, we are considering spherically symmetric static perfect fluid space-times.

First, we will prove the reduced ODE for a spherically symmetric and locally conformally flat perfect fluid space-time satisfying \eqref{spfe} and 
\begin{eqnarray}\label{densitypressure}
	8\pi\mu=\frac{R_{g}}{2}-\Lambda\quad\mbox{and}\quad 8\pi\rho=\left(\frac{n-1}{n}\right)\left[\frac{\Delta_{g}f}{f}-\frac{(n-2)}{2(n-1)}R_{g}\right]+\Lambda.
	\end{eqnarray}
In what follows, we will consider a non-null cosmological constant $\Lambda.$ From now on, we will indicate the metric we are working on to avoid confusion during the upcoming changes.

 We explore the conformal structure of the spatial factor. To start with, we present the Ricci formula (cf. \cite{Besse})
	\begin{eqnarray}\label{conformal_ricci_tensor}
	Ric_{\bar{g}}=\frac{1}{\varphi^2}\left\{(n-2)\varphi\nabla^{2}_{g}(\varphi)+[\varphi\Delta_{g}\varphi-(n-1)|\nabla_{g}\varphi|^2]g\right\},
	\end{eqnarray}
	where $\nabla_{g}\varphi$, $\nabla^{2}_{g}\varphi$ and $\Delta_{g}\varphi$ stand for the gradient, Hessian and Laplacian of $\varphi$ with respect to $g$, respectively. From the above formula, we get that the scalar curvature
	\begin{eqnarray}\label{conformal_scalar_curvature}
	R_{\bar{g}}=(n-1)\left[2\varphi\Delta_{g}\varphi-n|\nabla_{g}\varphi|^2\right].
	\end{eqnarray}
	
	In what follows, we denote by $\varphi_{,{i}}$, $f_{,{i}}$, $\varphi_{,{i}{j}}$ and $f_{,{i}{j}}$ the first and second order derivatives of $\varphi$ and $f$, with respect to $x_{i}$ and $x_{j}$.
	
	Remember that
	$${\nabla}^2_{\bar{g}}(f)_{ij}=f_{,ij}-\displaystyle\sum_{k}\bar{\Gamma}^{k}_{ij}f_{,k},$$
	where $\bar{\Gamma}^{k}_{ij}$ are the Christoffel symbols of the metric $\bar{g}$. For $i,\,j,\,k$ distinct, we have
	$$\bar{\Gamma}^{k}_{ij}=0,\quad\bar{\Gamma}^{i}_{ij}=-\frac{\varphi_{,j}}{\varphi},
	\quad\bar{\Gamma}^{k}_{ii}=\frac{\varphi_{,k}}{\varphi},\quad\bar{\Gamma}^{i}_{ii}=-\frac{\varphi_{,i}}{\varphi}.$$
	Hence,
	\begin{eqnarray}\label{hess i dif j}
	{\nabla}^2_{\bar{g}}(f)_{ij} = f_{,ij} + \frac{\varphi_{,i}f_{,j}}{\varphi} + \frac{\varphi_{,j}f_{,i}}{\varphi};\quad i\neq j.
	\end{eqnarray}
	Similarly, by considering $i=j$, we have that
	$${\nabla}^2_{\bar{g}}(f)_{ii} = f_{,ii} + 2\frac{\varphi_{,i}f_{,i}}{\varphi} - \displaystyle\sum_{k}\frac{\varphi_{,k}}{\varphi}f_{,k}.$$
	
	Now, considering that $\bar{g}=\frac{1}{\varphi^{2}}g$ is a conformal metric where $g=\delta_{ij}$. We are assuming that $\varphi(r)$ and $f(r)$ are functions of $r$. Hence, we have
	\begin{eqnarray}\label{derivada1}
	\varphi_{,{i}}=\varphi'r_{,i};\quad \varphi_{,{i}{j}}=\varphi''r_{,i}r_{,j}+\varphi'r_{,ij};\quad\varphi_{,{i}{i}}
	=\varphi''r_{,i}^{2}+\varphi'r_{,ii}
	\end{eqnarray}
	and
	\begin{eqnarray}\label{derivada2}
	f_{,{i}}=f'r_{,i};\quad f_{,{i}{j}}=f''r_{,i}r_{,j}+f'r_{,ij};\quad f_{,{i}{i}}
	=f''r_{,i}^{2}+f'r_{,ii},
	\end{eqnarray}
	where $$\varphi'=\frac{d\varphi}{dr}\quad\mbox{and}\quad f'=\frac{df}{dr}.$$

	The following result provides the PDE for a static perfect fluid that is locally conformal to an Euclidean space.

	\begin{lemma}\label{lemma1}
		Let $(\mathbb{R}^{n}, g)$ be an Euclidean space, $n\geq3$, with Cartesian coordinates $(x_{1},\ldots , x_{n})$ and metric components $g_{ij} = \delta_{ij}$. Let $O\subset\mathbb{R}^{n}$ be open and $f :O\rightarrow (0,+\infty)$ be smooth. Then there exists a metric $\bar{g} = \frac{g}{\varphi^{2}}$ on $O$ for which $(O, \bar{g}, f)$ satisfies the static perfect fluid equation (\ref{spfe}) if, and only if, the functions $\varphi$ and $f$ satisfy 
		\begin{equation}\label{41}
		(n-2)f\varphi_{,{i}{j}}-\varphi{f}_{,{i}{j}} - \varphi_{,{i}}f_{,{j}}- \varphi_{,{j}}f_{,{i}}=0,\quad i\neq j
		\end{equation}
		and for each $i=j$
		\begin{eqnarray}\label{42}
		n[(n-2)f\varphi_{,ii}-\varphi{f}_{,ii}-2\varphi_{,i}f_{,i}]=\displaystyle\sum_{k}\left[(n-2)f\varphi_{,kk}-\varphi{f}_{,kk}-2\varphi_{,k}f_{,k}\right].
		\end{eqnarray}
	\end{lemma}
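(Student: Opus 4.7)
The strategy is to rewrite (\ref{spfe}) as the statement that
$$T := fRic_{\bar{g}} - \nabla^2_{\bar{g}} f = \lambda \bar{g}$$
for some scalar function $\lambda$ (explicitly, $\lambda = (fR_{\bar{g}} - \Delta_{\bar{g}} f)/n$), and then to express this tensorial identity component by component in the Euclidean coordinate basis. Because $\bar{g}_{ij} = \delta_{ij}/\varphi^{2}$ is diagonal in these coordinates, the single condition $T = \lambda \bar{g}$ decouples into two independent requirements: the off-diagonal entries $T_{ij}$ must vanish for $i \neq j$, and the rescaled diagonal entries $\varphi^{2} T_{ii}$ must be independent of the index $i$.

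For the off-diagonal part, I would read off from (\ref{conformal_ricci_tensor}) that $(Ric_{\bar{g}})_{ij} = (n-2)\varphi_{,ij}/\varphi$ whenever $i \neq j$, combine this with the Hessian formula (\ref{hess i dif j}), and multiply through by $\varphi$. The vanishing of the resulting expression is precisely equation (\ref{41}).

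For the diagonal part, I would compute $\varphi^{2} T_{ii}$ term by term using (\ref{conformal_ricci_tensor}) and the diagonal analogue of (\ref{hess i dif j}) stated in the excerpt. All contributions involving $\varphi\Delta_{g}\varphi$, $|\nabla_{g}\varphi|^{2}$ and the full sum $\sum_{k}\varphi_{,k}f_{,k}$ are manifestly symmetric in the indices and hence collect into a common constant $C$ (independent of $i$). The remaining $i$-dependent piece takes the clean form $\varphi \, U_{i}$ with
$$U_{i} := (n-2)f\varphi_{,ii} - \varphi f_{,ii} - 2\varphi_{,i}f_{,i}.$$
Thus $\varphi^{2} T_{ii}$ is independent of $i$ if and only if $U_{i}$ is independent of $i$, which is equivalent to the symmetrized identity $nU_{i} = \sum_{k}U_{k}$, i.e., equation (\ref{42}).

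The main obstacle is purely bookkeeping: one must carefully identify which terms in $\varphi^{2}(Ric_{\bar{g}})_{ii}$ and $\varphi^{2}(\nabla^{2}_{\bar{g}}f)_{ii}$ are symmetric under permutations of the indices (and hence absorbed into the constant $C$) and which genuinely depend on $i$ (and hence combine into $\varphi U_{i}$). Once this algebra is laid out, both implications follow immediately since every step in the derivation is reversible, giving the if and only if characterization.
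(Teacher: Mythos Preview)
Your proposal is correct and follows essentially the same approach as the paper. For the off-diagonal part your argument is identical to the paper's; for the diagonal part the paper computes the traceless Ricci and traceless Hessian directly via (\ref{conformal_trace_free_ricci_tensor_coordinates}) and (\ref{Hessian_traceless_tensor}) and equates them, whereas you phrase the same condition as ``$\varphi^{2}T_{ii}$ independent of $i$'' and separate out the $i$-symmetric terms---but this is merely a cosmetic reorganization of the same algebra, using the same conformal formulas (\ref{conformal_ricci_tensor}) and (\ref{hess i dif j}).
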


	\begin{proof}
	Consider $i\neq j$. Hence, from \eqref{spfe} we have
	\begin{eqnarray*}
		f\left(Ric_{\bar{g}}\right)_
		{ij}=\nabla^{2}_{\bar{g}}(f)_{ij}.
	\end{eqnarray*}
	Then, from \eqref{conformal_ricci_tensor} we obtain that
	\begin{eqnarray*}
		\left(Ric_{\bar{g}}\right)_{ij}=(n-2)\frac{\varphi_{,ij}}{\varphi}.
	\end{eqnarray*}
	Thus, from the above two equations and from the Hessian formula for a conformal metric given by \eqref{hess i dif j}, we obtain \eqref{41}.

	Now, from \eqref{conformal_ricci_tensor} and \eqref{conformal_scalar_curvature} we have
	\begin{eqnarray}\label{conformal_trace_free_ricci_tensor}
	\left(Ric_{\bar{g}}-\frac{R_{\bar{g}}}{n}\bar{g}\right)_{ii}&=&\frac{(n-2)}{\varphi}\left(\nabla^{2}_{g}\varphi-\frac{\Delta_{g}\varphi}{n}g\right)_{ii}.
	\end{eqnarray}
	In a local coordinate system, we have that \eqref{conformal_trace_free_ricci_tensor} became 
	\begin{eqnarray}\label{conformal_trace_free_ricci_tensor_coordinates}
	\left(Ric_{\bar{g}}-\frac{R_{\bar{g}}}{n}\bar{g}\right)_{ii}=\frac{(n-2)}{\varphi}\left(\varphi_{,ii}-\frac{1}{n}\displaystyle\sum_{k}\varphi_{,kk}\right).
	\end{eqnarray}
	On the other hand, for $i=j$, 
	\begin{eqnarray}\label{Hessian_traceless_tensor}
	\nabla^{2}_{\bar{g}}(f)_{ii}-\frac{\Delta_{\bar{g}}f}{n}\bar{g}_{ii}=f_{,ii}+2\frac{\varphi_{,i}f_{,i}}{\varphi}-\displaystyle\sum_{k}\left[\frac{f_{,kk}}{n}-\frac{(n-2)}{n}\left(\frac{\varphi_{,k}f_{,k}}{\varphi}\right)+\frac{\varphi_{,k}f_{,k}}{\varphi}\right].
	\end{eqnarray}
	Then, \eqref{spfe}, \eqref{conformal_trace_free_ricci_tensor_coordinates} and \eqref{Hessian_traceless_tensor} give us \eqref{42}.
	\end{proof}

The theorem below will prove that even in locally conformally flat static perfect fluid spaces, the chance of getting examples is too high. Therefore, obtaining classification results in the conformally flat case is still relevant.

 \begin{theorem}\label{theorem5}
		Let $(\mathbb{R}^{n}, g)$ be a Euclidean space, $n\geq3$, with Cartesian coordinates $(x_{1},\ldots , x_{n})$ and metric components $g_{ij} = \delta_{ij}$. Let $O\subset\mathbb{R}^{n}$ be open and $f :O\rightarrow (0,+\infty)$ be smooth function. Then, there exists a metric $\bar{g} = \frac{g}{\varphi^{2}}$ on $O$ for which $(O, \bar{g}, f)$ satisfies the static perfect fluid equation (\ref{spfe}) such that $f(r)$ and $\varphi(r)$ are invariant under an $(n-1)$-dimensional orthogonal group whose basic invariant is $r$ if, and only if,
		\begin{eqnarray}\label{M1}
		(n-2)f\varphi''-f''\varphi-2\varphi'f'=0.
		\end{eqnarray}
		Moreover, 
		\begin{eqnarray}\label{den}
		8\pi\mu&=&\frac{(n-1)}{2}\left\{4n\tau\varphi\varphi'+[2\varphi\varphi''-n(\varphi')^{2}](4\tau r+ C)\right\}-\Lambda
		\end{eqnarray}
		and
		\begin{eqnarray}\label{press}
		8\pi\rho&=&\left(\frac{n-1}{n}\right)\Bigg\{\left[(n-2)\varphi\varphi''-\frac{n}{2}(n-2)(\varphi')^{2}-n\varphi\varphi'\frac{f'}{f}\right](4\tau r+ C)\nonumber\\
		&+&2n\tau\frac{\varphi}{f}\left[f'\varphi -(n-2)f\varphi'\right]\Bigg\}+\Lambda
		\end{eqnarray}
		stands for the energy density and pressure, respectively. Here,  $C=\displaystyle\sum_{i} (\alpha_{i}^2 -4\tau \beta_{i})$.
	\end{theorem}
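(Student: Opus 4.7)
The strategy is to substitute the rotationally symmetric ansatz $\varphi=\varphi(r)$, $f=f(r)$, with $r=\sum_i(\tau x_i^2+\alpha_i x_i+\beta_i)$, into the PDE system \eqref{41}--\eqref{42} supplied by Lemma \ref{lemma1}, and then to convert the result into the explicit formulas for $\mu$ and $\rho$ via \eqref{densitypressure}. First I would record the partials of $r$: $r_{,i}=2\tau x_i+\alpha_i$, $r_{,ij}=0$ for $i\neq j$, and $r_{,ii}=2\tau$. Using \eqref{derivada1}--\eqref{derivada2}, equation \eqref{41} off the diagonal becomes
\begin{eqnarray*}
\bigl[(n-2)f\varphi''-\varphi f''-2\varphi'f'\bigr]\, r_{,i}r_{,j}=0,
\end{eqnarray*}
which, since $r_{,i}r_{,j}$ is generically nonzero, forces the ODE \eqref{M1}. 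For the trace equation \eqref{42}, the terms splitting as $(\text{bracketed factor})\cdot r_{,i}^2$ cancel against their counterparts on the right-hand side precisely when \eqref{M1} holds, and the $2\tau$-correction terms coming from $r_{,ii}$ match trivially after summation; hence \eqref{M1} is both necessary and sufficient for the system.

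Next I would exploit the crucial identity
\begin{eqnarray*}
\sum_{i}r_{,i}^2=\sum_{i}(2\tau x_i+\alpha_i)^2=4\tau\sum_i(\tau x_i^2+\alpha_i x_i)+\sum_i\alpha_i^2=4\tau r+C,
\end{eqnarray*}
where $C=\sum_i(\alpha_i^2-4\tau\beta_i)$ absorbs the $\beta_i$ contributions. This immediately gives $|\nabla_g\varphi|^2=(\varphi')^2(4\tau r+C)$ and $\Delta_g\varphi=\varphi''(4\tau r+C)+2n\tau\varphi'$. Plugging these into the conformal formula \eqref{conformal_scalar_curvature} for $R_{\bar g}$ and applying $8\pi\mu=R_{\bar g}/2-\Lambda$ from \eqref{densitypressure} yields \eqref{den} directly.

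For the pressure \eqref{press}, I would use the standard conformal transformation law
\begin{eqnarray*}
\Delta_{\bar g}f=\varphi^2\Delta_g f-(n-2)\varphi\,\langle\nabla_g\varphi,\nabla_g f\rangle_g,
\end{eqnarray*}
together with $\Delta_g f=f''(4\tau r+C)+2n\tau f'$ and $\langle\nabla_g\varphi,\nabla_g f\rangle_g=\varphi'f'(4\tau r+C)$, and then combine $\Delta_{\bar g}f/f$ with $(n-2)R_{\bar g}/(2(n-1))$ as dictated by \eqref{densitypressure}. Finally I would invoke the ODE \eqref{M1} to eliminate $f''$ via $\varphi^2 f''/f=(n-2)\varphi\varphi''-2\varphi\varphi'f'/f$, collect the $(4\tau r+C)$-factor and the $2n\tau$-factor separately, and match the resulting expression against \eqref{press}. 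The whole argument is a computation; the only delicate step is the algebraic collapse at the end, and the mechanism that makes it work is precisely the identity $\sum_i r_{,i}^2=4\tau r+C$, which cleanly decouples the ``radial'' and the ``$\tau$-linear'' contributions so that the ODE \eqref{M1} absorbs all cross terms.
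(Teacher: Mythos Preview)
Your proposal is correct and follows essentially the same route as the paper: substitute the ansatz into the PDE system of Lemma~\ref{lemma1}, use $r_{,ij}=0$ for $i\neq j$ to reduce \eqref{41} to the ODE \eqref{M1}, compute $\sum_i r_{,i}^2=4\tau r+C$ and $\sum_i r_{,ii}=2n\tau$, and then feed these into the conformal formulas \eqref{conformal_scalar_curvature} and the conformal Laplacian to obtain \eqref{den} and \eqref{press}. You are in fact slightly more careful than the paper in explicitly checking that \eqref{42} is also equivalent to \eqref{M1} (the paper passes over this), and your use of \eqref{M1} to eliminate $f''$ in the pressure computation is exactly the substitution the paper performs in its last displayed step.
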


 \begin{remark}
In other words, the above theorem shows that you will get an ODE if you pick either $f$ or $\varphi$ and put it in the equation \eqref{M1}. By integration (if possible), we get a static stellar model.
 \end{remark}

 	\noindent {\bf Proof of Theorem \ref{theorem5}:}
	Since the basic invariant is of the form $r=\sum_{k} U_{k}(x_{k})$, where $U_{k}(x_{k})=\tau x_k^2+\alpha_k x_k + \beta_k$, from \eqref{derivada1} and \eqref{derivada2} and \eqref{41} we have 
	\begin{eqnarray}\label{eq111}
	\left[(n-2)f\varphi''-f''\varphi-2\varphi'f'\right]r_{,i}r_{,j}+[(n-2)f\varphi'-\varphi f']r_{,ij}=0;\quad\forall\, i\neq j.
	\end{eqnarray}
 Considering
	\begin{eqnarray}\label{eq-for-F}
	F(r)=\frac{r_{,ij}}{r_{,i}r_{,j}}
	\end{eqnarray}
	  from \eqref{eq111} and \eqref{eq-for-F} we have 
	\begin{eqnarray}\label{bita}
	\left[(n-2)f\varphi''-f''\varphi-2\varphi'f'\right]+F[(n-2)f\varphi'-\varphi f']=0
	\end{eqnarray}
 which provides \eqref{M1}. Moreover,  
	\begin{eqnarray}\label{derivatives}
	\displaystyle\sum_{k}r_{,k}^{2}&=&\displaystyle\sum_{k}(U^{\prime}_{k})^{2}= 	\displaystyle\sum_{k}\left(2\tau x_{x}+\alpha_{k}^{2}\right)= 	\displaystyle\sum_{k}\left(4\tau^{2}x_{k}^{2}+4\tau\alpha_{k}x_{k}+\alpha_{k}^{2}\right) \nonumber \\
	&=&4\tau\left(	\displaystyle\sum_{k}\tau x_{k}^{2}+\alpha_{k}x_{k}+\beta_{k}\right) - \displaystyle\sum_{k}4\tau\beta_{k}+\displaystyle\sum_{k}\alpha_{k}^{2} \nonumber \\
	&=& 4\tau r+C
	\end{eqnarray}
 and
	\begin{eqnarray}
\displaystyle\sum_{k}r_{,kk}=\displaystyle\sum_{k}U_{k}^{\prime\prime} =	\displaystyle\sum_{k}\left(2\tau\right)=2n\tau,
	\end{eqnarray}
	where $C=\displaystyle\sum_{k} (\alpha_{k}^{2} -4\tau \beta_{k})$.
	
	Let us prove the formulae for energy density and pressure of a static perfect fluid space. From \eqref{conformal_scalar_curvature} and \eqref{derivatives} we have
	\begin{eqnarray}\label{scalar_curvature_pseudo-orthogonal}
	R_{\bar{g}}&=&(n-1)\left(2\varphi\displaystyle\sum_{k}\varphi_{,kk}-n\displaystyle\sum_{k}\varphi_{,k}^{2}\right)\nonumber\\
	&=&(n-1)\left\{2\varphi\displaystyle\sum_{k}(\varphi''r_{,k}^{2}+\varphi'r_{,kk})-n(\varphi')^{2}\displaystyle\sum_{k}r_{,k}^{2}\right\}\nonumber\\
	&=&(n-1)\left\{2\varphi\varphi'\displaystyle\sum_{k}r_{,kk}+[2\varphi\varphi''-n(\varphi')^{2}]\displaystyle\sum_{k}r_{,k}^{2}\right\}\nonumber\\
	&=&(n-1)\left\{4n\tau\varphi\varphi'+[2\varphi\varphi''-n(\varphi')^{2}](4\tau r+ C)\right\}.
	\end{eqnarray}
	Therefore, it is easy to see from \eqref{densitypressure} that the energy density is given by \eqref{den}, i.e.,
 from (\ref{scalar_curvature_pseudo-orthogonal}) we have
	\begin{eqnarray*}    	8\pi\mu&=&\frac{\left(n-1\right)}{2}\left\{4n\tau\varphi\varphi' +\left(4\tau r+\Lambda\right)\left(2\varphi\varphi''-n\left(\varphi'\right)^{2}\right) \right\}-\Lambda.                                        \end{eqnarray*}
	
	Moreover, it is well-known that 
	\begin{eqnarray*}
		\nabla^{2}_{\bar{g}}(f)_{ii}=f_{,ii}+2\frac{\varphi_{,i}f_{,i}}{\varphi}-\displaystyle\sum_{k}\frac{\varphi_{,k}f_{,k}}{\varphi}.
	\end{eqnarray*}
	Therefore, by definition we have that $\Delta_{\bar{g}}f=\displaystyle\sum_{k}\varphi^{2}\nabla^{2}_{\bar{g}}(f)_{kk}$ which implies that
	\begin{eqnarray}\label{laplacian_pseudo-orthogonal}
	\Delta_{\bar{g}}f&=&\displaystyle\sum_{k}\varphi^{2}\left[f_{,kk}-(n-2)\frac{\varphi_{,k}f_{,k}}{\varphi}\right]\nonumber\\
	&=&\displaystyle\sum_{k}\varphi^{2}\left[f''r_{,k}^{2}+f'r_{,kk}-(n-2)\frac{\varphi'f'}{\varphi}r_{,k}^{2}\right]\nonumber\\
	&=&[f''\varphi^{2}-(n-2)\varphi\varphi'f']\displaystyle\sum_{k}r_{,k}^{2}+\varphi^{2}f'\displaystyle\sum_{k}r_{,kk}\nonumber\\
	&=&[f''\varphi^{2}-(n-2)\varphi\varphi'f'](4\tau r+ C)+2n\tau\varphi^{2}f'.
	\end{eqnarray}
	
	Now, from \eqref{densitypressure}, \eqref{scalar_curvature_pseudo-orthogonal} and \eqref{laplacian_pseudo-orthogonal} we obtain
	\begin{eqnarray*}
		8\pi\rho&=&\left(\frac{n-1}{n}\right)\Big\{[\frac{f''}{f}\varphi^{2}-(n-2)\varphi\varphi'\frac{f'}{f}](4\tau r+ C)+2n\tau\varphi^{2}\frac{f'}{f}\nonumber\\
		&-&\frac{(n-2)}{2}\left\{4n\tau\varphi\varphi'+[2\varphi\varphi''-n(\varphi')^{2}](4\tau r+ C)\right\}\Big\}+\Lambda\nonumber\\
		&=&\left(\frac{n-1}{n}\right)\Big\{[\frac{f''}{f}\varphi^{2}-(n-2)\varphi\varphi'\frac{f'}{f}](4\tau r+ C)+2n\tau\varphi^{2}\frac{f'}{f}\nonumber\\
		&-&2(n-2)n\tau\varphi\varphi'+[(n-2)\varphi\varphi''-\frac{n}{2}(n-2)(\varphi')^{2}](4\tau r+ C)\Big\}+\Lambda\nonumber\\
		&=&\left(\frac{n-1}{n}\right)\Bigg\{\Big[(n-2)\varphi\varphi''-\frac{n}{2}(n-2)(\varphi')^{2}+\frac{f''}{f}\varphi^{2}-(n-2)\varphi\varphi'\frac{f'}{f}\Big](4\tau r+ C)\nonumber\\
		&+&2n\tau\varphi\Big[\varphi\frac{f'}{f}-(n-2)\varphi'\Big]\Bigg\}+\Lambda\nonumber\\
		&=&\left(\frac{n-1}{n}\right)\Bigg\{\left[(n-2)\varphi\varphi''-\frac{n}{2}(n-2)(\varphi')^{2}-n\varphi\varphi'\frac{f'}{f}\right](4\tau r+ C)\nonumber\\
		&+&2n\tau\frac{\varphi}{f}\left[f'\varphi -(n-2)f\varphi'\right]\Bigg\}+\Lambda.\nonumber\\
	\end{eqnarray*}

	\hfill $\Box$

	\begin{theorem}\label{coro11111}
		Let $M=I\times\mathbb{S}^{n-1}$, where $I\subset\mathbb{R},$ be a Riemannian manifold with metric tensor $g=dr^2+\tanh(r)^2\,g_{\mathbb{S}^{n-1}}$, where $g_{\mathbb{S}^{n-1}}$ stands by the canonical metric of $\mathbb{S}^{n-1}$ and $r\in I$. Then, $(M^n,\,g,\,f)$ is a static perfect fluid space with lapse function $$f(r)=
  A\sin\left(\sqrt{n-2}\log\cosh\,r\right)+B\cos\left(\sqrt{n-2}\log\cosh\,r\right);\quad\mbox{where}\quad A,\,B\in\mathbb{R}.$$  
  Moreover, the static perfect fluid space $(M^n,\,g,\,f)$ has positive sectional curvature, and the metric $g$ extends to a smooth metric through the origin.
	\end{theorem}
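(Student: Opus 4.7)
The plan is to invoke Theorem \ref{theorem5}. First I observe that the warped product metric $g=dr^2+\tanh^2(r)\,g_{\mathbb{S}^{n-1}}$ is conformally Euclidean: introducing Cartesian coordinates on $\mathbb{R}^n$ with $|x|=\sinh(r)$, one has $d|x|^2+|x|^2 g_{\mathbb{S}^{n-1}}=\cosh^2(r)\,g$, so $g=\cosh^{-2}(r)\,\delta$ where $\delta$ is the flat metric. Hence, in the notation of Theorem \ref{theorem5}, the conformal factor is $\varphi=\cosh(r)=\sqrt{1+|x|^2}$, and the basic invariant is $\sigma:=|x|^2=x_1^2+\cdots+x_n^2$, which corresponds to $\tau=1$, $\alpha_i=\beta_i=0$, so $C=0$.

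The equation \eqref{M1} from Theorem \ref{theorem5} then reads as an ODE for $f=f(\sigma)$. Using $\varphi(\sigma)=(1+\sigma)^{1/2}$, $\varphi'(\sigma)=\tfrac{1}{2}(1+\sigma)^{-1/2}$, $\varphi''(\sigma)=-\tfrac{1}{4}(1+\sigma)^{-3/2}$ and clearing denominators, $(n-2)f\varphi''-\varphi f''-2\varphi' f'=0$ becomes
\begin{equation*}
(1+\sigma)\,f''(\sigma)+f'(\sigma)+\frac{n-2}{4(1+\sigma)}\,f(\sigma)=0.
\end{equation*}
The substitution $u=\log(1+\sigma)=2\log\cosh(r)$, valid since $1+\sigma=1+\sinh^2(r)=\cosh^2(r)$, turns this into the constant-coefficient ODE
\begin{equation*}
\ddot{f}(u)+\frac{n-2}{4}\,f(u)=0,
\end{equation*}
whose general solution $f=A\sin\!\left(\tfrac{\sqrt{n-2}}{2}u\right)+B\cos\!\left(\tfrac{\sqrt{n-2}}{2}u\right)$ yields the announced formula once one writes $u/2=\log\cosh(r)$.

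For the curvature claim, I would apply the standard warped-product sectional curvature formulas to $g=dr^2+\psi(r)^2 g_{\mathbb{S}^{n-1}}$ with $\psi(r)=\tanh(r)$. The planes containing $\partial_r$ have sectional curvature $-\psi''/\psi=2\,\mathrm{sech}^2(r)>0$, while planes tangent to the sphere factor have sectional curvature $(K_{\mathbb{S}^{n-1}}-\psi'^{\,2})/\psi^2=(1-\mathrm{sech}^4(r))/\tanh^2(r)=1+\mathrm{sech}^2(r)>0$. Finally, smoothness of $g$ at the origin follows from the classical criterion that a warped product $dr^2+\psi(r)^2 g_{\mathbb{S}^{n-1}}$ extends smoothly through $r=0$ provided $\psi$ is smooth, odd, with $\psi'(0)=1$; these hold for $\psi=\tanh$.

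The only nontrivial step is recognizing the conformal representation $g=\cosh^{-2}(r)\,\delta$ with the right $(\varphi,\sigma)$ so that Theorem \ref{theorem5} applies, after which the ODE analysis is a one-line change of variables; the sectional curvature and smooth-extension statements are immediate from standard warped-product facts.
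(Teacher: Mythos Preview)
Your proof is correct and follows essentially the same route as the paper: identify the conformal representation $g=\varphi^{-2}\delta$ with $\varphi=\sqrt{1+|x|^2}$, invoke Theorem~\ref{theorem5} with $\tau=1$, $\alpha_i=\beta_i=0$, and reduce \eqref{M1} to a constant-coefficient second-order ODE in $u=\log(1+\sigma)$. The only notable difference is that the paper computes the sectional curvatures via the conformal formula \eqref{curvaturesectionalconformalmetric} in Cartesian coordinates and cites a lemma from Chow et al.\ for the smooth extension, whereas you use the warped-product sectional curvature formulas and the standard $\psi$-odd, $\psi'(0)=1$ criterion directly; your computation is a bit cleaner and self-contained, but the substance is the same.
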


	\begin{proof}[{\bf Proof of Theorem \ref{coro11111}.}]
 Let $\varphi : \mathbb{R}^n \rightarrow \mathbb{R}$ and $f : \mathbb{R}^n \rightarrow \mathbb{R}$, $n\geq3$, be functions given by
		\begin{eqnarray} \label{conformal-particular1}
		\varphi(x_{1},\ldots,x_{n})=\left[\displaystyle\sum_{\ell=1}^{n}x_{\ell}^{2}+1\right]^{1/2}
		\end{eqnarray}
  and
		\begin{eqnarray} \label{lapse-particular1}
		f(x_1,\,x_2,\,\ldots,\,x_n)&=&A\sin\left(\frac{\sqrt{n-2}}{2}\log(\displaystyle\sum_{\ell=1}^{n}x_{\ell}^{2}+1)\right)+B\cos\left(\frac{\sqrt{n-2}}{2}\log(\displaystyle\sum_{\ell=1}^{n}x_{\ell}^{2}+1)\right).
		\end{eqnarray}
		Then $\left(\mathbb{R}^n,\,\dfrac{\delta_{ij}}{\varphi^2},\,f \right)$ is a static perfect fluid space with positive sectional curvature. Here, \,$A,\,B\in\mathbb{R}$ and $\delta_{ij}$ is the delta Kronecker.
  
	In order, we are free to choose any conformal function we want in Theorem \ref{theorem5}. Moreover, we can choose $\tau=1$ and $\alpha_{i}=\beta_{i}=0$ for all $i$. So, we consider $\varphi$ given by \eqref{conformal-particular1}, and then we apply this function into \eqref{M1} to obtain \eqref{lapse-particular1}.
	
    In fact, choosing the function, $\varphi(r) = \sqrt{1+r^{2}}$, replacing into (\ref{M1}) we have
    \begin{equation}\label{fsolution}
     4(r^{2} +1)^{2}f''-4(r^{2}+1)f'+ (n-2)f=0.
	\end{equation}
	This is an Euler equation. With an appropriate change of variables, we can transform (\ref{fsolution}) into an ODE with real coefficients whose characteristic equation has complex roots with solution given by
	\begin{equation}\label{freal}
	   f(r) = A\sin\left(\frac{\sqrt{n-2}}{2}\log(1+r^{2})\right)
		+B\cos\left(\frac{\sqrt{n-2}}{2}\log(1+r^{2})\right);\quad\mbox{where}\quad A,\,B\in\mathbb{R}.
	\end{equation}
Making $r^{2} = \displaystyle\sum_{k}x_{k}^{2}$ we obtain (\ref{conformal-particular1}) and (\ref{lapse-particular1}).
	
	The expression for the sectional curvatures follows from a standard and straightforward computation regarding conformally flat metrics. In fact, the sectional curvature for a metric $\bar{g}=\frac{\delta_{ij}}{\varphi^{2}}$ is given by
	\begin{eqnarray}\label{curvaturesectionalconformalmetric}
	\mathcal{K}_{ij}&=&\varphi^{2}\Bigg\{(\log\varphi)_{,ii}+(\log\varphi)_{,jj}-\displaystyle\sum_{\ell \neq i,j}[(\log\varphi)_{,\ell}]^{2}\Bigg\}.
	\end{eqnarray}
	
	Since $\log(\varphi) = \dfrac{1}{2}\log \left( \displaystyle \sum_{\ell} x_{\ell}^2 + 1 \right)$, we have
	\begin{equation}
	\begin{array}{rcl}
	(\log\varphi)_{,i} &=& \dfrac{x_i}{\left(\displaystyle \sum_{\ell} x_{\ell}^2 + 1\right)}, \\
	(\log\varphi)_{,ii} &=& \dfrac{\left(\displaystyle \sum_{\ell} x_{\ell}^2 + 1-2x_i^2\right)}{\left(\displaystyle \sum_{\ell} x_{\ell}^2 + 1\right)^2}.
	\end{array} \label{ln-varphi-derivatives}
	\end{equation}
	Therefore, from (\ref{curvaturesectionalconformalmetric}) and \eqref{ln-varphi-derivatives} we obtain
	
	\begin{eqnarray*}
		0<\mathcal{K}_{ij}&=&\frac{1}{\left(\displaystyle \sum_{\ell} x_{\ell}^2 + 1\right)}\Bigg\{\displaystyle \sum_{\ell\neq i,j} x_{\ell}^2+2\Bigg\}.
	\end{eqnarray*}

	Through the introduction of polar coordinates on $(\mathbb{R}^n \setminus \left\{0 \right\}, \bar{g})$, with $\bar{g} = \dfrac{\delta_{ij}}{\varphi^2}$, we have
	\begin{eqnarray*}
		\bar{g}=(s^{2}+1)^{-1}[ds^{2}+s^{2}g_{\mathbb{S}^{n-1}}], \,\, s > 0,
	\end{eqnarray*}
	where $g_{\mathbb{S}^{n-1}}$ stands for the canonical metric on the sphere of radius $1$.
	Considering the change $s=\sinh(r)$ we get that
	\begin{eqnarray*}
		\bar{g}&=&[\cosh^{2}(r)]^{-1}[\cosh^{2}(r)dr^{2}+\sinh^{2}(r)g_{\mathbb{S}^{n-1}}]\nonumber\\
		&=&dr^{2}+\tanh^{2}(r)g_{\mathbb{S}^{n-1}}.
	\end{eqnarray*}

	Moreover, $\bar{g}=dr^{2}+\tanh^{2}(r)g_{\mathbb{S}^{n-1}}$ extends to a smooth metric through the origin (cf. Lemma A.2 in \cite{chow1}). 
\end{proof}

	\begin{example}\label{extop}
		For the space-time provide in Theorem \ref{coro1}, consider $n=3$, $A=1$ and $B=0$. Then, from the introduction of spherical coordinates, we have
		\begin{eqnarray*}
			\hat{g}=-f^{2}(r)dt^{2}
			+dr^{2}+\phi^{2}(r)g_{\mathbb{S}^{2}},
		\end{eqnarray*}
		in which
		$$f(r)=\sin\left(\log(\cosh r)\right)\quad\mbox{and}\quad\phi(r)=\tanh(r).$$ 
		Here $g_{\mathbb{S}^{2}}$ stands for the canonical metric on the sphere of radius $1$.

		Using polar coordinates and making a change of variables to $\sinh$, from \eqref{conformal-particular1} and \eqref{lapse-particular1} we have
		\begin{eqnarray*}
			\varphi(r)=\cosh(r),
		\end{eqnarray*}
		and
		\begin{eqnarray*}
			f(r)=\sin\left(\log(\cosh(r))\right).
		\end{eqnarray*}

		Considering \eqref{den}, \eqref{press} and the above equations we get
		\begin{eqnarray*}
			8\pi\mu=\frac{\cosh^{2}(r)+5}{\cosh^{2}(r)}-\Lambda
		\end{eqnarray*}
		and
		\begin{eqnarray*}
			8\pi\rho=-\left[\frac{11}{3}\tanh^{2}(r)+\frac{2}{\cosh^{2}(r)}-\frac{2}{\cosh^{2}(r)(\tan\circ\log\circ\cosh)(r)}\right]+\Lambda.
		\end{eqnarray*}

		Making the change of variable $\tilde{r}=\mathfrak{M}\cosh^{2}(r)$ (see \cite{chow2}), in which $\mathfrak{M}$ represent the mass, we have 
		\begin{eqnarray*}\label{pfbh}
		\hat{g}=-\left[(\sin\circ\log)\left(\sqrt{\frac{\tilde{r}}{\mathfrak{M}}}\right)\right]^{2}dt^{2}
		+\frac{1}{4\tilde{r}^{2}}\left(1-\frac{\mathfrak{M}}{\tilde{r}}\right)^{-1}d\tilde{r}^{2}+\left(1-\frac{\mathfrak{M}}{\tilde{r}}\right)g_{\mathbb{S}^{2}},
		\end{eqnarray*}
		where the energy density $\mu$ and the pressure $\rho$ of such space-time are, respectively, given by
		\begin{eqnarray*}\label{pqr}
		8\pi\mu=1+\frac{5\mathfrak{M}}{\tilde{r}}-\Lambda
		\end{eqnarray*}
		and
		\begin{eqnarray*}\label{pqs}
		8\pi\rho=\frac{2\mathfrak{M}}{\tilde{r}(\tan\circ\log)(\sqrt{\frac{\tilde{r}}{\mathfrak{M}}})}+\frac{5\mathfrak{M}}{3\tilde{r}}-\frac{11}{3}+\Lambda.
		\end{eqnarray*}
	\end{example}


\noindent{\bf Acknowledgement:}
The authors sincerely thank Professor João Paulo dos Santos for discussing some properties of our solution in Theorem \ref{coro11111}. The authors are grateful to Professor Ernani Ribeiro Jr. for fruitful discussions about some results present in this work.

 \

\noindent{\bf Conflict of interest:} The authors declare no conflict of interest.

\

\noindent{\bf Data Availability:} Not applicable.

\

\end{document}